\documentclass[10pt,reqno,oneside]{amsart}

% Packages

\usepackage{amsmath,amssymb,amsthm}
\usepackage{color,cite}
\frenchspacing
\usepackage{marginnote}

\usepackage{mathrsfs}
\usepackage{enumerate}
\usepackage{verbatim}

\usepackage{stmaryrd} % for energy norm and ssearrow
%\usepackage{ esint }

%\usepackage[margin=1in]{geometry}
%\usepackage[left=1in,right=1in,bottom=1in]{geometry}

% Theorem environments

\theoremstyle{plain}
\newtheorem{thm}{Theorem}[section]
\newtheorem*{thm*}{Theorem}
\newtheorem{cor}[thm]{Corollary}
\newtheorem*{cor*}{Corollary}
\newtheorem{prop}[thm]{Proposition}
\newtheorem*{prop*}{Proposition}
\newtheorem{lem}[thm]{Lemma}
\newtheorem*{lem*}{Lemma}

\newtheorem*{claim*}{Claim}

\theoremstyle{definition}
\newtheorem{defn}[thm]{Definition}
\newtheorem*{defn*}{Definition}

\theoremstyle{remark}

\newtheorem*{rmk*}{Remark}

\newtheorem*{exm*}{Example}

\numberwithin{equation}{section}

% Commonly used blackboard bold

%\newcommand{\C}{\mathbb C}

 %expectation

\newcommand{\N}{\mathbb{N}}

\newcommand{\R}{\mathbb{R}}

\newcommand{\Z}{\mathbb{Z}}
\newcommand{\bP}{\mathbb{P}}
%\newcommand{\N}{\mathbb N}
%\newcommand{\Q}{\mathbb Q}
%\newcommand{\R}{\mathbb R}
 %sphere
%\newcommand{\T}{\mathbb T}
%\newcommand{\Z}{\mathbb Z}

% Fraktur, caligraphy, script upper case; frak lower case

     \newcommand{\cC}{\mathcal C}
     \newcommand{\cD}{\mathcal D}
     
     \newcommand{\cF}{\mathcal F}

     \newcommand{\cK}{\mathcal K}

     \newcommand{\cM}{\mathcal M}

     \newcommand{\cS}{\mathcal S}

%\newcommand{\fi}{\mathfrak i}     \newcommand{\sI}{\mathscr I}

% Various matrix groups

% Greek letters

%copy from my hw

% Derivatives, miscellaneous things
\newcommand{\p}{\partial}
\newcommand{\grad}{\nabla}
\newcommand{\la}{\langle}
\newcommand{\ra}{\rangle}
\let\div\relax
\DeclareMathOperator{\div}{div}
\DeclareMathOperator{\curl}{curl}
\newcommand{\loc}{{\rm loc}}
\newcommand{\supp}{{\rm supp}}

\renewcommand{\:}{\colon}
\newcommand{\norm}[1]{\lVert #1 \rVert}

\newcommand{\wto}{\rightharpoonup} %weak convergence
\newcommand{\wstar}{\overset{\ast}{\rightharpoonup}}
\newcommand{\ind}{\mathbf{1}} %indicator, where you have to do _E
\newcommand{\inds}[1]{\mathbf{1}_{\{ #1 \}}} %indicator with set, \inds{f=2}
\newcommand{\into}{\hookrightarrow}
\newcommand{\upto}{\uparrow}
\newcommand{\dto}{\downarrow}

%%%% Stolen from Tao's old notes

%\renewcommand{\qedsymbol}{$\blacksquare$}

%\parindent 0mm
%\parskip   5mm 

%%%%%%%%%%%%%%%%%%%%%%%%%%%%%%%%%%%%%%%%%%%%%%%%%%%%%%%%%%%%%%
\title[Blow-up criteria for NSE in critical Besov spaces]{Blow-up criteria for the Navier-Stokes equations in non-endpoint critical Besov spaces}
\author{Dallas Albritton$^\ast$}
\thanks{$^\ast$University of Minnesota}
\date\today

\begin{document}

\begin{abstract}
	We obtain an improved blow-up criterion for solutions of the Navier-Stokes equations in critical Besov spaces. If a mild solution $u$ has maximal existence time $T^* < \infty$, then the non-endpoint critical Besov norms must become infinite at the blow-up time:
\begin{equation}\notag
\lim_{t \upto T^*} \norm{u(\cdot,t)}_{\dot B^{-1+3/p}_{p,q}(\R^3)} = \infty, \quad 3 < p,q < \infty.
\end{equation}
	%This criterion appears to be optimal for the method of rescaling around the singularity and applying backward uniqueness arguments.
In particular, we introduce \emph{a priori} estimates for the solution based on elementary splittings of initial data in critical Besov spaces and energy methods. These estimates allow us to rescale around a potential singularity and apply backward uniqueness arguments. The proof does not use profile decomposition. %methods. %of \cite{koch}.
\end{abstract}
\maketitle

\section{Introduction}

We are interested in blow-up criteria for solutions of the incompressible Navier-Stokes equations
\begin{equation}\tag{NSE}
%	\left\{
\begin{aligned}
	\p_t u - \Delta u + u \cdot \nabla u + \nabla p &= 0 \\
	\div u &= 0 \\
	u(\cdot,0) &= u_0
\end{aligned}
%\right.
\end{equation}
in $Q_T := \R^3 \times (0,T)$ with divergence-free initial data $u_0 \in C^\infty_0(\R^3)$. It has been known since Leray \cite{leray} that a unique smooth solution with sufficient decay at infinity exists locally in time. Furthermore, Leray proved that there exists a constant $c_p > 0$ with the property that if $T^* < \infty$ is the maximal time of existence of a smooth solution, then
\begin{equation}\label{subcrit}%\notag
\norm{u(\cdot,t)}_{L^p(\R^3)} \geq c_p \left( \frac{1}{\sqrt{T^*-t}} \right)^{1-3/p}
\end{equation}
for all $3 < p \leq \infty$.
Such a characterization exists because the Lebesgue norms in this range are subcritical with respect to the natural scaling symmetry of the Navier-Stokes equations,
\begin{equation}\label{scaling}%\notag
u(x,t) \to \lambda u(\lambda x, \lambda^2 t), \quad p(x,t) \to \lambda^2 p(\lambda x, \lambda^2 t).
\end{equation}

The behavior of the critical $L^3$
norm near a potential blow-up was unknown until the work of Escauriaza, Seregin, and {\u S}ver{\'a}k \cite{sverak03}, who discovered an endpoint local regularity criterion in the spirit of the classical work by Serrin \cite{serrin}. In particular, they demonstrated that if $u$ is a Leray-Hopf solution of the Navier-Stokes equations with maximal existence time $T^* <\infty$, then
\begin{equation}\label{l3criterion}%\notag
\limsup_{t \upto T^*} \, \norm{u(\cdot,t)}_{L^3(\R^3)} = \infty.
\end{equation}
Their proof uses the $\varepsilon$-regularity criterion of Caffarelli, Kohn, and Nirenberg \cite{CKN} in an essential way, and moreover it introduced powerful backward uniqueness arguments for studying potential singularities of solutions to the Navier-Stokes equations.
The proof is by contradiction:
If a solution forms a singularity but remains in the critical space $L^\infty_t L^3_x(Q_{T^*})$, then one may zoom in on the singularity using the scaling symmetry and obtain a weak limit. The limit solution will form a singularity but also vanish identically at the blow-up time. By backwards uniqueness, the limit solution $u$ must be identically zero in space-time, which contradicts that it forms a singularity. This method was adapted by Phuc \cite{phuc} to cover blow-up criteria in Lorentz spaces. Interestingly, backwards uniqueness techniques have also been employed in the context of harmonic map heat flow by Wang \cite{wang}. For a different proof of the criterion, see \cite{du}.

A few years ago, Seregin \cite{sereginl3} improved the blow-up criterion of Escauriaza-Seregin-{\u S}ver{\'a}k by demonstrating that the $L^3$ norm must become infinite at a potential blow-up:
 \begin{equation}\label{l3seregin}
 \lim_{t \upto T^*} \norm{u(\cdot,t)}_{L^3(\R^3)} = \infty.
 \end{equation}
 The main new difficulty in the proof is that one no longer controls the $L^\infty_t L^3_x$ norm when zooming in on a potential singularity. Seregin addressed this difficulty by relying on certain properties of the local energy solutions introduced by Lemari{\'e}-Rieusset \cite{lemarie1,kikuchi}. However, an analogous theory of local energy solutions was not known in the half space $\R^3_+:= \{ x \in \R^3 : x_3 > 0 \}$.\footnote{It appears that this theory has recently been developed in \cite{localenergynew}.} In order to overcome this obstacle, Barker and Seregin \cite{sereginhalf} introduced new \emph{a priori} estimates which depend only on the norm of the initial data in the Lorentz spaces $L^{3,q}$, $3 < q < \infty$. This is accomplished by splitting the solution as
\begin{equation}\label{introsplitting}
u = e^{t \Delta} u_0 + w,
\end{equation}
where $w$ is a correction in the energy space. The new estimates allowed Barker and Seregin to obtain an analogous blow-up criterion for Lorentz norms in the half space. Later, Seregin and {\u S}ver{\'a}k abstracted this splitting argument into the notion of a global weak $L^3$-solution \cite{sverakl3}. We direct the reader to the paper \cite{sereginweakl3} for global weak solutions with initial data in $L^{3,\infty}$.

Recently, there has been interest in adapting the ``concentration compactness + rigidity'' roadmap of Kenig and Merle \cite{kenigmerle}
to blow-up criteria for the Navier-Stokes equations. This line of thought was initiated by Kenig and G. Koch in \cite{koch3} and advanced to its current state by Gallagher, Koch, and Planchon in \cite{koch2,koch}. Gallagher et al. succeeded in extending a version of the blow-up criterion to the negative regularity critical Besov spaces $\dot B^{s_p}_{p,q}(\R^3)$, $3 < p,q < \infty$. Here, $s_p := -1+3/p$ is the critical exponent. Specifically, it is proved in \cite{koch} that if $T^* < \infty$, then
\begin{equation}\label{besovcriterion}
	\limsup_{t \upto T^*} \, \norm{u(\cdot,t)}_{\dot B^{s_p}_{p,q}(\R^3)} = \infty.
\end{equation}
This proof is also by contradiction. If there is a blow-up solution in the space $L^\infty_t (\dot B^{s_p}_{p,q})_x$, then one may prove via profile decomposition that there is a blow-up solution in the same space and with minimal norm (made possible by small-data-global-existence results \cite{kato}). This solution is known as a critical element. By essence of its minimality, the critical element vanishes identically at the blow-up time, so one may apply the backward uniqueness arguments of Escauriaza, Seregin, and {\u S}ver{\'a}k to obtain a contradiction. The main difficulty lies in proving the existence of a profile decomposition in Besov spaces, which requires some techniques from the theory of wavelets \cite{kochbesov,bahouri}. A secondary difficulty is obtaining the necessary estimates near the blow-up time in order to apply the $\varepsilon$-regularity criterion. We note that the paper \cite{koch3} appears to be the first application of Kenig and Merle's roadmap to a parabolic equation. The nonlinear profile decomposition for the Navier-Stokes equations was first proved by Gallagher in \cite{gallagherprof}. The paper \cite{chemin} contains further interesting applications of profile decomposition techniques to the Navier-Stokes equations. \\

In this paper, we obtain the following improved blow-up criterion for the Navier-Stokes equations in critical spaces:
\begin{thm}[Blow-up criterion]\label{main}
	Let $3 < p,q < \infty$ and $u_0 \in \dot B^{s_p}_{p,q}(\R^3)$ be a
	divergence-free vector field. Suppose $u$ is the mild solution of the Navier-Stokes equations on
	$\R^3 \times [0,T^*)$ with initial data $u_0$ and maximal time of
		existence $T^*(u_0)$. If $T^* < \infty$, then
		\begin{equation}\label{besovblowupcrit}
\lim_{t \upto T^*} \norm{u(\cdot,t)}_{\dot B^{s_p}_{p,q}(\R^3)} = \infty.
\end{equation}
\end{thm}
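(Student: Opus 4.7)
The plan is to argue by contradiction in the spirit of Escauriaza--Seregin--{\u S}ver{\'a}k \cite{sverak03} and Seregin \cite{sereginl3}. Suppose that $\|u(\cdot, t_n)\|_{\dot B^{s_p}_{p,q}} \leq M$ along some sequence $t_n \uparrow T^*$. I would treat $u(\cdot, t_n)$ as new initial data with uniformly bounded critical Besov norm, rescale around an emerging singularity via the scaling symmetry \eqref{scaling}, extract a weak limit $\bar u$ on $\R^3 \times [0,T)$, and then contradict the Caffarelli--Kohn--Nirenberg $\varepsilon$-regularity criterion by showing $\bar u$ both vanishes at the blow-up time and is nontrivial at a singular point; backward uniqueness of \cite{sverak03} applied to $\bar u$ closes the loop.

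The heart of the argument will be an \emph{a priori} estimate propagating Besov initial data into the solution, in the spirit of Barker--Seregin \cite{sereginhalf} but adapted to Besov spaces. The idea is to split
$$u_0 = v_0 + \tilde u_0, \qquad v_0 := (I - S_N) u_0, \quad \tilde u_0 := S_N u_0,$$
where $S_N$ denotes a Littlewood--Paley low-frequency cutoff at scale $2^N$. For $N$ large, $\|v_0\|_{\dot B^{s_p}_{p,q}}$ is small, using crucially that $q < \infty$ (so that high-frequency $\ell^q$ tails vanish), so Kato-type small-data theory produces a global mild solution $v$ with controlled scale-invariant norms. Writing $w := u - v$, the correction solves a perturbed Navier--Stokes system with drift by $v$, with $w(\cdot, 0) = \tilde u_0 \in L^2 \cap L^\infty$. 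Energy estimates for $w$ should give uniform bounds in $L^\infty_t L^2_x \cap L^2_t \dot H^1_x$ on a time interval $[0, \tau_0]$ whose length depends only on $\|u_0\|_{\dot B^{s_p}_{p,q}}$ and $N$. Combining these with standard parabolic smoothing for $v$ yields quantitative control on $u$.

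These estimates are to be applied to the rescaled sequence
$$u^{(n)}(y,\tau) := \lambda_n u(\lambda_n y + x_n, \lambda_n^2 \tau + t_n),$$
with $\lambda_n \to 0$ and $x_n$ converging to a spatial location of the singularity. Scale invariance of the Besov norm keeps $\|u_0^{(n)}\|_{\dot B^{s_p}_{p,q}}$ bounded, and I would choose a truncation level $N = N_n \to \infty$ so that the rescaled mild piece $v^{(n)}$ converges weakly to zero by virtue of its high-frequency concentration, while the energy correction $w^{(n)}$ also vanishes at $\tau = 0$ in the weak $L^2_{\loc}$ sense. Compactness then delivers a weak limit $\bar u$, which is a local energy solution of \eqref{scaling}'s associated Navier--Stokes system. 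The $\varepsilon$-regularity criterion prevents $\bar u$ from being trivial at its singular point, but the construction forces $\bar u(\cdot, 0) \equiv 0$, and then backward uniqueness together with unique continuation yields $\bar u \equiv 0$, the desired contradiction.

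The main obstacle will be the tension between the parameters. The truncation level $N_n$ must grow fast enough that $v^{(n)}$ vanishes in the weak limit, but slowly enough that the \emph{a priori} estimate on $w^{(n)}$ remains uniform in $n$, since both the small-data threshold for $v$ and the existence time $\tau_0$ for the energy estimate degrade as $N$ grows. Balanced against this is the need to choose $\lambda_n$ small enough to register the singularity through $\varepsilon$-regularity. A secondary difficulty is verifying that the weak limit $\bar u$ has enough spatial integrability and regularity at infinity to enter the ESS backward uniqueness and unique continuation framework on all of $\R^3$; this requires tracking decay of $w^{(n)}$ carefully through the rescaling. The payoff, as advertised in the abstract, is that the entire argument avoids profile decomposition and relies only on elementary splittings and energy methods.
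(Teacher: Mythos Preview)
Your proposed splitting has a genuine obstruction that the paper explicitly identifies as the central difficulty. If $v$ is the mild solution arising from small \emph{critical} Besov data $v_0 = (I - S_N)u_0$, then $v$ belongs to the critical Kato space $\cK_p(Q_T)$, so that $\norm{v(\cdot,t)}_{L^p} \leq C t^{s_p/2}$; but this just fails to place $v$ in the Serrin class $L^l_t L^p_x$ with $2/l + 3/p = 1$, since $\int_0^T t^{l s_p/2}\,dt$ diverges logarithmically at $t=0$. Consequently the drift term $\int v \otimes w : \nabla w$ in the energy inequality for $w$ cannot be absorbed, and the a~priori bound you need on $w$ does not close. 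There is also a subsidiary problem: for $p>2$ the low-frequency cutoff $S_N u_0$ of a function in $\dot B^{s_p}_{p,p}$ lies in $L^p \cap L^\infty$ but has no reason to lie in $L^2(\R^3)$, so the hypothesis $\tilde u_0 \in L^2$ on which your energy argument rests is unjustified.

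The paper resolves both issues simultaneously via a Calder\'on-type splitting (Lemma~\ref{splitdata}): one writes $u_0 = U_0 + V_0$ with $U_0 \in L^2$ and $V_0$ small in a \emph{subcritical} space $\dot B^{s_q+\varepsilon}_{q,q}$, obtained by truncating each Littlewood--Paley block in physical space rather than cutting in frequency. The mild solution $V$ with data $V_0$ then genuinely belongs to $L^l_t L^q_x$ with $2/l + 3/q = 1$, so the energy estimate for the correction $U$ closes by a Gr\"onwall argument. Because the splitting parameter depends only on the bound $M$ and on $T$, it is fixed uniformly in $n$; your anticipated tension in balancing $N_n \to \infty$ against degrading estimates simply does not arise. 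One final point: in the correct argument the limit solution vanishes at the \emph{terminal} rescaled time $t=1$ (from $\lambda_n u(\lambda_n\cdot,T^*)\to 0$, using density of $\cS$ in $\dot B^{s_p}_{p,p}$), not at the initial time $\tau=0$ as you wrote; this is precisely what makes backward uniqueness, rather than forward uniqueness, the relevant tool.
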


The local well-posedness of mild solutions, including characterizations of the maximal time of existence, are reviewed in Theorem \ref{mildexist}.

Let us discuss the novelty of Theorem \ref{main}. First, this theorem extends Seregin's $L^3$ criterion \eqref{l3seregin} to the scale of Besov spaces and replaces the $\limsup$ condition in Gallagher-Koch-Planchon's criterion \eqref{besovcriterion}. Our proof does not rely on the profile decomposition techniques in the work of Gallagher et al. \cite{koch} and may be considered to be more elementary. Rather, our methods are based on the rescaling procedure in Seregin's work \cite{sereginl3}. Regarding optimality, it is not clear whether Theorem \ref{main} is valid for the endpoint spaces $\dot B^{s_p}_{p,\infty}$ and $BMO^{-1}$, which contain non-trivial $-1$-homogeneous functions, e.g., $|x|^{-1}$. Indeed, if the blow-up profile $u(\cdot,T^*)$ is locally a scale-invariant function, then rescaling around the singularity no longer provides useful information.\footnote{Since the submission of this paper, T. Barker has proven the blow-up criterion $\lim_{t \upto T^*} \norm{u(\cdot,t)}_{\dot B^{s_p}_{p,\infty}} = \infty$, using Calder{\'o}n-type solutions, under the extra assumption that $u(\cdot,T^*)$ vanishes in the rescaling limit \cite{barkernewest}. See also the forthcoming work \cite{albrittonbarkernew} of T. Barker and the author.} It is likely that this is an essential issue and not merely an artifact of the techniques used here. For instance, one may speculate that if Type I blow-up occurs (in the sense that the solution blows up in $L^\infty$ at the self-similar rate), then the $VMO^{-1}$ norm does not blow-up at the first singular time.

%, which leads us to our second point: Theorem \ref{main} appears to be optimal for the procedure of rescaling around the singularity and applying backward uniqueness. This is because  %In this situation, our criterion is optimal in the chain of embeddings, see \eqref{chain}.

As in previous works on blow-up criteria for the Navier-Stokes equations, the main difficulty we encounter is in obtaining \emph{a priori} estimates for solutions up to the potential blow-up time. We also require that the estimates depend only on the norm of the initial data in $\dot B^{s_p}_{p,q}$. The low regularity of this space creates a new difficulty because the splitting \eqref{introsplitting} does not appear to work in the space $\dot B^{s_p}_{p,q}$ when $2/q + 3/p < 1$. One problem is that when obtaining energy estimates for the correction $w$ in \eqref{introsplitting}, the operator
\begin{equation}\label{problem}
	(U,u_0) \mapsto \int_0^T \int_{\R^3} e^{t \Delta} u_0 \cdot \nabla U \cdot U \, dx \, dt
\end{equation}
is not known to be bounded for $U \in L^\infty_t L^2_x \cap L^2_t \dot H^1_x$ and $u_0 \in \dot B^{s_p}_{p,q}$. This is because $e^{t \Delta} u_0$ ``just misses'' the critical Lebesgue space $L^r_t L^p_x$ with $2/r + 3/p = 1$. Therefore, to obtain the necessary a priori estimates, we rely on a method essentially established by C.P. Calder{\'o}n \cite{calderon}. The idea is as follows. We split the critical initial data $u_0 \in \dot B^{s_p}_{p,p}$ into supercritical and subcritical parts:
\begin{equation}\label{oursplittingintro}
	u_0 = U_0 + V_0 \in L^2 + \dot B^{s_q+\varepsilon}_{q,q}.
\end{equation}
When small, the data $V_0$ in a subcritical Besov space gives rise to a mild solution $V$ on a prescribed time interval (not necessarily a global mild solution). The supercritical data $U_0 \in L^2$ serves as initial data for a correction $U$ in the energy space. We will refer to solutions which split in this way as Calder{\'o}n solutions, see Definition \ref{calderonsol}, and we construct them in the sequel. Note that the unboundedness of \eqref{problem} is similarly problematic when proving weak-strong uniqueness in Besov spaces. In recent work on weak-strong uniqueness, Barker \cite{barkernew} has also dealt with this issue via the splitting \eqref{oursplittingintro}.
%with some differences arise from the fact that we do not take $u_0 \in L^2$. Namely, we must solve the equation for the perturbation $U$ via the classical method of Leray \cite{leray}.
We remark that Calder{\'o}n's original idea was to construct global weak solutions by splitting $L^p$ initial data for $2 < p < 3$ into small data in $L^3$ and a correction in $L^2$. This idea has also been used to prove the stability of global mild solutions \cite{gallagher,auscher}.

Let us briefly constrast the solutions we construct via \eqref{oursplittingintro} to the global weak $L^3$ solutions introduced by Seregin and {\u S}ver{\'a}k in \cite{sverakl3}, which are constructed via the splitting \eqref{introsplitting}. The correction term $w$ in \eqref{introsplitting} has zero initial data, which allows one to prove that an appropriate limit of solutions also satisfies the energy inequality up to the initial time. For this reason, the global weak $L^3$-solutions are continuous with respect to weak convergence of initial data in $L^3$. Since the splitting \eqref{oursplittingintro} requires the correction to have non-zero initial condition $U_0$, an analogous continuity result is not as obvious for Calder{\'o}n solutions. We do not seek to prove such a result here
%(see Theorem \ref{weakconv})
as to avoid burdening the paper technically, but we expect that it may be shown by adapting various ideas in \cite{sereginissue,barkernew}. Using similar ideas, we expect that one could prove that all Calder{\'o}n solutions agree with the mild solution on a short time interval.

%These two results would allow one to easily show a minimal blow-up data theorem in the spirit of \cite{rusin,jiasverak} for initial data in Besov spaces. We choose not to pursue these results in the present work to avoid burdening the paper technically. Also, the minimal blow-up data result has already been shown using the profile decomposition methods in \cite{koch,koch2}.
%don't need to mention the minimal data, necessarily

Here is the layout of the paper:
\begin{itemize}
	\item In Section 2, we prove the existence of Calder{\'o}n solutions that agree with the mild solution until the blow-up time. This is the content of Theorems \ref{existence} and \ref{mildcalderon}. We also describe the properties of weak limits of Calder{\'o}n solutions in Theorem \ref{weakconv}. The splitting arguments for initial data in Besov spaces are contained in Lemma \ref{splitdata}.
	\item In Section 3, we prove Theorem \ref{main} using the results of Section 2.
	\item Section 4 is an extensive appendix that summarizes the local well-posedness theory of mild solutions in homogeneous Besov spaces and collects well-known theorems about $\varepsilon$-regularity and backward uniqueness. We include it for the reader's convenience and to make the paper self-contained.
\end{itemize}
\noindent Notation is reviewed in the appendix. One important point is that we do not distinguish the notation of scalar-valued and vector-valued functions.

After completion of the present work, we learned that T. Barker and G. Koch \cite{barkerpersonal} obtained a different proof of the blow-up criterion \eqref{besovblowupcrit}. Their proof treats mild solutions by exploiting certain properties of the local energy solutions of Lemari{\'e}-Rieusset.

\begin{comment}
\subsection*{Notation}
The most important we emply the same notation for norms on scalar-valued functions and vector-valued functions. We will frequently use the parabolic cylinders $Q_{\delta,T} := \R^3 \times (\delta,T)$, defined when $\delta < T \leq \infty$, and $Q_T = Q_{0,T} = \R^3 \times (0,T)$. We also define the exponent $s_p = -1+3/p$, $1 \leq p \leq \infty$. The mild solutions guaranteed by Theorem \ref{mildexist} with initial data $u_0$ will be denoted $NS(u_0)$, and the corresponding maximal time of existence will be denoted $T^*(u_0)$. See the appendix for definitions of the homogeneous Besov spaces $\dot B^{s}_{p,q}(\R^3)$ and the Kato spaces $\cK^{s}_{p,q}(Q_T)$.
\end{comment}

\subsection*{Acknowledgments}

The author would like to thank his advisor, Vladim{\'i}r {\u S}ver{\'a}k, as well as Daniel Spirn, Alex Gutierrez, Laurel Ohm, and Tobias Barker for valuable suggestions. The author also thanks the reviewer for a careful reading of the manuscript.

\section{Calder{\'o}n's method}

\begin{comment}
In this section, we will discuss the notion of a Calder{\'o}n solution, which we use in the proof of Theorem \ref{main}. Recall that these solutions may be written as  $u = U + V$,
where $V$ is a mild solution of Navier-Stokes in subcritical spaces, and $U$ is a correction in the energy space. The existence of such solutions is the content of Theorem \ref{existence}. We will prove in Theorem \ref{mildcalderon} that for a given initial datum, there is a Calder{\'o}n solution which agrees with the mild solution. In Theorem \ref{weakconv}, we consider the limiting behavior of Calder{\'o}n solutions with weakly converging initial data.
\end{comment}
In this section, we present properties of the following notion of solution:
\begin{defn}[Calder{\'o}n solution]\label{calderonsol}
	Let $3 < p < \infty$ and $u_0 \in B^{s_p}_{p,p}(\R^3)$ be a divergence-free vector field. Suppose $T > 0$ is finite. We say that a distribution $u$ on $Q_T$ is a \emph{Calder{\'o}n solution} on $Q_T$ with initial data $u_0$ if the following requirements are met:
\begin{equation}
u_0 = U_0 + V_0, \quad u = U + V,
\end{equation}
where
\begin{equation}
	U_0 \in L^2(\R^3), \quad V_0 \in \dot B^{s_q+\varepsilon}_{q,q}(\R^3),
	\label{}
\end{equation}
\begin{equation}
	U \in L^\infty_t L^2_x \cap L^2_t \dot H^1_x(Q_T), \quad V \in \cK^{s_q+\varepsilon}_q(Q_T),
	\label{}
\end{equation}
\begin{equation}
	q > p, \quad 0 < \varepsilon < -s_q,
	\label{}
\end{equation}
and $V$ is the mild solution of the Navier-Stokes equations on $Q_T$ with initial data $V_0$, see Theorem \ref{subcrittheory}.
In addition, $U$ satisfies the perturbed Navier-Stokes system
	\begin{equation}\label{nselot}
	\p_t U - \Delta U + \div U \otimes U + \div U \otimes V + \div V \otimes U = - \nabla P, \quad 
	\div U = 0
\end{equation}
in the sense of distributions on $Q_T$, where
\begin{equation}
	P \in L^2_t L^{3/2}_x(Q_T) + L^2(Q_T).
	\label{}
\end{equation}
We require that $U(\cdot,t)$ is weakly continuous as an $L^2(\R^3)$-valued function on $[0,T]$ and that
$U$ attains its initial condition strongly in $L^2(\R^3)$:
\begin{equation}
	\lim_{t \dto 0} \norm{U(\cdot,t)-U_0}_{L^2(\R^3)} = 0.
\end{equation}
Define 
\begin{equation}
	Q := (-\Delta)^{-1} \div \div V \otimes V, \quad p := P + Q.
	\label{}
\end{equation}
We require that $(u,p)$ is suitable for the Navier-Stokes equations:
	\begin{equation}\label{localenergyineq}
		\p_t |u|^2 + 2|\nabla u|^2 \leq \Delta |u|^2 - \div ((|u|^2+2p)u),
\end{equation}
and that $(U,P)$ is suitable for the equation \eqref{nselot}:
	\begin{equation}\label{suitability}
		\p_t |U|^2 + 2|\nabla U|^2 \leq $$ $$ \leq \Delta |U|^2 - \div ((|U|^2 + 2P)U)
		- \div (|U|^2 V) - 2 U \div (V \otimes U).
\end{equation}
The inequalities \eqref{localenergyineq} and \eqref{suitability} are interpreted in the sense of distributions evaluated on non-negative test functions $0 \leq \varphi \in C^\infty_0(Q_T)$.
Lastly, we require that $U$ satisfies the global energy inequality,
	\begin{equation}\label{globalenergyineq}
	\int_{\R^3} |U(x,t_1)|^2 \, dx + 2 \int_{t_0}^{t_1} \int_{\R^3} |\nabla U(x,t)|^2 \, dx \, dt \leq $$ $$ \leq \int_{\R^3} |U(x,t_0)|^2 \, dx + 2 \int_{t_0}^{t_1} \int_{\R^3} V \otimes U : \nabla U dx \, dt,
	\end{equation}
for almost every $0 \leq t_0 < T$, including $t_0 = 0$, and for all $t_1 \in (t_0,T]$.
\end{defn}

%\noindent We remark that this definition is motivated by the definition of global weak-$L^3$ solutions in \cite{sverakl3}.

\subsection{Splitting arguments}

The next lemma allows us to represent critical initial data as the sum of subcritical and supercritical initial data while preserving the divergence free condition. See Proposition 2.8 in \cite{barkernew} for a detailed proof

%These results are classical and may be obtained from the characterization of Besov spaces as interpolation spaces, so
 %The space $\mathring{B}^s_{p,\infty}(\R^3)$ is defined in \eqref{mathringb} of the appendix.

\begin{lem}[Splitting of critical data]\label{splitdata}
	 Let $3 < p < q \leq \infty$ and $\theta \in (0,1)$ satisfying
 \begin{equation}
 \frac{1}{p} = \frac{\theta}{2} + \frac{1-\theta}{q}.
 \end{equation}
Define $s := s_p/(1-\theta)$. For all $\lambda > 0$ and divergence-free vector fields $u \in \dot B^{s_p}_{p,p}(\R^3)$,
	there exist divergence-free vector fields $U$, $V$ such that $u = U + V$,
	\begin{equation}\label{splitresult}
	\norm{U}_{L^2(\R^3)} \leq c\norm{u}_{\dot B^{s_p}_{p,p}(\R^3)}^{p/2} \lambda ^{1-p/2},
\end{equation}
\begin{equation}
	\norm{V}_{\dot B^{s}_{q,q}(\R^3)} \leq c \norm{u}_{\dot B^{s_p}_{p,p}(\R^3)}^{p/q} \lambda ^{1-p/q},
\end{equation}
where $c>0$ is an absolute constant.
%If $q=\infty$, then $V$ may be taken in the space $\mathring{B}^{\alpha_p}_{\infty,\infty}(\R^3)$, where
%\begin{equation}
%\alpha_p = \frac{s_p p}{p-2} = - \frac{p-3}{p-2}.
%\end{equation}
\end{lem}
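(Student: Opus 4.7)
My plan is a Calder{\'o}n-style splitting applied to a wavelet (or equivalent Littlewood--Paley) expansion of $u$. Fix a wavelet basis $\{\psi_{j,k}\}$ in which $u = \sum_{j,k} c_{j,k}\psi_{j,k}$ and each of the three norms $\|u\|_{\dot B^{s_p}_{p,p}}^p$, $\|u\|_{L^2}^2$, $\|u\|_{\dot B^s_{q,q}}^q$ is equivalent to a weighted sum over $|c_{j,k}|^p$, $|c_{j,k}|^2$, $|c_{j,k}|^q$ respectively, with explicit $j$-dependent weights dictated by the regularity index of each space. The hypotheses $\frac{1}{p} = \frac{\theta}{2} + \frac{1-\theta}{q}$ and $s = s_p/(1-\theta)$ are precisely the algebraic relations that align these three weight systems; equivalently, they encode the real-interpolation identification $\dot B^{s_p}_{p,p} = (L^2, \dot B^s_{q,q})_{\theta, p}$.

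Given $\lambda > 0$, I would partition the index set into a large-coefficient part $A$ and its complement $B$ using a $j$-dependent threshold $\mu_j = \lambda \cdot 2^{-\gamma j}$, where $\gamma \in \R$ is to be determined. Set $U' := \sum_A c_{j,k}\psi_{j,k}$ and $V' := \sum_B c_{j,k}\psi_{j,k}$, so that $u = U' + V'$. Applying the Leray projector to each piece produces divergence-free $U := \mathbb{P}U'$, $V := \mathbb{P}V'$ with $u = U + V$; since $\mathbb{P}$ is bounded on $L^2$ and on $\dot B^{s}_{r,r}$ for $1 < r < \infty$, this step does not affect the estimates.

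The norm bounds then reduce to elementary truncation inequalities. On $A$, write $|c_{j,k}|^2 = |c_{j,k}|^p \cdot |c_{j,k}|^{2-p}$ and use $|c_{j,k}|^{2-p} \leq \mu_j^{2-p}$, valid because $p > 2$; on $B$ write $|c_{j,k}|^q = |c_{j,k}|^p \cdot |c_{j,k}|^{q-p}$ and use $|c_{j,k}|^{q-p} \leq \mu_j^{q-p}$, valid because $q > p$. After summing and unpacking the space-specific weights, the $j$-factors coming from $\mu_j$ combine with the weight systems attached to $L^2$ and $\dot B^s_{q,q}$; both combinations collapse to $j$-independent constants for one and the same choice of $\gamma$, yielding the stated estimates $\|U\|_{L^2} \leq c \lambda^{1-p/2} \|u\|_{\dot B^{s_p}_{p,p}}^{p/2}$ and $\|V\|_{\dot B^s_{q,q}} \leq c \lambda^{1-p/q} \|u\|_{\dot B^{s_p}_{p,p}}^{p/q}$.

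The crux of the argument, and what I expect to be the main obstacle, is the algebraic compatibility: that a single exponent $\gamma$ simultaneously trivializes two a priori independent weight systems. This is exactly the content of the hypotheses of the lemma, and once $\gamma$ is identified the remaining truncation estimates and the Leray correction are routine.
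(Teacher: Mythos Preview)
Your proposal is correct and follows essentially the same approach as the paper: a Calder{\'o}n-type splitting with a scale-dependent threshold $\lambda_j$ (your $\mu_j = \lambda \cdot 2^{-\gamma j}$), followed by the Leray projector to restore the divergence-free condition. The paper phrases the decomposition as a pointwise truncation of each Littlewood--Paley block, $\dot\Delta_j u = (\dot\Delta_j u)\ind_{\{|\dot\Delta_j u|>\lambda_j\}} + (\dot\Delta_j u)\ind_{\{|\dot\Delta_j u|\le\lambda_j\}}$, rather than a truncation of wavelet coefficients, but the underlying algebra (your exponent $\gamma$, the paper's choice $\lambda_j = 2^{js_p p/(2-p)}\lambda$) and the truncation inequalities are the same.
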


The proof is by decomposing the Littlewood-Paley projections as
\begin{equation}
	\dot \Delta_j u = (\dot \Delta_j u) \inds{|\dot \Delta_j u| > \lambda_j} + (\dot \Delta_j u) \inds{|\dot \Delta_j u| \leq \lambda_j}, \quad j \in \Z,
	\label{}
\end{equation}
with an appropriate choice of $\lambda_j > 0$, $j \in \Z$.
The divergence-free condition is kept by applying the Leray projector to the resulting vector fields. Recall that the Leray projector is a Fourier multiplier with matrix-valued symbol homogeneous of degree zero and smooth away from the origin:
\begin{equation}
	(\bP)_{ij} := \delta_{ij} + R_i R_j, \quad R_i := \frac{\p_i}{|\nabla|}, \quad 1 \leq i,j \leq 3.
	\label{}
\end{equation}

Note that $\dot B^{s}_{q,q}(\R^3)$ is indeed a subcritical space of initial data, since
\begin{equation}
s-\frac{3}{q} = -1+ \frac{\theta}{2(1-\theta)} > -1.
\end{equation}
We will often denote $\varepsilon := s-s_q > 0$.
%In the remainder of the paper, we will often denote $\varepsilon = s - s_q$ and work with the space $\dot B^{s_q+\varepsilon}_{q,q}(\R^3)$ to emphasize that it is subcritical.

\subsection{Existence of energy solutions to NSE with lower order terms}

In this section, we will prove the existence of weak solutions to the Navier-Stokes equations with coefficients in critical Lebesgue spaces. The method of proof is well known and goes back to Leray \cite{leray}.
%See also the sections on Leray-Hopf weak solutions in \cite{sereginnotes,lemarie1} and Section 2 in \cite{sverakl3}.

\begin{prop}[Existence of energy solutions]\label{energysol}
	Let $U_0 \in L^2(\R^3)$ be a divergence-free vector field, and let
	\begin{equation}
		a,b \in L^l_t L^r_x(Q_T), \quad \frac{2}{l} + \frac{3}{r} = 1, \quad 3 < r \leq \infty
		\label{}
	\end{equation}
	be vector fields for a given $T>0$. Further assume that $\div b = 0$. Then there exist a vector field $U$ and pressure $P$,
	\begin{equation}
		U \in L^\infty_t L^2_x \cap L^2_t \dot H^1_x(Q_T), \quad P \in L^2_t L^{3/2}_x(Q_T) + L^2(Q_T),
		\label{}
	\end{equation}
	such that the perturbed Navier-Stokes system
	\begin{equation}\label{nseloworder}
	\p_t U - \Delta U + \div U \otimes U + \div U \otimes b + \div a \otimes U = - \nabla P, \quad \div U = 0
\end{equation}
is satisfied on $Q_T$ in the sense of distributions. In addition, $U(\cdot,t)$ is weakly continuous as an $L^2(\R^3)$-valued function on $[0,T]$, and
\begin{equation}
	\lim_{t \dto 0} \norm{U(\cdot,t)-U_0}_{L^2(\R^3)} = 0.
	\label{}
\end{equation}
Finally, $(U,P)$ is suitable for the equation \eqref{nseloworder}:
	\begin{equation}\label{localenergy}
	\p_t |U|^2 + 2 |\nabla U|^2 \leq $$ $$ \leq \Delta |U|^2 - \div((|U|^2 + 2P)U)
	- \div (|U|^2b) - 2 U \div (a \otimes U)
\end{equation}
as distributions evaluated on non-negative test functions $0 \leq \varphi \in C^\infty_0(Q_T)$, and
$U$ satisfies the global energy inequality
\begin{equation}
\label{globalenergy}
	\int_{\R^3} |U(x,t_2)|^2 \, dx \, dt + 2 \int_{t_1}^{t_2} \int_{\R^3} |\nabla U(x,t)|^2 \, dx \, dt \leq
	 $$ $$ \leq \int_{\R^3} |U(x,t_1)|^2 \, dx + 2\int_{t_1}^{t_2} \int_{\R^3} a \otimes U : \nabla U \, dx \, dt
\end{equation}
for almost every $t_1 \in [0,T)$, including $t_1 = 0$, and for all $t_2 \in (t_1,T]$.
\end{prop}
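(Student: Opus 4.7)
The plan is a Leray-type approximation scheme adapted to the perturbed system. First I would regularize: mollify in space-time to obtain $a^\varepsilon, b^\varepsilon \in C^\infty(\bar Q_T)$ with $\div b^\varepsilon = 0$, $a^\varepsilon \to a$ and $b^\varepsilon \to b$ in $L^l_t L^r_x(Q_T)$, and mollify $U_0$ to a smooth divergence-free $U_0^\varepsilon$. For each $\varepsilon > 0$ the regularized perturbed system admits a unique smooth solution $U^\varepsilon$ on $[0,T]$ (via Picard iteration or Galerkin; the linear lower order terms are harmless at the smooth level).

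The heart of the argument is the uniform energy estimate. Testing against $U^\varepsilon$ and exploiting $\div U^\varepsilon = \div b^\varepsilon = 0$ kills the cubic term $\int U^\varepsilon \otimes U^\varepsilon \! : \! \nabla U^\varepsilon$ and the transport term $\int b^\varepsilon \cdot \nabla U^\varepsilon \cdot U^\varepsilon$, leaving
\begin{equation}\notag
\tfrac{1}{2}\tfrac{d}{dt} \norm{U^\varepsilon}_{L^2}^2 + \norm{\nabla U^\varepsilon}_{L^2}^2 = \int_{\R^3} a^\varepsilon \otimes U^\varepsilon : \nabla U^\varepsilon \, dx.
\end{equation}
By H\"older (with $\tfrac{1}{r}+\tfrac{r-2}{2r}+\tfrac{1}{2}=1$) and Gagliardo--Nirenberg, the right-hand side is bounded by $C\norm{a^\varepsilon}_{L^r} \norm{U^\varepsilon}_{L^2}^{1-3/r} \norm{\nabla U^\varepsilon}_{L^2}^{1+3/r}$, and Young's inequality (with the scaling-balanced exponent $l = 2r/(r-3)$) absorbs half of the dissipation to give
\begin{equation}\notag
\tfrac{d}{dt}\norm{U^\varepsilon}_{L^2}^2 + \norm{\nabla U^\varepsilon}_{L^2}^2 \les \norm{a^\varepsilon}_{L^r}^{l} \norm{U^\varepsilon}_{L^2}^2.
\end{equation}
Since $\norm{a^\varepsilon}_{L^r}^{l} \in L^1_t$ uniformly in $\varepsilon$, Gr\"onwall yields a uniform bound on $U^\varepsilon$ in $L^\infty_t L^2_x \cap L^2_t \dot H^1_x(Q_T)$ in terms of $\norm{U_0}_{L^2}$ and $\norm{a}_{L^l_t L^r_x}$.

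With these bounds I would pass to the limit in the usual way: weak-$\ast$ compactness in $L^\infty_t L^2_x$, weak compactness in $L^2_t \dot H^1_x$, and strong compactness in $L^2_t L^2_{\loc}$ via Aubin--Lions (recovering $\p_t U^\varepsilon$ from the equation in a negative-regularity space, using that each lower order term lies in some $L^s_{t,x}$ uniformly). Interpolation between $L^\infty_t L^2_x$ and $L^2_t L^6_x$ then upgrades the convergence so that $U^\varepsilon \otimes U^\varepsilon$, $U^\varepsilon \otimes b^\varepsilon$, $a^\varepsilon \otimes U^\varepsilon$ all converge in $\mathcal D'(Q_T)$ to the expected limits. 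The pressure is recovered by applying $R_i R_j = \p_i\p_j(-\Delta)^{-1}$ to the three tensor fields and splitting $P = P_1 + P_2$, where $P_1 := (-\Delta)^{-1}\div\div(U\otimes U) \in L^2_t L^{3/2}_x$ by Calder\'on--Zygmund and the bound $U \otimes U \in L^2_t L^{3/2}_x$ (from interpolation), while $P_2$ is the pressure from the lower-order tensors and lies in $L^2(Q_T)$ once one observes that any $U \in L^{l'}_t L^{r'}_x$ along the Sobolev line $\tfrac{2}{l'}+\tfrac{3}{r'} = \tfrac{3}{2}$ pairs with $a, b$ via H\"older to land in $L^2_{t,x}$. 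The local energy inequalities \eqref{localenergy} follow by testing against $2\varphi U^\varepsilon$ at the regularized level (where they are identities) and using weak lower semicontinuity of the dissipation; the global inequality \eqref{globalenergy} is analogous. Strong $L^2$ attainment of the initial datum follows from weak continuity combined with $\limsup_{t \dto 0}\norm{U(t)}_{L^2} \leq \norm{U_0}_{L^2}$ derived from the energy inequality.

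The main obstacle I anticipate is not the existence itself but the bookkeeping for the pressure decomposition $P \in L^2_t L^{3/2}_x + L^2(Q_T)$: one must check that the correct piece of the pressure is assigned to the correct summand using the precise scaling relation $\tfrac{2}{l}+\tfrac{3}{r}=1$, and verify this remains valid at both the near-endpoint $r \dto 3$ (where $l \uparrow \infty$) and the endpoint $r = \infty$ (where $l = 2$). A secondary, more technical point is confirming that the passage to the limit in the distributional suitability \eqref{localenergy} does not require strong convergence of $\nabla U^\varepsilon$, which is handled because the only place $\nabla U$ appears quadratically on the right is through the dissipation term, which enjoys lower semicontinuity.
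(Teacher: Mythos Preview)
Your overall strategy matches the paper's: approximate, obtain uniform energy bounds, pass to the limit via Aubin--Lions, recover the pressure by Calder\'on--Zygmund with the splitting $P=P_1+P_2$, and obtain the local energy inequality by lower semicontinuity of the dissipation. The energy estimate you wrote and the pressure bookkeeping are correct.

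There is, however, a genuine gap at the approximation step. You mollify $a,b,U_0$ but leave the nonlinear term $\div U\otimes U$ untouched, then assert that ``the regularized perturbed system admits a unique smooth solution $U^\varepsilon$ on $[0,T]$.'' That system is still the full three-dimensional Navier--Stokes equations (with a smooth lower-order perturbation), and global smooth existence is precisely what is not known; Picard iteration only gives local solutions. If instead you fall back on Galerkin, global existence of approximations is fine, but then the local energy identity ``testing against $2\varphi U^\varepsilon$'' is no longer available: you may only test against projections onto the Galerkin space, and the cubic and pressure terms do not localize cleanly. This is exactly why proving \emph{suitability} (your inequality \eqref{localenergy}) is delicate and why the classical Leray--Hopf construction via Galerkin does not automatically produce suitable weak solutions.

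The paper resolves this by mollifying the \emph{nonlinear term} only, replacing $\div U\otimes U$ by $\div U\otimes (U)_\rho$, while keeping $a,b$ in their original space $L^l_tL^r_x$. The mollified bilinearity $B_\rho$ is then bounded on the energy space with a factor of $T^{1/2}$, so a contraction argument (paper's Lemma~\ref{stokes}) gives global-in-time solutions $U^\rho$ in the energy class, and these solutions satisfy an exact mollified local energy \emph{equality}. Passing $\rho\downarrow 0$ then gives the local energy inequality via the defect measure $\mu=\lim|\nabla U^{\rho_k}|^2-|\nabla U|^2\geq 0$, just as you describe. The fix to your argument is thus simple and specific: mollify the quadratic nonlinearity rather than the coefficients, and everything else you wrote goes through essentially as in the paper. (Mollifying $a,b,U_0$ in addition is harmless but unnecessary.)
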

In the statement above, $U \div (a \otimes U)$ is the distribution
\begin{equation}
	\la U \div (a \otimes U), \varphi \ra := - \int_0^T \int_{\R^3} a \otimes U : (U \otimes \nabla \varphi + \varphi \nabla U) \, dx \, dt,
\end{equation}
for all $\varphi \in C^\infty_0(Q_T)$.

Let us introduce some notation and basic estimates surrounding the energy space. For $0 \leq t_0 < T \leq \infty$, we define $Q_{t_0,T} := \R^3 \times (t_0,T)$.
If $U \in L^\infty_t L^2_x \cap L^2_t \dot H^1_x(Q_{t_0,T})$, we define the \emph{energy norm},
\begin{equation}
	|U|_{2,Q_{t_0,T}}^2 := \sup_{t_0<t<T} \int_{\R^3} |U(x,t)|^2 \, dx + 2 \int_{t_0}^T \int_{\R^3} |\nabla U(x,t')|^2 \,dx \, dt'.
\end{equation}
For simplicity, take $t_0 = 0$.
By interpolation between $L^\infty_t L^2_x(Q_{T})$ and $L^2_t L^6_x(Q_{T})$, one obtains
\begin{equation}
	\norm{U}_{L^m_t L^n_x(Q_{T})} \leq c |U|_{2,Q_{T}},
	\label{}
\end{equation}
\begin{equation}
	\frac{2}{m} + \frac{3}{n} = \frac{3}{2}, \quad 2 \leq m \leq \infty, \quad 2 \leq n \leq 6.
	\label{}
\end{equation}
For instance, it is common to take $m=n=10/3$.
Hence, if $a \in L^l_t L^r_x(Q_{T})$ is a vector field as in Proposition \ref{energysol}, we have
\begin{equation}\label{energyinterp1}
	\int_{0}^T |U|^2 |a|^2 \, dx \, dt \leq c \norm{a}_{L^l_t L^r_x(Q_{T})}^2 |U|^2_{2,Q_{T}},
\end{equation}
\begin{equation}\label{energyinterp2}
	\int_{0}^T |U|^2 |a| \, dx \,dt \leq c T^{\frac{1}{2}} \norm{a}_{L^l_t L^r_x(Q_{T})} |U|^2_{2,Q_{T}}.
\end{equation}

Let us now recall the tools used to estimate the time derivative $\p_t U$ of a solution $U$ that belongs to the energy space. To start, we will need to estimate the pointwise product of $u \in L^2(\R^3)$ and $v \in \dot H^1(\R^3)$:
\begin{equation}\label{energyinterp3}
	\norm{uv}_{\dot H^{-1/2}(\R^3)} \leq c \norm{u}_{L^2(\R^3)} \norm{v}_{\dot H^1(\R^3)},
\end{equation}
see Corollary 2.55 in \cite{bahourichemin}. For instance, it follows that
\begin{equation}
	\norm{\div U \otimes U}_{L^2_t \dot H^{-3/2}_x(Q_T)} \leq c |U|_{2,Q_T}^2.
	\label{}
\end{equation}
The time derivative $\p_t U$ is typically only in $L^2_t \dot H^{-3/2}_x(Q_T)$ unless the nonlinear term is mollified. Notice also that $\dot H^{-3/2}(\R^3) \into H^{-3/2}(\R^3)$.
%Now, by a more precise interpolation of the energy space, one has that
%\begin{equation}
%	L^\infty_t L^2_x \cap L^2_t \dot H^1_x(Q_T) \into L^{\frac{10}{3}}_t H^{\frac{3}{5}}_x(Q_T).
%	\label{}
%\end{equation}
%For each $R > 0$, observe the embeddings
%\begin{equation}
%	H^{\frac{3}{5}}(B(R)) \overset{\text{cpt}}\into L^3(B(R)), \quad H^{-\frac{3}{2}}(\R^3) \into W^{-2,3}(\R^3).
%	\label{}
%\end{equation}
Suppose $(U^{(n)})_{n \in \N}$ is a sequence of vector fields on $Q_T$ such that
\begin{equation}
	\sup_{n \in \N} |U^{(n)}|_{2,Q_T} + \norm{\p_t U}_{L^2_t H^{-3/2}_x(Q_T)} < \infty.
	\label{}
\end{equation}
According to the Aubin-Lions lemma (see Chapter 5, Proposition 1.1 in \cite{sereginnotes}), there exists a subsequence, still denoted by $U^{(n)}$, such that
\begin{equation}
	U^{(n)} \to U \text{ in } L^2(B(R) \times (0,T)), \quad R > 0.
	\label{}
\end{equation}
Since $\sup_{n \in \N} \norm{U^{(n)}}_{L^{10/3}(Q_T)} < \infty$, the subsequence actually converges strongly in $L^3(B(R) \times (0,T))$.
We will use this fact frequently in the sequel.

For the remainder of the paper, let us fix a non-negative radially symmetric test function $0 \leq \theta \in C^\infty_0(\R^3)$ with $\int_{\R^3} \theta \, dx = 1$. For a locally integrable function $f \in L^1_\loc(\R^3)$ we make the following definition:
\begin{equation}
	\theta_\rho := \frac{1}{\rho^{3}} \theta\big(\frac{\cdot}{\rho}\big), \quad (f)_\rho := f \ast \theta_\rho, \quad \rho > 0.
	\label{}
\end{equation}
The proof of the next lemma is well known, and we include it for completeness.
\begin{lem}[Solution to mollified Navier-Stokes equation with lower order terms]\label{stokes}
	Let $\rho > 0$. Assume the hypotheses of Proposition \ref{energysol}. Then there exists a unique $U$ in the class
	\begin{equation}
		U \in C([0,T];L^2(\R^3)) \cap L^2_t \dot H^1_x(Q_T), \quad \p_t U \in L^2_t H^{-1}_x(Q_T),
	\end{equation}
satisfying the mollified perturbed Navier-Stokes system in $L^2_t H^{-1}_x(Q_T)$:
\begin{equation}\label{eq:lemperturbedsys}
		\p_t U - \Delta U + \bP \div (U \otimes (U)_\rho) + \bP \div(U \otimes b + a \otimes U) = 0, \quad \div U = 0,
\end{equation}
and such that $U(\cdot,0) = U_0$ in $L^2(\R^3)$.
%If $\alpha,\beta \in C^\infty((0,T);W^{k,\infty}(\R^3))$ for all $k \geq 0$, then $u \in C^\infty((0,T);H^k(\R^3))$ for all $k \geq 0$.
\end{lem}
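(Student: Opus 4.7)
The plan is a standard Galerkin approximation combined with an energy estimate that absorbs the worst contribution of $a$ via the critical scaling $2/l + 3/r = 1$; once the a priori bound is in hand, Aubin-Lions compactness and a Grönwall argument for uniqueness finish the proof.

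First I would fix a Galerkin scheme---for instance, let $P_n$ be the orthogonal projection onto the span of the first $n$ elements of a countable orthonormal basis of $L^2(\R^3)$ consisting of divergence-free Schwartz fields---and consider
\begin{equation}
\p_t U^n - \Delta U^n + P_n \bP \div\!\left( U^n \otimes (U^n)_\rho + U^n \otimes b + a \otimes U^n \right) = 0, \quad U^n(0) = P_n U_0.
\end{equation}
Because $\norm{(W)_\rho}_{L^\infty} \leq C(\rho) \norm{W}_{L^2}$, the right-hand side is locally Lipschitz on the finite-dimensional space $P_n L^2(\R^3)$, so Cauchy-Lipschitz gives a local-in-time solution.

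Second, I would close a uniform-in-$n$ energy estimate. Pairing with $U^n$: using $\div (U^n)_\rho = 0$ (since $\theta_\rho$ commutes with derivatives and $\div U^n = 0$) and $\div b = 0$, both the mollified cubic term and the $b$-term integrate to zero. For the $a$-contribution, Gagliardo-Nirenberg gives
\begin{equation}
\norm{U^n}_{L^{2r/(r-2)}(\R^3)} \leq c \norm{U^n}_{L^2(\R^3)}^{1-3/r} \norm{\nabla U^n}_{L^2(\R^3)}^{3/r},
\end{equation}
whence by Hölder
\begin{equation}
\left| \int_{\R^3} a \otimes U^n : \nabla U^n \, dx \right| \leq c \norm{a}_{L^r(\R^3)} \norm{U^n}_{L^2(\R^3)}^{1-3/r} \norm{\nabla U^n}_{L^2(\R^3)}^{1+3/r}.
\end{equation}
Since $1 + 3/r = 2 - 2/l$, Young's inequality with exponents $l/(l-1)$ and $l$ absorbs a copy of $\norm{\nabla U^n}_{L^2}^2$ into the viscous dissipation and leaves a remainder bounded by $c \norm{a}_{L^r}^l \norm{U^n}_{L^2}^2$. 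Grönwall's lemma then produces a bound on $|U^n|_{2,Q_T}$ depending only on $\norm{U_0}_{L^2}$, $\norm{a}_{L^l_t L^r_x(Q_T)}$, and $T$, and permits continuation of the Galerkin ODE to the full interval $[0,T]$.

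Third, the energy bound plus direct estimation of each term in the equation---using $\norm{(U^n)_\rho}_{L^\infty} \leq C(\rho) \norm{U^n}_{L^2}$ for the cubic piece and the interpolation inequalities \eqref{energyinterp1}--\eqref{energyinterp2} for the linear lower-order pieces---yields $\p_t U^n$ bounded in $L^2_t H^{-1}_x(Q_T)$ uniformly in $n$. The Aubin-Lions lemma then produces a subsequence converging to a limit $U$ strongly in $L^2(B(R) \times (0,T))$ for every $R>0$, weakly-$\ast$ in $L^\infty_t L^2_x$, and weakly in $L^2_t \dot H^1_x$. Passing to the limit in the weak formulation is routine: the local strong convergence handles the nonlinear term (noting that $(U^n)_\rho \to (U)_\rho$ locally uniformly in space), and the linear lower-order terms pass via weak convergence tested against $C^\infty_0$ fields.

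For uniqueness I would test the linear equation for $W = U_1 - U_2$ against $W$. The nonlinear difference contributes $\int W \otimes (U_1)_\rho : \nabla W + \int U_2 \otimes (W)_\rho : \nabla W$, each piece bounded via $\norm{(\cdot)_\rho}_{L^\infty} \leq C(\rho) \norm{\cdot}_{L^2}$ and Young's inequality by $\epsilon \norm{\nabla W}_{L^2}^2 + C(\rho, \norm{U_i}_{L^2}) \norm{W}_{L^2}^2$; the $a$-term is absorbed as in the a priori estimate. Grönwall's lemma with $W(0) = 0$ yields $W \equiv 0$. Finally, the continuity $U \in C([0,T]; L^2(\R^3))$ and attainment of the initial datum follow from the standard embedding $L^2_t \dot H^1_x \cap H^1_t H^{-1}_x \hookrightarrow C_t L^2_x$.

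The main obstacle is the energy estimate for the $a$-term: it closes only because the critical scaling $2/l + 3/r = 1$ matches exactly the Young exponent needed to absorb $\norm{\nabla U^n}_{L^2}^{1+3/r}$ into the viscous term. Once this is in place, the remaining steps follow the familiar Leray template.
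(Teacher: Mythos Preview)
Your argument is correct, but it follows a genuinely different route from the paper's. The paper does not use Galerkin approximation at all: it recasts \eqref{eq:lemperturbedsys} as the integral equation
\[
U(\cdot,t) = e^{t\Delta} U_0 - B_\rho(U,U)(\cdot,t) - L(U)(\cdot,t),
\]
with $B_\rho(v,w) = \int_0^t e^{(t-s)\Delta}\bP\div(v\otimes(w)_\rho)\,ds$ and $L(w) = \int_0^t e^{(t-s)\Delta}\bP\div(a\otimes w + w\otimes b)\,ds$, and applies the abstract Picard Lemma~\ref{abstract} directly in the energy space. The bilinear bound $|B_\rho(v,w)|_{2,Q_T}^2 \leq c(\rho)\,T\,|v|_{2,Q_T}^2|w|_{2,Q_T}^2$ and the linear bound $|L(w)|_{2,Q_T} \leq c\norm{a}_{L^l_tL^r_x(Q_T)}|w|_{2,Q_T}$ give a contraction for small $T$; the solution is then continued by an iterated short-time energy bound (take $t_2$ close to $t_1$ so that $c\norm{a}_{L^l_tL^r_x(Q_{t_1,t_2})}\leq 1/2$, absorb, repeat finitely many times) rather than by your Young-plus-Gr\"onwall closure. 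Uniqueness comes for free from the fixed-point construction.

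Your Galerkin/Gr\"onwall route is the classical Leray template and is arguably more self-contained; your observation that $1+3/r = 2-2/l$ matches the Young exponent is exactly the point. The paper's fixed-point route, on the other hand, is chosen for consistency with the mild-solution framework used elsewhere (notably Theorem~\ref{mildcalderon}, where the same $B_\rho$ and $L$ are estimated simultaneously in Kato and energy norms), and it delivers uniqueness without a separate difference estimate. One minor caveat on your side: with an arbitrary $L^2$-orthonormal Schwartz basis, uniform boundedness of $P_n$ on $H^{-1}$ (needed for the $\p_t U^n$ bound) is not automatic; on $\R^3$ the clean fix is to take $P_n$ to be Fourier truncation to $\{|\xi|\leq n\}$, which commutes with derivatives.
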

\begin{proof}
	Assume the hypotheses of the lemma and denote $T_\sharp := T$ in the statement, in order to reuse the variable $T$. We will consider the bilinear operator
	\begin{equation}
		B_\rho(v,w)(\cdot,t) := \int_0^t e^{(t-s)\Delta} \bP \div v \otimes (w)_\rho \, ds,
	\end{equation}
	defined formally for all vector fields $v$, $w$ on spacetime, as well as the linear operator
	\begin{equation}
		L(w)(\cdot,t) := \int_0^t e^{(t-s)\Delta} \bP \div (a \otimes w + w \otimes b) \,ds.
		\label{}
	\end{equation}
	For instance, by classical energy estimates for the Stokes equations, the operators are well defined whenever $v \otimes (w)_\rho$, $a \otimes w$, $w \otimes b$ are square integrable. Specifically, due to \eqref{energyinterp1} and the energy estimates, we have that $B$ and $L$ are bounded operators on the energy space, and
	\begin{equation}
		|B(v,w)|_{2,Q_T}^2 \leq c \norm{v \otimes (w)_\rho}_{L^2(Q_T)}^2 \leq c(\rho) T |v|_{2,Q_T}^2 |w|_{2,Q_T}^2, %referee says T^{\frac{1}{2}}?
		\label{}
	\end{equation}
	\begin{equation}
		|L(w)|_{2,Q_T}^2 \leq c \norm{a \otimes w}_{L^2(Q_T)}^2 \leq \norm{a}_{L^l_t L^r_x(Q_T)}^2 |w|^2_{2,Q_T},
		\label{}
	\end{equation}
	for all $T>0$. Notice that $b$ does not enter the estimate, since $\div b = 0$. In addition, the operators take values in $C([0,T];L^2(\R^3)$. Moreover, notice that $\norm{a}_{L^l_t L^r_x(Q_T)} \ll 1$ when $0 < T \ll 1$. Hence, one may apply Lemma \ref{abstract} to solve the integral equation
	\begin{equation}\label{integraleqnenergy}
		U(\cdot,t) = e^{t\Delta} U_0 - B_\rho(U,U)(\cdot,t) - L(U)(\cdot,t)
	\end{equation}
	up to time $0 < T \ll 1$. The integral equation \eqref{integraleqnenergy} is equivalent to the differential equation \eqref{eq:lemperturbedsys}. The solution may be continued in the energy space up to the time $T_\sharp$ by the same method as long as the energy norm remains bounded. This will be the case, since a solution $U$ on $Q_T$ obeys the energy equality
	\begin{equation}
		\int_{\R^3} |U(x,t_2)|^2 \, dx + 2 \int_{t_1}^{t_2} \int_{\R^3} |\nabla U|^2 \, dx \, dt = $$ $$ = \int_{\R^3} |U(x,t_1)|^2 \, dx + 2 \int_{t_1}^{t_2} \int_{\R^3} a \otimes U : \nabla U \, dx \, dt \leq $$ $$
		\leq \int_{\R^3} |U(x,t_1)|^2 \, dx + A(t_1,t_2) |U|^2_{2,Q_{t_1,t_2}}
	\end{equation}
	for all $0 \leq t_1 < t_2 \leq T$, where $A(t_1,t_2) := c \norm{a}_{L^l_t L^r_x(Q_{t_1,t_2})}$. Then one simply takes $t_2$ close enough to $t_1$ such that $A(t_1,t_2) \leq 1/2$ to obtain a local-in-time \emph{a priori} energy bound. By repeating the argument a finite number of times, one obtains
	\begin{equation}
		|U|_{2,Q_T} \leq C(\norm{U_0}_{L^2(\R^3)},\norm{a}_{L^l_t L^r_x(Q_{T_\sharp})}).
		\label{}
	\end{equation}
	Hence, the solution may be continued to the time $T_\sharp$.
	Finally, uniqueness follows from the construction. This completes the proof.
\end{proof}

\begin{proof}[Proof of Proposition \ref{energysol}]
	We follow the standard procedure initiated by Leray \cite{leray} of solving the mollified problem and taking the limit as $\rho \dto 0$. The arguments we present here are essentially adapted from \cite{sverakl3} and \cite{sereginnotes}, Chapter 5, so we will merely summarize them. %although we use Riesz transforms to estimate the pressure instead of the Solonnikov coercive estimates.
	%Following \cite{lemarie1}, we will prove that the limit solution is suitable and then derive a global energy inequality from the local energy inequality.

	1. \textit{Limits}. 
	For each $\rho > 0$, we denote by $U^\rho$ the unique solution from Lemma \ref{stokes}. From the proof of Lemma \ref{stokes}, recall the energy equality
	\begin{equation}\label{energyeq}
		\int_{\R^3} |U^\rho(x,t_2)|^2 \, dx + 2 \int_{t_1}^{t_2} \int_{\R^3} |\nabla U^\rho|^2 \, dx \, dt = $$ $$
		= \int_{\R^3} |U^\rho(x,t_1)|^2 \, dx + 2 \int_{t_1}^{t_2}  \int_{\R^3} a \otimes U^\rho : \nabla U^\rho \, dx \, dt
\end{equation}
for all $0 \leq t_1 < t_2 \leq T$, as well as the \emph{a priori} energy bound
\begin{equation}\label{eq:apriorienergy}
|U^\rho|_{2,Q_T} \leq C(\norm{U_0}_{L^2(\R^3)},\norm{a}_{L^l_t L^r_x(Q_T)}).
\end{equation}
In order to estimate the time derivative $\p_t U \in L^2_t H^{-3/2}_x(Q_T)$, we rewrite the equation \eqref{eq:lemperturbedsys} as
\begin{equation}
	\p_t U = \Delta U - \bP \div (U^\rho \otimes (U^\rho)_\rho) - \bP \div (a \otimes U^\rho + U^\rho \otimes b).
	\label{}
\end{equation}
Then, due to the estimate \eqref{energyinterp1}, one obtains
\begin{equation}
	\norm{\Delta U - \bP \div (a \otimes U^\rho + U^\rho \otimes b)}_{L^2_t H^{-1}_x(Q_T)} \leq $$ $$\leq C(\norm{U_0}_{L^2(\R^3)},\norm{a}_{L^l_t L^r_x(Q_T)},\norm{b}_{L^l_t L^r_x(Q_T)}),
\end{equation}
and in light of \eqref{energyinterp3}, we also have
\begin{equation}
	\norm{\bP \div (U^\rho \otimes (U^\rho)_\rho)}_{L^2_t H^{-3/2}_x(Q_T)} \leq c \norm{(U^\rho \otimes (U^\rho)_\rho)}_{L^2_t H^{-1/2}_x(Q_T)} \leq $$ $$ \leq C(\norm{U_0}_{L^2(\R^3)},\norm{a}_{L^l_t L^r_x(Q_T)}).
\end{equation}
The resulting estimate on the time derivative is
\begin{equation}\label{eq:aprioritimederiv}
	\norm{\p_t U^\rho}_{L^2_t H^{-3/2}_x(Q_T)} \leq C(\norm{U_0}_{L^2(\R^3)},\norm{a}_{L^l_t L^r_x(Q_T)},\norm{b}_{L^l_t L^r_x(Q_T)}).
\end{equation}
By the Banach-Alaoglu theorem and \eqref{eq:apriorienergy}, there exist $U \in L^\infty_t L^2_x \cap L^2_t \dot H^1_x(Q_T)$ and a sequence $\rho_k \dto 0$ such that
	\begin{equation}
		U^{\rho_k} \wstar U \quad \text{ in } L^\infty_t L^2_x(Q_T), \qquad
		\nabla U^{\rho_k} \wto \nabla U \quad \text{ in } L^2(Q_T).
\end{equation}
In addition, by \eqref{eq:aprioritimederiv} and the discussion preceding Lemma \ref{stokes}, the subsequence may be chosen such that
\begin{equation}
U^{\rho_k} \to U \quad \text{ in } L^3_t (L^3_\loc)_x(Q_T).
\end{equation}
Moreover, the estimate \eqref{eq:aprioritimederiv} allows us to prove that
\begin{equation}\label{Ufnalconv}
	\int_{\R^3} U^{\rho_k}(x,\cdot) \cdot \varphi(x) \, dx  \to  \int_{\R^3} U(x,\cdot) \cdot \varphi(x) \, dx  \text{ in } C([0,T]),
\end{equation}
 for all vector fields $\varphi \in L^2(\R^3)$. In particular, 
	\begin{equation}
		U^{\rho_k}(\cdot,t) \wto U(\cdot,t) \text{ in } L^2(\R^3), \quad t \in [0,T],
	\end{equation}
	and the limit $U(\cdot,t)$ is weakly continuous as an $L^2(\R^3)$-valued function on $[0,T]$. Here is our argument for \eqref{Ufnalconv}, based on Chapter 5, p. 102 of \cite{sereginnotes}. For each vector field $\varphi \in C^\infty_0(\R^3)$, consider the family
\begin{equation}
	\cF_\varphi := \big\lbrace \int_{\R^3} U^\rho(x,\cdot) \cdot \varphi(x) \, dx : \rho > 0 \big\rbrace \subset C([0,T]).
	\label{}
\end{equation}
The family $\cF_\varphi$ is uniformly bounded, since
	\begin{equation}
		\lvert \int U^\rho(x,t) \cdot \varphi(x) \, dx \rvert \leq \norm{U^\rho}_{L^\infty_t L^2_x(Q_T)} \norm{\varphi}_{L^2(\R^3)} \leq $$ $$
		\leq C(\norm{U_0}_{L^2(\R^3)},\norm{a}_{L^l_t L^r_x(Q_T)}) \norm{\varphi}_{L^2(\R^3)},
	\end{equation}
	for all $0 \leq t \leq T$. It is also equicontinuous:
	\begin{equation}
		\lvert \int U^\rho(x,t_2) \cdot \varphi(x) - U^\rho(x,t_1) \cdot \varphi(x) \, dx \rvert \leq $$ $$
	\leq |t_2-t_1|^{\frac{1}{2}} \norm{\p_t U^\rho}_{L^2_t H^{-3/2}_x(Q_T)} \norm{\varphi}_{H^{3/2}(\R^3)} \leq $$ $$
	\leq |t_2-t_1|^{\frac{1}{2}} C(\norm{U_0}_{L^2(\R^3)},\norm{a}_{L^l_t L^r_x(Q_T)},\norm{b}_{L^l_t L^r_x(Q_T)}) \norm{\varphi}_{H^{3/2}(\R^3)},
	\end{equation}
	for all $0 \leq t_1,t_2 \leq T$. Hence, we may extract a further subsequence, still denoted by $\rho_k$, such that \eqref{Ufnalconv}. By a diagonalization argument and the density of test functions in $L^2(\R^3)$, we can obtain \eqref{Ufnalconv} for all vector fields $\varphi \in L^2(\R^3)$. This completes the summary of the convergence properties of $U^{\rho_k}$ as $\rho_k \dto 0$.
	%(A different way to prove that $U(\cdot,t)$ is weakly continuous as an $L^2(\R^3)$-valued function on $[0,T]$ is to directly estimate $\p_t U$ by the other terms in the perturbed Navier-Stokes system.)

Let us now analyze the behavior of $U$ near the initial time. In the limit $\rho_k \dto 0$, the energy equality \eqref{energyeq} gives rise to an energy \emph{inequality}:
\begin{equation}\label{eq:energyineq}
		\int_{\R^3} |U(x,t)|^2 \, dx + 2 \int_0^t \int_{\R^3} |\nabla U|^2 \, dx \, dt' \leq $$ $$
		\leq \int_{\R^3} |U_0(x)|^2 \, dx + C(\norm{U_0}_{L^2(\R^3)},\norm{a}_{L^l_t L^r_x(Q_T)}) \norm{a}_{L^l_t L^r_x(Q_t)},
\end{equation}
for almost every $t \in (0,T)$.
This is due to the lower semicontinuity of the energy norm with respect to weak-star convergence.
	 Since $U(\cdot,t)$ is weakly continuous as an $L^2(\R^3)$-valued function, the energy inequality may be extended to all $t \in (0,T]$. Finally, since $U(\cdot,0) = U_0$ and $\limsup_{t \dto 0} \norm{U(\cdot,t)}_{L^2(\R^3)} \leq \norm{U_0}_{L^2(\R^3)}$ (by taking $t \dto 0$ in \eqref{eq:energyineq}), we obtain
	\begin{equation}
		\lim_{t \dto 0} \norm{U(\cdot,t) - U_0}_{L^2(\R^3)} = 0.
		\label{strongconvergencenearzero}
	\end{equation}
	We have proven the desired properties of $U$ except for suitability and the global energy inequality.

	Let us now take the limit of the pressures. The pressure $P^\rho$ associated to $U^\rho$ in the equation \eqref{eq:lemperturbedsys} may be calculated as $P^\rho := P_1^\rho + P_2^\rho$,
	\begin{equation}\label{p1def}
		P_1^\rho := (-\Delta)^{-1} \div \div (U^\rho \otimes (U^\rho)_\rho)
	\end{equation}
	\begin{equation}\label{p2def}
		P_2^\rho := (-\Delta)^{-1} \div \div (U^\rho \otimes b + a \otimes U^\rho).
\end{equation}
	%We remark that $p_3$ contains the part of the pressure from the Picard iterate $P_k$ as well.\footnote{Try not to have to mollify $F_k$.}
By the Calder{\'o}n-Zygmund estimates, we have the following bounds independent of the parameter $\rho > 0$:
	\begin{equation}
		\norm{P_1^\rho}_{L^2_t L^{3/2}_x(Q_T)} \leq c \norm{U^\rho \otimes (U^\rho)_\rho}_{L^2_t L^{3/2}_x(Q_T)} \leq $$ $$
		\leq c\norm{U^\rho}_{L^\infty_t L^2_x(Q_T)} \norm{U^\rho}_{L^2_t L^6_x(Q_T)} \leq $$ $$ 
		\leq C(\norm{U_0}_{L^2(\R^3)},\norm{a}_{L^l_t L^r_x(Q_T)}),
\end{equation}
and similarly,
	\begin{equation}
		\norm{P_2^\rho}_{L^2(Q_T)} \leq c \norm{a \otimes U^\rho + U^\rho \otimes b}_{L^2(Q_T)} \leq $$ $$
		\leq c |U^\rho|_{2,Q_T} (\norm{a}_{L^l_t L^r_x(Q_T)} + \norm{b}_{L^l_t L^r_x(Q_T)}) \leq $$ $$
		\leq C(\norm{U_0}_{L^2(\R^3)},\norm{a}_{L^l_t L^r_x(Q_T)},\norm{b}_{L^l_t L^r_x(Q_T)}).
\end{equation}
	In particular, we may pass to a further subsequence, still denoted by $\rho_k$, such that
	\begin{equation}
		P_1^{\rho_k} \wto P_1 \quad \text{ in } L^2_t L^{3/2}_x(Q_T), \quad
		P_2^{\rho_k} \wto P_2 \quad \text{ in } L^2(Q_T),
\end{equation}
	\begin{equation}
		P^{\rho_k} \wto P := P_1 + P_2 \quad \text{ in } L^2_t (L^{3/2}_\loc)_x(Q_T).
	\end{equation}
	Recall that $U^{\rho_k} \to U$ in $L^{3}_t (L^3_\loc)_x$. Utilizing this fact in \eqref{p1def} and \eqref{p2def}, we observe that
	\begin{equation}
		-\Delta P_1 = \div \div U \otimes U, \quad -\Delta P_2 = \div \div (a \otimes U + U \otimes b),
		\label{}
	\end{equation}
	in the sense of distributions on $Q_T$. Finally, the Liouville theorem for entire harmonic functions in Lebesgue spaces implies
	\begin{equation}
		P_1 = (-\Delta)^{-1} \div \div U \otimes U, \quad P_2 = (-\Delta)^{-1} \div \div (a \otimes U + U \otimes b).
	\label{}
\end{equation}
%(Indeed, the difference between $P$ and the associated pressure must be harmonic, but a harmonic function in a Lebesgue space must be zero.)
	%where $p$ is the associated pressure of $v$.  We also see that $q$.
Due to the limit behavior discussed above, it is clear that $(U,P)$ solves the system \eqref{nseloworder} in the sense of distributions.

	2. \textit{Suitability}.
	We will now prove that $U$ is suitable for the equation \eqref{nseloworder}. Specifically, we will verify the local energy inequality \eqref{localenergy} following arguments in Lemari{\'e}-Rieusset \cite{lemarie1} (see p. 318).
	We start from the mollified local energy equality
	\begin{equation}\label{suitableeq}
		\p_t |U^{\rho}|^2 + 2|\nabla U^{\rho}|^2 = $$ $$= \Delta |U^{\rho}|^2 - \div (|U^{\rho}|^2(U^{\rho})_{\rho} + 2 P^{\rho} U^{\rho})
		- \div (|U^{\rho}|^2 b) - 2 U^{\rho} \div (a \otimes U^{\rho}),
\end{equation}
which is satisfied in the sense of distributions by solutions of \eqref{eq:lemperturbedsys} on $Q_T$.
%\footnote{While solutions are not smooth in the time variable unless $a$ and $b$ are known to be smooth, all the integration by parts may be justified after an appropriate approximation.}
Let us analyze the convergence of each term in \eqref{suitableeq} as $\rho_k \dto 0$. Recall that 
\begin{equation}
	P^{\rho_k} \wto P \text{ in } L^{3/2}_\loc(Q_T), \quad U^{\rho_k} \to U \text{ in } L^3_\loc(Q_T).
	\label{}
\end{equation}
This readily implies
\begin{equation}
	|U^{\rho_k}|^2 (U^{\rho_k})_{\rho_k} \to |U|^2 U \text{ in }L^1_\loc(Q_T), \quad
	P^{\rho_k} U^{\rho_k} \to PU \text{ in } L^1_\loc(Q_T),
\end{equation}
\begin{equation}
	|U^{\rho_k}|^2 \to |U|^2 \text{ in } L^{3/2}_\loc(Q_T).
	\label{}
\end{equation}
Moreover, according to the estimates \eqref{energyinterp1} and \eqref{energyinterp2}, \begin{equation}
	|U^{\rho_k}|^2 b \to |U|^2 b \text{ in } L^1(Q_T),
\end{equation}
\begin{equation}
	U^{\rho_k} \div (a \otimes U^{\rho_k}) \to U \div (a \otimes U) \text{ in } \cS'(Q_T).
	\label{}
\end{equation}
It remains to analyze the term $|\nabla U^{\rho_k}|^2$. Upon passing to a subsequence, $|\nabla U^{\rho_k}|^2$ converges weakly-star in $\cM(Q_T)$, the space consisting of finite Radon measures on $Q_T$, but its limit may not be $|\nabla U|^2$. On the other hand, recall that $\nabla U^{\rho_k} \wto \nabla U$ in $L^2(Q_T)$. Hence, by lower semicontinuity of the $L^2$ norm with respect to weak convergence,
\begin{equation}
	\liminf_{\rho_k \dto 0} \int_E |\nabla U^{\rho_k}|^2 - |\nabla U|^2 \, dx \geq 0
\end{equation}
for all Borel measurable sets $E$ contained in $Q_T$. Therefore, upon passing to a further subsequence,
\begin{equation}
	\mu := \lim_{\rho_k \dto 0} |\nabla U^{\rho_k}|^2 - |\nabla U|^2
\end{equation}
is a non-negative finite measure on $Q_T$, and \eqref{suitableeq} becomes
\begin{equation}
	\p_t |U|^2 + 2|\nabla U|^2 = $$ $$ = \Delta |U|^2 - \div ((|U|^2 + 2P)U)	- \div (|U|^2 b) - 2 U \div (a \otimes U) - 2\mu.
\end{equation}

3. \textit{Global energy inequality}. Finally, let us pass from the local energy inequality \eqref{localenergy} to the global energy inequality \eqref{globalenergy} in the following standard way (see Lemari{\'e}-Rieusset \cite{lemarie1}, p. 319).

Let $0 \leq \eta \in C^\infty_0(\R)$ be an even function such that $\eta \equiv 1$ for $|t| \leq 1/4$, $\eta \equiv 0$ for $|t| \geq 1/2$, and $\int_\R \eta \, dt = 1$. Define $\eta_\varepsilon(t) := \varepsilon^{-1} \eta(\varepsilon^{-1} t)$. Given $0 < t_0 < t_1 < T$, consider
\begin{equation}
	\psi_\varepsilon(t) := \int_{-\infty}^t \eta_\varepsilon(t'-t_0) - \eta_\varepsilon(t'-t_1) \, dt', \quad t \in \R.
\end{equation} The functions $\psi_\varepsilon$ are smooth approximations of the characteristic function $\ind_{(t_1,t_2)}$. Now let $0 \leq \varphi \in C^\infty_0(\R^3)$ such that $\varphi \equiv 1$ on $B(1)$ and $\varphi \equiv 0$ outside $B(2)$. Define 
\begin{equation}
	\Phi_{\varepsilon,R}(x,t) := \psi_\varepsilon(t) \varphi\big(\frac{x}{R}\big)^2, \quad (x,t) \in \R^{3+1}.
\end{equation}
Using $\Phi_{\varepsilon,R}$ in the local energy inequality \eqref{localenergy} with $0 < \varepsilon \ll 1$, we have
	\begin{equation}
		-\int \frac{d\psi_\varepsilon}{dt} \varphi^2\big(\frac{x}{R}\big) |U|^2 + 2\int \psi_\varepsilon \varphi^2\big(\frac{x}{R}\big) |\nabla U|^2 \leq $$ $$
		\leq \int \psi_\varepsilon \Delta \Big( \varphi^2\big(\frac{x}{R}\big) \Big) |U|^2 + \frac{2}{R} \int \psi_\varepsilon \varphi\big(\frac{x}{R}\big) (\nabla\varphi)\big(\frac{x}{R}\big) \cdot |U|^2 U $$ $$
		+ \frac{4}{R} \int \psi_\varepsilon \varphi\big(\frac{x}{R}\big) (\nabla \varphi)\big(\frac{x}{R}\big) \cdot PU + \frac{2}{R} \int \psi_\varepsilon \varphi\big(\frac{x}{R}\big) (\nabla \varphi)\big(\frac{x}{R}\big) \cdot |U|^2 b $$ $$
		+ \frac{4}{R} \int \psi_\varepsilon \varphi\big(\frac{x}{R}\big) \, a \otimes U : U \otimes (\nabla \varphi)\big(\frac{x}{R}\big) + 2 \int \psi_\varepsilon \varphi^2\big(\frac{x}{R}\big) \, a \otimes U : \nabla U,
	\end{equation}
	where all the integrals are taken over $Q_T$. Since $U$ is in the energy space and $P \in L^2_t L^{3/2}_x(Q_T) + L^2(Q_T)$, we may take the limit as $R \upto \infty$ to obtain
	\begin{equation}
		-\int \frac{d\psi_\varepsilon}{dt} |U|^2 + 2\int \psi_\varepsilon |\nabla U|^2 \leq 2 \int \psi_\varepsilon a \otimes U : \nabla U.
	\end{equation}
	Moreover, if $t_0, t_1$ are Lebesgue points of $\norm{U(\cdot,t)}_{L^2(\R^3)}$, then in the limit as $\varepsilon \dto 0$,
	\begin{equation}
		\label{globalenergyineqawayfrom}
		\int_{\R^3} |U(t_1)|^2 + 2 \int_{t_0}^{t_1} \int_{\R^3} |\nabla U|^2 \leq \int_{\R^3} |U(t_0)|^2 +2 \int_{t_0}^{t_1} \int_{\R^3} a \otimes U : \nabla U.
	\end{equation}
	The case when the initial time is zero is recovered from \eqref{globalenergyineqawayfrom} by taking the limit as $t_0 \dto 0$, since $\lim_{t \dto 0} \norm{U(\cdot,t) - U_0}_{L^2(\R^3)} = 0$ was already demonstrated in \eqref{strongconvergencenearzero}. The proof is complete.
\end{proof}

\subsection{Existence of Calder{\'o}n solutions}

We are now ready to prove the existence of Calder{\'o}n solutions.

\begin{thm}[Existence of Calder{\'o}n solutions]\label{existence}
	Let $T > 0$ and $3 < p < \infty$. Suppose $u_0 \in \dot B^{s_p}_{p,p}(\R^3)$ is a divergence-free vector field. Then there exists a Calder{\'o}n solution $u$ on $Q_T$ with initial data $u_0$.
\end{thm}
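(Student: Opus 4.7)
The plan is to realize the decomposition $u = U + V$ concretely by first using the splitting lemma on the initial data, then solving for $V$ via subcritical well-posedness, and finally constructing $U$ as an energy solution of the perturbed system via Proposition \ref{energysol}. The remaining work is to check that the pair $(u,p)$ inherits suitability for the original Navier-Stokes equations.

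First, fix $q$, $\theta$, and $\varepsilon = s - s_q > 0$ as in Lemma \ref{splitdata}. Applied with a parameter $\lambda > 0$ to be chosen, the lemma produces a divergence-free decomposition $u_0 = U_0 + V_0$ with $U_0 \in L^2(\R^3)$ and $V_0 \in \dot B^{s_q+\varepsilon}_{q,q}(\R^3)$, and with the bound on $V_0$ going like $\lambda^{1-p/q}$. Since $s_q+\varepsilon$ is a subcritical regularity, the subcritical existence theory of Theorem \ref{subcrittheory} guarantees a mild solution $V \in \cK^{s_q+\varepsilon}_q(Q_{T'})$ on a time interval $[0,T']$ whose length depends only on $\norm{V_0}_{\dot B^{s_q+\varepsilon}_{q,q}}$. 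By choosing $\lambda$ large enough we force $\norm{V_0}$ to be small enough that $T' \geq T$; thus $V$ is defined on all of $Q_T$.

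Next, I would verify that $V$ is an admissible drift for Proposition \ref{energysol}. From the Kato-space membership and parabolic smoothing, $V$ lies in $L^l_t L^r_x(Q_T)$ for some pair with $2/l + 3/r = 1$ and $3 < r < \infty$ (in particular, interpolation between the bounds built into $\cK^{s_q+\varepsilon}_q$ will deliver such a pair, and $\div V = 0$ since $V$ is a mild NSE solution). Taking $a = b = V$ in Proposition \ref{energysol}, with initial data $U_0 \in L^2$, I obtain $U$, $P$ with the required energy-space regularity, weak continuity in $L^2$, strong attainment of the initial condition, solvability of \eqref{nselot} distributionally, the local energy inequality \eqref{suitability}, and the global energy inequality \eqref{globalenergyineq}. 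I then set $u := U + V$ and $p := P + Q$ with $Q := (-\Delta)^{-1}\div\div V\otimes V$. Adding the distributional equation for $V$ (which is mild, hence distributional, with its own pressure $Q$) to \eqref{nselot} produces the Navier-Stokes system for $(u,p)$.

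The main obstacle is verifying the suitability condition \eqref{localenergyineq} for the full pair $(u,p)$. My plan is to exploit the fact that $V$, being a mild solution in a subcritical Kato space, is smooth inside $Q_T$ by standard parabolic regularity, so it satisfies the pointwise local energy equality
\[
\p_t|V|^2 + 2|\nabla V|^2 = \Delta|V|^2 - \div((|V|^2+2Q)V).
\]
I would also compute the cross term $U \cdot V$: testing the perturbed equation \eqref{nselot} against $V$ and the NSE for $V$ against $U$, both of which are legitimate operations because $V$ and its derivatives are smooth on $Q_T$ and $U$, $\nabla U \in L^2_{\loc}$. Adding the three identities, the suitability \eqref{suitability} of $(U,P)$, the pointwise energy equality for $V$, and the identity for $2U\cdot V$, the advection terms $\div(U\otimes V)$, $\div(V\otimes U)$ combine with $\div(|U|^2 V) + 2U\div(V\otimes U)$ precisely to form $\div(|u|^2 u) - \div(|U|^2 U) - \div(|V|^2 V) - \text{cross terms involving }V\otimes V, U\otimes U$; the pressure terms assemble into $-\div(2p\, u)$ because $p = P + Q$ and $u = U+V$. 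Because the $V$-equation holds with equality and only the $U$-inequality contributes a defect measure, the sum yields exactly \eqref{localenergyineq} on non-negative test functions. The only subtlety is justifying that all products involving $V$ and $U$ are integrable against compactly supported test functions, but this follows from $V \in L^\infty_{\loc}$ in the interior together with $U \in L^\infty_t L^2_x \cap L^2_t \dot H^1_x$, so $|U|^2 V$, $U\otimes V \cdot \nabla U$, $PV$, $QU$ are all locally integrable. This completes the construction.
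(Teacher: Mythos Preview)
Your construction of the decomposition $u_0 = U_0 + V_0$, the mild solution $V$, and the energy solution $U$ via Proposition~\ref{energysol} is exactly what the paper does in its Step~1; the differences lie entirely in how you establish the local energy inequality \eqref{localenergyineq} for the full pair $(u,p)$.

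The paper does \emph{not} combine the three pieces algebraically. Instead it returns to the mollified approximations $U^\rho$ built in the proof of Proposition~\ref{energysol}, sets $u^\rho := U^\rho + V$, $p^\rho := P^\rho + Q$, and writes down the mollified local energy \emph{equality} for $u^\rho$; this equality carries an explicit error term $-u^\rho\,\div\big(u^\rho\otimes(V-(V)_\rho) + V\otimes(U^\rho-(U^\rho)_\rho)\big)$ reflecting that $V$ solves the true Navier--Stokes equations rather than the mollified ones. That error term vanishes in the limit $\rho_k \downarrow 0$, and the remaining convergence arguments are identical to Step~2 of Proposition~\ref{energysol}, producing \eqref{localenergyineq} with a non-negative defect measure.

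Your approach---add the local energy inequality \eqref{suitability} for $U$, the pointwise energy equality for the smooth $V$, and an exact cross-term identity for $2U\cdot V$ obtained by testing each equation against the other field---is a legitimate alternative and is the standard ``weak--strong'' bookkeeping (the paper itself uses the analogous identity \eqref{eq:weakstrongid} elsewhere). It is more direct, since it never reopens the approximation scheme, but the price is that you must justify the cross-term product rule $\p_t(U\cdot V) = (\p_t U)\cdot V + U\cdot(\p_t V)$ in the sense of distributions, with $\p_t U$ only in $L^2_t H^{-3/2}_x$; this is routine given that $V\varphi$ is smooth and compactly supported, but it does require an approximation argument of its own. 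The paper's route trades that justification for the (already available) convergence machinery of Proposition~\ref{energysol}. Either way the algebra closes up; your sketch of how the advection and pressure terms recombine into $-\div\big((|u|^2+2p)u\big)$ is correct.
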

\begin{proof}
	1. \textit{Splitting arguments}.
		Assume the hypotheses of the theorem, and let $q \in (p,\infty)$. According to Lemma \ref{splitdata}, there exists $0 < \varepsilon <-s_q$ such that for all $M>0$, we may decompose the initial data as follows:
	\begin{equation}\label{eq:decomposeme}
		u_0 = U_0 + V_0,
	\end{equation}
where
	\begin{equation}
		U_0 \in L^2(\R^3), \quad \norm{U_0}_{L^2(\R^3)} \leq C(\norm{u_0}_{\dot B^{s_p}_{p,p}(\R^3)},M),
		\label{}
	\end{equation}
	\begin{equation}
		V_0 \in \dot B^{s_q+\varepsilon}_{q,q}(\R^3), \quad \norm{V_0}_{\dot B^{s_q+\varepsilon}_{q,q}(\R^3)} < M.
		\label{}
	\end{equation}
	The decomposition depends on $M > 0$.
	By Theorem \ref{subcrittheory}, there exists a constant $\gamma := \gamma(q,\varepsilon,T) > 0$ such that whenever
	\begin{equation}
		\norm{V_0}_{\dot B^{s_q+\varepsilon}_{q,q}(\R^3)} \leq \gamma,
		\label{}
	\end{equation}
	there exists a unique mild solution $V \in \cK^{s_q+\varepsilon}_{q}(Q_{T})$ of the Navier-Stokes equations on $Q_T$ with initial data $V_0$, and the mild solution obeys
	\begin{equation}\label{katoimplies}
	\norm{V}_{\cK^{s_q+\varepsilon}_{q}(Q_{T})} \leq c(q,\varepsilon) \norm{V_0}_{\dot B^{s_q+\varepsilon}_{q,q}(\R^3)}.
\end{equation}
Let $M = \gamma$ when forming the decomposition \eqref{eq:decomposeme}. The corresponding mild solution $V$ with initial data $V_0$ exists on $Q_T$ and satisfies \eqref{katoimplies}. Hence,
\begin{equation}
	V \in L^l_t L^q_x(Q_T), \quad \frac{2}{l} + \frac{3}{q} = 1.
	\label{}
\end{equation}
Let $U \in L^\infty_t L^2_x \cap L^2_t \dot H^1_x(Q_T)$ be a solution constructed in Proposition \ref{energysol} to the perturbed Navier-Stokes equation \eqref{nseloworder} with initial data $U_0 \in L^2(\R^3)$ and coefficients $a=b=V$. If $u=U+V$ satisfies the local energy inequality \eqref{localenergyineq}, then $u$
is a Calder{\'o}n solution on $Q_T$ with initial data $u_0$.

2. \textit{Suitability of full solution}. Let us return to the approximation procedure for $U$ in Proposition \ref{energysol}. Define
\begin{equation}
	u^\rho := U^{\rho} + V, \quad p^\rho := P^{\rho} + Q, \quad Q := (-\Delta)^{-1} \div \div V \otimes V.
	\label{}
\end{equation}
We will prove that the solution $u=U+V$ with pressure $p=P+Q$ satisfies
	\begin{equation}
		\p_t |u|^2 + 2|\nabla u|^2 = \Delta |u|^2 - \div ((|u|^2+2p)u) - 2\mu
	\end{equation}
	in the sense of distributions on $Q_T$, where $\mu$ is a finite non-negative measure on $Q_T$. Let us start from the mollified local energy equality for $u^{\rho}$,
\begin{equation}
	\p_t |u^{\rho}|^2 + 2|\nabla u^{\rho}|^2 = \Delta |u^{\rho}|^2 - \div (|u^{\rho}|^2(u^{\rho})_{\rho} + 2 p^{\rho} u^{\rho}) $$ $$
	- u^{\rho} \, \div \left( u^{\rho} \otimes (V-(V)_\rho) + V \otimes (U^\rho - (U^\rho)_\rho \right).
\end{equation}
This is the same equality as if $u^{\rho}$ solved the mollified Navier-Stokes equations (see, e.g., \cite{lemarie1}, p. 318), except for the second line, which adjusts for the fact that $V$ solves the actual Navier-Stokes equations instead of the mollified equations. The distribution on the second line converges to zero as $\rho_k \dto 0$, and all the other convergence arguments are as in Step 2 of Proposition \ref{energysol}.
\end{proof}

We now demonstrate that there exists a Calder{\'o}n solution which agrees with the mild solution until its maximal time of existence. %To clarify, this is not a weak-strong uniqueness result for Calder{\'o}n solutions (although we expect that such a result is true, as remarked in the introduction).

\begin{thm}[Mild Calder{\'o}n solutions]\label{mildcalderon}
	Under the assumptions of Theorem \ref{existence}, there exists a Calder{\'o}n solution $u$ on $Q_T$ with initial data $u_0$ such that $u$ agrees with the mild solution $NS(u_0)$ until the time $\min(T,T^*(u_0))$.
\end{thm}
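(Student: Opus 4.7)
The plan is to exhibit a Calder\'on solution on $Q_{T^\sharp}$, where $T^\sharp := \min(T, T^*(u_0))$, that coincides with $NS(u_0)$ throughout, and to extend it arbitrarily to $Q_T$ via Theorem \ref{existence} when $T > T^*(u_0)$. I begin by applying Lemma \ref{splitdata} with the parameter $M$ taken small enough that the mild solution $V$ with initial data $V_0 \in \dot B^{s_q+\varepsilon}_{q,q}(\R^3)$ exists on all of $Q_T$ and lies in $\cK^{s_q+\varepsilon}_q(Q_T) \into L^l_t L^q_x(Q_T)$ with $\tfrac{2}{l} + \tfrac{3}{q} = 1$, exactly as in the proof of Theorem \ref{existence}. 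I then \emph{define}
\begin{equation}
U := NS(u_0) - V \qquad \text{on } [0, T^\sharp).
\end{equation}
Subtracting the Duhamel formulas for $NS(u_0)$ and $V$ shows that
\begin{equation}
U(\cdot, t) = e^{t\Delta} U_0 - \int_0^t e^{(t-s)\Delta} \bP \div \bigl( U \otimes U + U \otimes V + V \otimes U \bigr) \, ds,
\end{equation}
so $U$ formally solves the perturbed system \eqref{nselot} with $a = b = V$ and carries initial datum $U_0 \in L^2(\R^3)$.

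The next step is to verify that $U$ actually lies in the energy class $L^\infty_t L^2_x \cap L^2_t \dot H^1_x$ on each $[0, T_1]$ with $T_1 < T^\sharp$ and attains $U_0$ strongly in $L^2$ as $t \dto 0$. On a short initial interval $[0, T_0]$ this will be a fixed-point argument in the energy space modeled on Lemma \ref{stokes}: the subcritical Prodi-Serrin bound $\tfrac{2}{l} + \tfrac{3}{q} = 1$ with $q > 3$, together with the absolute continuity of $\int_0^t \|V(\cdot, s)\|_{L^q}^l \, ds$ at $t = 0$, makes the linear contribution from $V$ a small perturbation of the heat semigroup. Propagation from $[0, T_0]$ up to $[0, T_1]$ is then obtained from an \emph{a priori} energy bound paralleling the one derived in the proof of Lemma \ref{stokes}. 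The pressure decomposes as $P = P_1 + P_2$ with $P_1 := (-\Delta)^{-1}\div\div U \otimes U \in L^2_t L^{3/2}_x$ and $P_2 \in L^2$ by Calder\'on-Zygmund estimates, exactly as in Proposition \ref{energysol}.

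For the suitability requirements, I use that standard parabolic smoothing makes $NS(u_0)$ and $V$ smooth on $(0, T^\sharp) \times \R^3$, so $U$ is smooth there as well; the local energy identities corresponding to \eqref{suitability} and \eqref{localenergyineq}, and the global energy identity \eqref{globalenergyineq}, then hold pointwise by direct computation, and a fortiori as the inequalities demanded by Definition \ref{calderonsol}. Hence $u := U + V = NS(u_0)$ is a Calder\'on solution on $Q_{T^\sharp}$. Finally, if $T > T^*(u_0)$, I pick $t_0 \in (0, T^*(u_0))$, use strong continuity of the mild solution with values in $\dot B^{s_p}_{p,p}(\R^3)$ to see that $NS(u_0)(\cdot, t_0) \in \dot B^{s_p}_{p,p}(\R^3)$, and apply Theorem \ref{existence} on $[t_0, T]$ to glue a Calder\'on extension at time $t_0$.

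The main obstacle is the energy-space construction in the second step: the low regularity $U_0 \in L^2$ must be balanced against the subcritical but unbounded norms of $V$ near $t = 0$. The Kato space structure of $V$, and in particular short-time smallness of its Prodi-Serrin norm, is precisely what is needed to close the contraction and to establish strong $L^2$ attainment of the initial datum; without this, one would be forced to route through the weaker weak-$L^2$ continuity that Proposition \ref{energysol} produces in general.
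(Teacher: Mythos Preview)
Your overall strategy---define $U := NS(u_0) - V$, verify it lies in the energy class, then extend past $T^*(u_0)$---is different from the paper's, which instead returns to the mollified approximations $U^\rho$ from Theorem~\ref{existence}, shows they are uniformly bounded in $\mathring{\cK}_q(Q_S)$ for small $S$, and concludes that the Calder\'on solution already built in Theorem~\ref{existence} coincides with $NS(u_0)-V$ by uniqueness in $\mathring{\cK}_q$; agreement is then propagated by weak--strong uniqueness. The paper never glues: a single Calder\'on solution exists on all of $Q_T$ from the start.

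There is a genuine gap in your extension step. Applying Theorem~\ref{existence} on $[t_0,T]$ with initial data $NS(u_0)(\cdot,t_0)$ produces a Calder\'on solution with its \emph{own} splitting $NS(u_0)(\cdot,t_0) = \tilde U_0 + \tilde V_0$ and a \emph{new} mild solution $\tilde V$ on $[t_0,T]$. But Definition~\ref{calderonsol} requires a single decomposition $u_0 = U_0 + V_0$ with one mild solution $V$ on all of $Q_T$; gluing two pieces with different $V$'s does not yield a Calder\'on solution on $Q_T$. The fix is to keep the \emph{same} $V$ (which is defined on all of $Q_T$) and extend $U$ by applying Proposition~\ref{energysol} on $[t_0,T]$ with $a=b=V$ and initial data $U(\cdot,t_0)\in L^2$; but then you must also (i) verify suitability of $(u,p)$ for the full Navier--Stokes equations on $[t_0,T]$ as in Step~2 of Theorem~\ref{existence}, (ii) show via weak--strong uniqueness that the extension agrees with $NS(u_0)-V$ on $[t_0,T^*)$ so that the conclusion ``agrees with $NS(u_0)$ until $\min(T,T^*)$'' actually holds, and (iii) check that the local energy inequalities survive gluing across $t_0$. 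None of this is addressed.

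A secondary issue: your ``fixed-point argument in the energy space modeled on Lemma~\ref{stokes}'' will not close as stated. Lemma~\ref{stokes} succeeds only because the mollification makes $B_\rho$ bounded on the energy space; the unmollified bilinear form $B$ is not controlled by the energy norm alone (the nonlinearity is energy-supercritical). You need to intersect with a Kato-type space such as $\mathring{\cK}_q(Q_S)$ and use an estimate of the form $\|B(v,w)\|_{L^\infty_t L^2_x} \le c\|v\|_{L^\infty_t L^2_x}\|w\|_{\cK_q}$, which is exactly what the paper does in Step~3 of its proof. Since you already know $NS(u_0), V \in \mathring{\cK}_q$ near $t=0$, this is available, but it must be invoked explicitly.
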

The mild solution $NS(u_0)$ under consideration is constructed in Theorem \ref{mildexist}.
\begin{proof}
	\textit{1. Introducing the integral equation.}
	Let $U_0,V_0,V$ be as in Step 1 of Theorem \ref{existence}. From now on, we will denote $T_\sharp := T$ in the statement of the theorem, so that the variable $T$ can be reused.

	The set-up is as follows. %For all $T > 0$, consider the Banach space
%	\begin{equation}
%		X_T := \mathring{\cK}_p(Q_T) + \mathring{\cK}^{s_q+\varepsilon}_q(Q_T),
%\end{equation}
%\begin{equation}
%	\norm{u}_{X_T} := \inf \big\lbrace \norm{v}_{\cK_p(Q_T)} + \norm{w}_{\cK^{s_q+\varepsilon}_q(Q_T)} : $$ $$ u = v + w, \,
%	v \in \mathring{\cK}_p(Q_T), \, w \in \mathring{\cK}^{s_q+\varepsilon}_{q}(Q_T) \big\rbrace.
%\end{equation}
	Recall that for all $0 < T < \min(T_\sharp,T^*)$,
	\begin{equation}
		NS(u_0) \in \mathring{\cK}_p(Q_T) \cap \mathring{\cK}_\infty(Q_T),
		\label{}
	\end{equation}
	\begin{equation}
		V \in \cK_q^{s_q+\varepsilon}(Q_T) \cap \cK_\infty^{-1+\varepsilon}(Q_T).
		\label{}
	\end{equation}
Let us define $\tilde{U} := NS(u_0) - V$.
Then $\tilde{U} \in \mathring{\cK}_q(Q_T) \cap \mathring{\cK}_\infty(Q_T)$ for all $0 < T < \min(T_\sharp,T^*)$ is a mild solution of the integral equation
	\begin{equation}\label{integraleqn}
W(\cdot,t) = e^{t \Delta} U_0 - B(W,W)(\cdot,t) - L(W)(\cdot,t),
	\end{equation}
	where we formally define
	\begin{equation}
		B(v,w)(\cdot,t) := \int_0^t e^{(t-s)\Delta} \bP \div v \otimes w \, ds,
	\end{equation}
	\begin{equation}
		L(w)(\cdot,t) := B(w,V)(\cdot,t) + B(V,w)(\cdot, t),
	\end{equation}
	for all vector fields $v$, $w$ on spacetime. The initial data $U_0$ belongs to the class
	\begin{equation}\label{eq:initialdatabelongs}
		U_0 \in [\dot B^{s_p}_{p,p}(\R^3) + \dot B^{s_q+\varepsilon}_{q,q}(\R^3)] \cap L^2(\R^3).
\end{equation}
First, we will demonstrate that the mild solution of the integral equation \eqref{integraleqn} in $\mathring{\cK_q}(Q_T)$ is unique, where $0 < T < \min(T_\sharp,T^*)$. Notice that in the decomposition $u = U + V$ of a Calder{\'o}n solution, the vector field $U$ formally solves \eqref{integraleqn}. We will show that in the proof of Theorem \ref{existence}, it is possible to construct $U$ in the space $\mathring{\cK}_q(Q_T)$. Therefore, $U$ will satisfy $U \equiv \tilde{U}$ on $Q_T$, and the proof will be complete.

%We note that another possibility is to only deal with mild solutions of \eqref{integraleqn} in $\mathring{\cK}_q(Q_T)$. However, the arguments are not difficult in the space $X_T$.

	\textit{2. Existence of mild solutions to integral equation.}
	Let us summarize the local well-posedness of mild solutions $W \in \mathring{\cK}_q(Q_T)$ to the integral equation \eqref{integraleqn}. Our main goal is to establish estimates on the operators $B$ and $L$. Then the local existence of mild solutions in $\mathring{\cK}_q(Q_S)$ for some $0 < {S} \ll 1$ will follow from Lemma~\ref{abstract}. 
%	\begin{equation}
%		v:=v_1+v_2, \quad w:=w_1+w_2,
%	\end{equation}
	%\begin{equation}
	%	v_1,w_1 \in \mathring{\cK}_p(Q_T), \quad v_2,w_2 \in \mathring{\cK}^{s_q+\varepsilon}_q(Q_T).
	%	\label{}
	%\end{equation}
	%(The generic constants $c > 0$ below are independent of $v_1,v_2,w_1,w_2,T$.)
	
	Let $0 < T < \min(T_\sharp,T^*)$ and $v,w \in \mathring{\cK}_q(Q_T)$. First, observe that $L \: \mathring{\cK}_q(Q_T) \to \mathring{\cK}_q(Q_T)$ is bounded with operator norm satisfying $\norm{L}_{\mathring{\cK}_q(Q_T)} < 1$ when $0 < T \ll 1$. Indeed, according to the Kato estimates in Lemma \ref{katoest},
	\begin{equation}\label{Lest1}
	\norm{L(v)}_{\cK_q(Q_T)} = \norm{B(v,V) + B(V,v)}_{\cK_q(Q_T)} \leq $$ $$
	\leq c T^{\frac{\varepsilon}{2}} \norm{v}_{\cK_q(Q_T)} \norm{V}_{\cK^{s_q+\varepsilon}_{q}(Q_T)}.
\end{equation}
That $L$ preserves the decay properties near the initial time follows from examining the limit $T \dto 0$ in the estimates above.
One may also show that $B$ is bounded on $\mathring{\cK}_q(Q_T)$ with norm independent of $T$.
%\begin{equation}
%		\norm{B(v_1,w_1)}_{\cK_p(Q_T)} \leq c \norm{v_1}_{\cK_p(Q_T)} \norm{w_1}_{\cK_p(Q_T)},
%	\end{equation}
	%\begin{equation}
	%	\norm{B(v_2,w_2)}_{\cK^{s_q+\varepsilon}_{q}(Q_T)} \leq c T^{\frac{\varepsilon}{2}} \norm{v_2}_{\cK^{s_q+\varepsilon}_{q}(Q_T)} \norm{w_2}_{\cK^{s_q+\varepsilon}_{q}(Q_T)},
	%\end{equation}
	%\begin{equation}
	%	\norm{B(v_1,w_2)}_{\cK^{s_q+\varepsilon}_{q}(Q_T)} \leq c \norm{v_1}_{\cK_p(Q_T)} \norm{w_2}_{\cK^{s_q+\varepsilon}_{q}(Q_T)},
	%\end{equation}
	%\begin{equation}\label{Bestlast}
%		\norm{B(v_2,w_1)}_{\cK^{s_q+\varepsilon}_{q}(Q_T)} \leq c \norm{v_2}_{\cK^{s_q+\varepsilon}_{q}(Q_T)} \norm{w_1}_{\cK_p(Q_T)}.
%\end{equation}
%Similarly, it is clear that $B$ preserves the decay properties near the initial time from examining the limit $T \dto 0$ in the above estimates.
Finally, due to \eqref{eq:initialdatabelongs}, we have the property
\begin{equation}\label{u0where}
	\lim_{T \dto 0} \norm{e^{t\Delta} U_0}_{\cK_q(Q_T)} = 0.
\end{equation}
Hence, Lemma \ref{abstract} implies the existence of a mild solution $W \in X_{S}$ to the integral equation \eqref{integraleqn} for a time $0 < S \ll 1$. The solution $W(\cdot,t)$ is also continuous in $L^q(\R^3)$ after the initial time. As for uniqueness, note that each mild solution $W \in \mathring{\cK}_q(Q_T)$ of \eqref{integraleqn} obeys the property that $W + V \in \mathring{\cK}_q(Q_T)$ is a mild solution of the Navier-Stokes equations, so $W + V \equiv NS(u_0)$ and $W \equiv \tilde{U}$ on $Q_T$.

\begin{comment}
As for uniqueness, let us note that if $\tilde{W} \in X_{T_1}$ is a second mild solution for some $0 < T_1 \leq T_0$, then $\tilde{W}$ falls into the perturbative regime of Lemma \ref{abstract} close to the initial time and hence agrees with $W$ until a time $0 < T_2 \leq T_1$. Existence and uniqueness are then propagated forward in time by developing the $L^q(\R^3)$ subcritical theory for the integral equation \eqref{integraleqn} from initial times greater than zero, see Step 4. (A different approach would be to develop the $L^p(\R^3) + L^q(\R^3)$ subcritical theory.) Hence, we obtain that the solution $\tilde{U}$ from Step 1 is the unique mild solution of \eqref{integraleqn} in $X_T$ for all $0 < T < \min(T_\sharp,T^*)$.
\end{comment}

\textit{3. $U$ is in $\mathring{\cK}_q(Q_S)$ for $0 < S \ll 1$.}
Let us return to the approximations $U^\rho$ constructed in the proofs of Theorem \ref{existence} and Proposition \ref{energysol}. Recall that $U^\rho$ solves the system
	\begin{equation}
		\p_t U^\rho - \Delta U^\rho + \div U^\rho \otimes (U^\rho)_\rho + \div U^\rho \otimes V + \div V \otimes U^\rho = - \nabla P^\rho $$ $$
		\div U^\rho = 0,
\end{equation}
with initial condition $U^\rho(\cdot,0) = U_0$.
%in $Q_{T^*}$.
By repeating the arguments in Step 2, there exists a time $S > 0$ independent of the parameter $\rho > 0$ and a unique mild solution $W^\rho \in \mathring{\cK}_q(Q_S)$ of the integral equation
\begin{equation}\label{integraleqn2}
		W^\rho(\cdot,t) = e^{t \Delta} U_0 - B_\rho(W,W)(\cdot,t) - L(W)(\cdot,t),
	\end{equation}
	where the operator $B_\rho$ is defined by
	\begin{equation}
		B_\rho(v,w)(t) := \int_0^t e^{(t-s)\Delta} \bP \div v \otimes (w)_\rho \, ds.
	\end{equation}
	Since $U_0 \in L^2(\R^3)$, we may shorten the time $S > 0$ so that $W^{\rho} \in L^\infty_t L^2_x(Q_S)$. This assertion follows from applying Lemma \ref{abstract} in the intersection space $\mathring{\cK}_q(Q_S) \cap L^\infty_t L^2_x(Q_S)$ with $0 < S \ll 1$, once we have the necessary estimates:
	\begin{equation}
		\norm{L(v)}_{L^\infty_t L^2_x(Q_T)} \leq c T^{\frac{\varepsilon}{2}} \norm{v}_{L^\infty_t L^2_x(Q_{T})} \norm{V}_{\cK^{s_q+\varepsilon}_{q}(Q_T)},
	\end{equation}
	\begin{equation}
		\norm{B_\rho(v,w)}_{L^\infty_t L^2_x(Q_T)} \leq c \norm{v}_{L^\infty_t L^2_x(Q_T)} \norm{w}_{\cK_q(Q_T)}.
	\end{equation}
	In fact, we obtain that $W^\rho \in C([0,S];L^2(\R^3) \cap L^2_t \dot H^{1}_x(Q_{S})$. This follows from viewing $W^\rho$ as a solution of the mollified Navier-Stokes equations (no lower order terms) with initial data $U_0 \in L^2(\R^3)$ and forcing term $\div F$, where
	\begin{equation}
		F = -W^\rho \otimes V - V \otimes W^\rho \in L^2(Q_S),
		\label{}
	\end{equation}
	since $V \in \cK^{-1+\varepsilon}_{\infty}(Q_S)$ and $W^\rho \in L^\infty_t L^2_x(Q_S)$.
	%\begin{equation}
	%	W^\rho(\cdot,t) = e^{t\Delta} U_0 - \int_0^t e^{(t-s)\Delta} \bP \div (W^\rho \otimes V + V \otimes W^\rho + W \otimes W^{\rho}) \, ds,
	%	\label{}
	%\end{equation}
	\begin{comment}
	Recall that $W^\rho$ satisfies the mollified perturbed Navier-Stokes equations on $Q_{S}$. Therefore, $W^\rho$ satisfies the following energy equality:
\begin{equation}\label{eq:anotherenergyeq}
	\int_{\R^3} |W^\rho(x,t_2)|^2 \, dx + 2 \int_{t_1}^{t_2} \int_{\R^3} |\nabla W^\rho|^2 \, dx \,dt =$$ $$= \int_{\R^3} |W^\rho(x,t_1)|^2 \,dx + 2 \int_{t_1}^{t_2} \int_{\R^3} V \otimes (W^\rho)_\rho : \nabla W^\rho \, dx\, dt,
\end{equation}
for all $0 < t_1 < t_2 \leq T_0$. Observe that
\begin{equation}
	\int_0^{T_0} \int_{\R^3} |V \otimes (W^\rho)_\rho : \nabla W^\rho| \, dx \,dt \leq $$ $$ \leq c T_0^{\frac{\varepsilon}{2}} \norm{V}_{\cK^{-1+\varepsilon}_{\infty}(Q_{T_0})} \norm{W^\rho}_{L^\infty_t L^2_x(Q_{T_0})} \norm{t^{\frac{1}{2}} \nabla W^\rho}_{L^\infty_t L^2_x(Q_{T_0})}.
\end{equation}
Hence, by taking $t_1 \dto 0$ in \eqref{eq:anotherenergyeq}, we obtain that $W^\rho \in L^2_t \dot H^{1}_x(Q_{T_0})$. Now, by the uniqueness assertion in Lemma \ref{stokes}, we must have that $W^\rho \equiv U^\rho$ on $Q_{T_0}$.
	\end{comment}
	
Finally, we pass to the limit as $\rho \dto 0$. 
\begin{comment}Observe that
\begin{equation}
	X_{T} \into \cK_p(Q_{T}) + \cK^{s_q+\varepsilon}_q(Q_{T}) \simeq (\cK^{-s_p}_{p',1}(Q_{T}) \cap \cK^{-s_q-\varepsilon}_{q',1}(Q_{T}))^*,
	\label{}
\end{equation}
\begin{equation}
	\frac{1}{p} + \frac{1}{p'} = 1, \quad \frac{1}{q} + \frac{1}{q'} = 1, \quad T > 0,
	\label{}
\end{equation}
and the duality bracket is the integral of the product.
\end{comment}
The only possible issue is that $\mathring{\cK}_q(Q_S)$ is not closed under weak-star limits, but this will not be a problem because
 Theorem~\ref{abstract} actually gave the uniform bound
\begin{equation}
\norm{U^\rho}_{\cK_q(Q_T)} \leq \kappa(T), \quad 0 < T \leq T_0,
\end{equation}
where $\kappa(T)$ is a non-negative continuous function on the interval $[0,S]$ and $\kappa(0) = 0$. Now by taking the weak-star limit
along a subsequence $\rho_k \dto 0$, we obtain
\begin{equation}
\norm{U}_{\cK_q(Q_T)} \leq \kappa(T), \quad 0 < T \leq S.
\end{equation}
Together with the convergence arguments in Proposition \ref{energysol}, this implies that $U \in \mathring{\cK}_q(Q_S)$ is a mild solution of the integral equation \eqref{integraleqn}. By the uniqueness argument in Step 2, $U \equiv \tilde{U}$ on $Q_S$.

	\textit{4. Agreement until $T^*(u_0)$.}
	So far, we have only shown that $U \equiv \tilde{U}$ on a short time interval $[0,S]$. After the initial time, the mild solutions $NS(u_0)$ and $V$ of the Navier-Stokes equations are in subcritical spaces:
		\begin{equation}
			NS(u_0), V \in C([t_0,\min(T_{\sharp},T^*));L^q(\R^3))
			\label{}
		\end{equation}
		for all $0 < t_0 < \min(T_{\sharp},T^*)$. We will prove that $\tilde{U}$ is in the energy space and then argue via weak-strong uniqueness. To do so, one may develop the existence theory for mild solutions $W \in C([t_0,T];L^q(\R^3))$ of the integral equation
		\begin{equation}
			W(\cdot,t) = e^{(t-t_0) \Delta} \tilde{U}(\cdot,t_0) - \int_{t_0}^{t} e^{(t-s)\Delta} \bP \div W \otimes W \, ds $$ $$
			- \int_{t_0}^{t} e^{(t-s)\Delta} \bP \div (W \otimes V + V \otimes W) \, ds,
	\end{equation}
	where $t_0 \in [T_0,\min(T_{\sharp},T^*))$.
	Note that whenever $\tilde{U}(\cdot,t_0) \in L^2(\R^3)$, the unique mild solution $W$ will also be in the energy space. The proof is similar to Steps 2-3. In this way, one obtains that
	\begin{equation}
		\tilde{U} \in C([0,T);L^2(\R^3)) \cap L^2_t \dot H^1_x(Q_T)
		\label{}
	\end{equation}
	for all $0 < T < \min(T_\sharp,T^*)$
	and satisfies the energy equality
	\begin{equation}
		\int_{\R^3} |\tilde{U}(x,t_2)|^2 \, dx + 2 \int_{t_1}^{t_2} \int_{\R^3} |\nabla \tilde{U}|^2 \, dx \,dt =$$ $$= \int_{\R^3} |\tilde{U}(x,t_1)|^2 \,dx + 2 \int_{t_1}^{t_2} \int_{\R^3} V \otimes \tilde{U} : \nabla \tilde{U} \, dx\, dt,
	\end{equation}
	for all $0 \leq t_1 < t_2 < \min(T_{\sharp},T^*)$. In fact, by the Gronwall-type argument in Lemma~\ref{stokes}, one may actually show that $\tilde{U} \in L^2_t \dot H^{1}_x(Q_{\min(T_\sharp,T^*)})$.
In addition, we know that $U$ obeys the global energy inequality \eqref{globalenergy} for almost every $0 \leq t_1 < T_\sharp$, including $t_1 = 0$, and for every $t_2 \in (t_1,T_\sharp]$. Let us write $D := U - \tilde{U}$. Then $D$ obeys the energy inequality
\begin{equation}
	\int_{\R^3} |D(x,t_2)|^2 \, dx + 2 \int_{t_1}^{t_2} \int_{\R^3} |\nabla D|^2 \, dx \, dt \leq $$ $$ \leq 2 \int_{t_1}^{t_2} \int_{\R^3} (\tilde{U} + V) \otimes D : \nabla D \, dx \, dt,
\end{equation}
where $t_1 := T_0/2$. To obtain the energy inequality for $D$, one must write $|D|^2 = |U|^2 + |\tilde{U}|^2 - 2U \cdot \tilde{U}$, $|\nabla D|^2 = |\nabla U|^2 + |\nabla \tilde{U}|^2 - 2 \nabla U : \nabla \tilde{U}$, and utilize the identity
	\begin{equation}\label{eq:weakstrongid}
	\int_{\R^3} U(x,t) \cdot \tilde{U}(x,t) \, dx + 2 \int_0^t \int_{\R^3} \nabla U : \nabla \tilde{U} \, dx \, dt' - \int_{\R^3} U(x,0) \cdot W(x,0) \, dx = $$ $$ = \int_0^t \int_{\R^3} (\p_t U - \Delta U) \cdot \tilde{U} + U \cdot (\p_t \tilde{U} - \Delta \tilde{U}) \, dx \, dt'.
\end{equation}
This argument is typical in weak-strong uniqueness proofs. The identity \eqref{eq:weakstrongid} is clear for smooth vector fields with compact support, but it may be applied in more general situations by approximation.
\begin{comment}
	= \int_{\R^3} |W(x,t_2)|^2 + |U(x,t_2)|^2 \, dx \,dt - \int_{\R^3} |W(x,t_1)|^2 + |U(x,t_1)|^2 \, dx \,dt + $$ $$ + 2 \int_{t_1}^{t_2} \int_{\R^3} |\nabla W|^2 + |\nabla U|^2 \, dx \, dt + 2 \int_{\R^3} W \cdot U(x,t_2) - W \cdot U(x,t_1) \, dx + 4 \int_{t_1}^{t_2} \int_{\R^3} \nabla W : \nabla U \, dx \,dt.
\end{equation}
\end{comment}
As in Lemma \ref{stokes}, the energy inequality gives
\begin{equation}
	|D|_{2,Q_{t_1,t_2}}^2 \leq B(t_1,t_2) |D|^2_{2,Q_{t_1,t_2}},
	\label{}
\end{equation}
where we have defined
\begin{equation}
	B(t_1,t_2) := c (\norm{\tilde{U}}_{L^l_t L^q_x(Q_{t_1,t_2})} + \norm{V}_{L^l_t L^q_x(Q_{t_1,t_2})}).
	\label{}
\end{equation}
One then takes $t_2$ close to $t_1$ such that $B(t_1,t_2) \leq 1/2$ to obtain $D \equiv 0$ on $Q_{t_1,t_2}$. The equality $U \equiv \tilde{U}$ may be propagated forward until $\min(T_{\sharp},T^*)$ by repeating the argument (even if $\norm{\tilde{U}}_{L^l_t L^q_x(Q_{t_1,T^*})} = \infty$). This completes the proof.
\end{proof}

Finally, here is a result that contains the limiting arguments we will use in Theorem \ref{main}. Namely, we demonstrate that a weakly converging sequence of initial data has a corresponding subsequence of Calder{\'o}n solutions that converges locally strongly to a solution of the Navier-Stokes equations.

As a reminder, we do not prove that the limit solution is a Calder{\'o}n solution, though we expect such a result to be true. The issue is that the limit solution does not evidently satisfy the energy inequality starting from the initial time.

\begin{thm}[Weak convergence of Calder{\'o}n solutions]\label{weakconv}
	Let $3 < p < \infty$ and $T>0$. Suppose $(u_0^{(n)})_{n \in \N}$ is a sequence of divergence-free vector fields such that
	\begin{equation}
		u_0^{(n)} \wto u_0 \quad \text{ in } \dot B^{s_p}_{p,p}(\R^3).
		\label{}
	\end{equation}
	Then for each $n \in \N$, there exists a Calder{\'o}n solution $u^{(n)}$ on $Q_T$ with initial data $u_0^{(n)}$ and associated pressure $p^{(n)}$ such that the solution $u^{(n)}$ agrees with the mild solution $NS(u_0^{(n)})$ until time $\min(T,T^*(u_0^{(n)}))$.
	
	Furthermore, there exists a distributional solution $(u,p)$ of the Navier-Stokes equations on $Q_T$ with the following properties:
\begin{equation}
u_0 = U_0 + V_0, \quad u = U + V,
\end{equation}
where
\begin{equation}
	U_0 \in L^2(\R^3), \quad V_0 \in \dot B^{s_q+\varepsilon}_{q,q}(\R^3),
	\label{}
\end{equation}
\begin{equation}
	U \in L^\infty_t L^2_x \cap L^2_t \dot H^1_x(Q_T), \quad V \in \cK^{s_q+\varepsilon}_q(Q_T),
	\label{}
\end{equation}
\begin{equation}
	q > p, \quad 0 < \varepsilon < -s_q,
	\label{}
\end{equation}
$V$ is a mild solution of the Navier-Stokes equations on $Q_T$ with initial data $V_0$, and $U$ solves the perturbed Navier-Stokes system \eqref{nselot} in $Q_T$. The vector field $U(\cdot,t)$ is weakly continuous as an $L^2(\R^3)$-valued function on $[0,T]$ and satisfies $U(\cdot,0) = U_0 \in L^2(\R^3)$. The solution $(u,p)$ is suitable in $Q_T$ in the sense of \eqref{localenergyineq}, and $U$ satisfies the local energy inequality \eqref{suitability} in $Q_T$ and the global energy inequality \eqref{globalenergyineq} for almost every $0 < t_1 < T$ and for all $t_2 \in (t_1,T]$. 

Finally, there exists a subsequence, still denoted by $n$, such that the Calder{\'o}n solutions $u^{(n)}$ converge to $u$ in the following senses:
\begin{equation}
u^{(n)} \to u \text{ in } L^3_\loc(\R^3 \times ]0,T]), \quad
		p^{(n)} \wto p \text{ in } L^{3/2}_\loc(\R^3 \times ]0,T]),
\end{equation}
\begin{equation}\label{eq:contconv}
	u^{(n)} \to u \quad \text{ in } C([0,T];\cS'(\R^3)).
\end{equation}
In particular,
\begin{equation}\label{ptwiseest}
	u^{(n)}(\cdot,t) \wstar u(\cdot,t) \quad \text{ in } \cS'(\R^3), \quad 0 \leq t \leq T.
\end{equation}
\end{thm}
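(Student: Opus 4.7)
My plan is to construct each Calderón solution $u^{(n)}=U^{(n)}+V^{(n)}$ via Theorem \ref{mildcalderon}, making the choices uniform in $n$, and then extract convergent subsequences separately for the splittings of the initial data and for the solutions.

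First, the weak convergence $u_0^{(n)}\wto u_0$ in $\dot B^{s_p}_{p,p}$ together with the uniform boundedness principle gives $M:=\sup_n\|u_0^{(n)}\|_{\dot B^{s_p}_{p,p}}<\infty$. Fix $q\in(p,\infty)$ and $\varepsilon\in(0,-s_q)$, let $\gamma=\gamma(q,\varepsilon,T)$ be the small data threshold from Theorem \ref{subcrittheory}, and choose $\lambda>0$ large enough so that $cM^{p/q}\lambda^{1-p/q}\le\gamma$. Applying Lemma \ref{splitdata} with this $\lambda$ to each $u_0^{(n)}$ yields $u_0^{(n)}=U_0^{(n)}+V_0^{(n)}$ with $\sup_n\|U_0^{(n)}\|_{L^2}<\infty$ and $\sup_n\|V_0^{(n)}\|_{\dot B^{s_q+\varepsilon}_{q,q}}\le\gamma$. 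Theorem \ref{mildcalderon} then produces Calderón solutions $u^{(n)}=U^{(n)}+V^{(n)}$ on $Q_T$ (agreeing with the mild solution until $\min(T,T^*(u_0^{(n)}))$) for which $V^{(n)}$ is uniformly bounded in $\cK^{s_q+\varepsilon}_q(Q_T)$ and, via Proposition \ref{energysol} with coefficients $a=b=V^{(n)}$, $U^{(n)}$ is uniformly bounded in the energy space with $\partial_t U^{(n)}$ bounded in $L^2_tH^{-3/2}_x(Q_T)$.

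Next, pass to weakly convergent subsequences $U_0^{(n)}\wto U_0$ in $L^2$ and $V_0^{(n)}\wto V_0$ in $\dot B^{s_q+\varepsilon}_{q,q}$. Since $u_0^{(n)}\wto u_0$, the identity $u_0=U_0+V_0$ holds in $\cS'$ by uniqueness of weak limits. Because the $V_0^{(n)}$ lie in the small-data regime, the contraction argument used in Theorem \ref{subcrittheory} is stable under perturbation of the data, so $V^{(n)}\to V$ in $\cK^{s_q+\varepsilon}_q(Q_T)$ where $V=NS(V_0)$. For $U^{(n)}$, Banach-Alaoglu gives $U^{(n)}\wstar U$ in $L^\infty_tL^2_x$ and $\nabla U^{(n)}\wto\nabla U$ in $L^2(Q_T)$; the uniform bound on $\partial_t U^{(n)}$ together with Aubin-Lions yields $U^{(n)}\to U$ in $L^2(B(R)\times(0,T))$ for every $R>0$, and interpolation with $L^{10/3}(Q_T)$ upgrades this to $L^3_{\loc}(\R^3\times]0,T])$. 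The equicontinuity argument from Proposition \ref{energysol} carries over verbatim and gives weak continuity $U(\cdot,t)$ in $L^2$ and pointwise weak-$\ast$ convergence in $\cS'(\R^3)$, hence \eqref{eq:contconv} and \eqref{ptwiseest}, and in particular $U(\cdot,0)=U_0$. The pressures $P^{(n)}$ defined by the Calderón-Zygmund formulas in Proposition \ref{energysol} are uniformly bounded in $L^2_tL^{3/2}_x+L^2$; passing to a further weak limit and using the Liouville theorem for harmonic functions identifies the limit with the pressure associated to $U$ and $V$. Strong convergence of $U^{(n)}$ in $L^3_{\loc}$ and of $V^{(n)}$ in the Kato space suffices to pass to the limit in every nonlinear term in \eqref{nselot}.

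For suitability and the energy inequalities, the cubic and pressure terms in \eqref{localenergyineq} and \eqref{suitability} pass by strong $L^3_{\loc}\times L^{3/2}_{\loc}$ convergence; the term $2|\nabla U^{(n)}|^2$ is handled exactly as in Step 2 of Proposition \ref{energysol}, writing the weak-$\ast$ limit of $|\nabla U^{(n)}|^2$ in $\cM_{\loc}(Q_T)$ as $|\nabla U|^2+\mu$ with $\mu\ge0$ by lower semicontinuity and absorbing $\mu$ as a non-negative defect measure on the left side. The global energy inequality \eqref{globalenergyineq} for the limit follows by starting from \eqref{globalenergyineq} for $U^{(n)}$ at a.e. $t_1>0$ where $U^{(n)}(\cdot,t_1)\to U(\cdot,t_1)$ strongly in $L^2$ (available on a full-measure set by standard Lebesgue-point/weak-continuity arguments combined with the energy bound), lower semicontinuity of the dissipation, and strong convergence of $V^{(n)}\otimes U^{(n)}$ against $\nabla U^{(n)}$. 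The main obstacle is that this last strong $L^2$ convergence at $t_1=0$ is not available: the splitting $u_0=U_0+V_0$ is obtained from weak limits of $U_0^{(n)},V_0^{(n)}$, and weak $L^2$ convergence of initial data is generally not strong. This is precisely why the global energy inequality is only claimed for a.e.\ $t_1\in(0,T)$ and why the limit $(u,p)$ is not asserted to be a Calderón solution, matching the caveat stated before the theorem.
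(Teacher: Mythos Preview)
Your overall architecture matches the paper's, but there is a genuine gap in the treatment of the mild solutions $V^{(n)}$. You write that ``the contraction argument used in Theorem \ref{subcrittheory} is stable under perturbation of the data, so $V^{(n)}\to V$ in $\cK^{s_q+\varepsilon}_q(Q_T)$.'' The contraction mapping gives Lipschitz dependence in the \emph{strong} topology of the data; it does not upgrade weak convergence of $V_0^{(n)}$ to strong convergence of $V^{(n)}$. Since you only extract $V_0^{(n)}\wto V_0$ weakly in $\dot B^{s_q+\varepsilon}_{q,q}$ (and the splitting map of Lemma \ref{splitdata} is nonlinear, so there is no reason to expect better), the strong Kato convergence $V^{(n)}\to V$ is unjustified. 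This matters downstream: you invoke it to pass to the limit in the cross terms $U^{(n)}\otimes V^{(n)}$, in the identification $V=NS(V_0)$, and in the forcing term of the global energy inequality.

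The paper supplies the missing compactness differently: it records the uniform higher-regularity bounds $\|t^{k+l/2}\partial_t^k\nabla^l V^{(n)}\|_{\cK^{s_q+\varepsilon}_q(Q_T)}\le c(k,l)$ from Theorem \ref{subcrittheory}, applies Ascoli--Arzel\`a to obtain $\partial_t^k\nabla^l V^{(n)}\to\partial_t^k\nabla^l V$ in $C(K)$ for every compact $K\subset\R^3\times(0,T]$, and estimates $\partial_t V^{(n)}$ via the equation to get $C([0,T];\cS')$ convergence. This local-in-spacetime strong convergence (not global Kato convergence) is what makes the nonlinear limits go through. A second, smaller point: your route to the global energy inequality via strong $L^2$ convergence of $U^{(n)}(\cdot,t_1)$ at a.e.\ $t_1$ is not clearly available from local $L^3$ convergence; the paper instead derives \eqref{globalenergyineq} from the local inequality \eqref{suitability} exactly as in Step~3 of Proposition \ref{energysol}, which avoids this issue entirely.
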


\begin{proof}
	\textit{1. Splitting.}
	Let us assume the hypotheses of the theorem. There exists a constant $A>0$ such that
	\begin{equation}
		\norm{u_0^{(n)}}_{\dot B^{s_p}_{p,p}(\R^3)} \leq A.
		\label{}
	\end{equation}
	We will follow the proof of Theorem \ref{existence}. Let $q \in (p,\infty)$. The constants below are independent of $n$ but may depend on $p,q,T$. According to Lemma \ref{splitdata}, there exists $0 < \varepsilon < -s_q$ such that for all $n \in \N$,
	\begin{equation}
		u_0^{(n)} = U_0^{(n)} + V_0^{(n)},
	\end{equation}
	where
	\begin{equation}
		U_0^{(n)} \in L^2(\R^3), \quad \norm{U_0^{(n)}}_{L^2(\R^3)} \leq C(A,M),
		\label{}
	\end{equation}
	\begin{equation}
		V_0^{(n)} \in \dot B^{s_q+\varepsilon}_{q,q}(\R^3), \quad \norm{V_0^{(n)}}_{\dot B^{s_q+\varepsilon}_{q,q}(\R^3)} < M,
		\label{}
	\end{equation}
	According to Theorem \ref{subcrittheory}, we may choose $0 < M \ll 1$ such that the mild solutions $V^{(n)}$ with initial data $V_0^{(n)}$ exist on $Q_T$ and satisfy
	\begin{equation}\label{mildest}
		\norm{t^{k+\frac{l}{2}} \p_t^k \nabla^l V^{(n)}}_{\cK^{s_q+\varepsilon}_{q}(Q_{T})} \leq c(k,l),
\end{equation}
for all integers $k, l \geq 0$.

\textit{2. Convergence of $V^{(n)}$.}
Due to the estimate \eqref{mildest} and the Ascoli-Arzela theorem, we may pass to a subsequence, still denoted by $n$, such that
	\begin{equation}
		V^{(n)} \wstar V\text{ in } \cK^{s_q+\varepsilon}_{q}(Q_{T}), \quad
		\p_t^k \nabla^l V^{(n)} \to \p_t^k \nabla^l V \text{ in } C(K),
	\end{equation}
	for all $K \subset \R^3 \times ]0,T]$ compact and integers $k,l \geq 0$. The Navier-Stokes equations imply
	\begin{equation}\label{rhsv}
	\p_t V^{(n)} = -\Delta V^{(n)} - \bP \div V^{(n)} \otimes V^{(n)}
\end{equation}
in the sense of tempered distributions on $Q_T$, and one may estimate the time derivative $\p_t V^{(n)}$ by the right-hand side of \eqref{rhsv}:
\begin{equation}
	\sup_{n \in \N} \norm{\Delta V^{(n)}}_{L^l_t W^{-2,q}_x(Q_{T})} + \norm{\bP \div V^{(n)} \otimes V^{(n)}}_{L^{\frac{l}{2}}_t W^{-1,\frac{q}{2}}_x(Q_{T})} < \infty,
\end{equation}
where $2/l+3/q=1$. Hence, there exists a subsequence such that
\begin{equation}
	\int_{\R^3} V^{(n)}(x,\cdot) \cdot \varphi \, dx \to \int_{\R^3} V(x,\cdot) \cdot \varphi \,dx \quad  \text{ in } C([0,T]), \quad \varphi \in \cS(\R^3).
\end{equation}
By the Calder{\'o}n-Zygmund estimates and \eqref{mildest}, the associated pressures satisfy
\begin{equation}
	Q^{(n)} := (-\Delta)^{-1} \div \div V^{(n)} \otimes V^{(n)} \wto Q \text{ in } L^{l/2}_t L^{q/2}_x(Q_T).
	\label{}
\end{equation}
In particular, the convergence occurs weakly in $L^{3/2}_\loc(Q_T)$. By similar arguments as in Step 1 of Proposition \ref{energysol}, the pressure satisfies $Q = (-\Delta)^{-1} \div \div V \otimes V$,
and $(V,Q)$ solves the Navier-Stokes equations on $Q_T$ in the sense of distributions. Therefore, since $V \in \cK^{s_q+\varepsilon}_{q}(Q_{T})$ and $V(\cdot,t) \wstar V_0$ in tempered distributions as $t \dto 0$, we conclude that $V$ is the mild solution of the Navier-Stokes equations on $Q_T$ with initial data $V_0$ as in Theorem \ref{subcrittheory}. See \cite{lemarie1}, p. 122, for further remarks on the equivalence between differential and integral forms of the Navier-Stokes equations.

\textit{3. Convergence of $U^{(n)}$.} For each $n \in \N$, let $U^{(n)} \in L^\infty_t L^2_x \cap L^2_t \dot H^1_x(Q_T)$, whose existence is guaranteed by Theorem \ref{mildcalderon}, so that the Calder{\'o}n solution
	\begin{equation}
		u^{(n)} = U^{(n)} + V^{(n)}
	\end{equation}
	agrees with the mild solution $NS(u_0^{(n)})$ up to $\min(T,T^*(u_0^{(n)}))$. It remains to consider the limit of $U^{(n)}$. The proof is very similar to the proofs of Proposition \ref{energysol} and Theorem \ref{existence}, so we will merely summarize what must be done. Recall from Step 1 that $\norm{U_0^{(n)}}_{L^2(\R^3)} \leq C(A,M)$. We use the energy inequality and a Gronwall-type argument to obtain
\begin{equation}
	|U^{(n)}|_{2,Q_T}^2 \leq C(A,M)
\end{equation}
Hence, we may take weak-star limits in the energy space upon passing to a subsequence. As before, we may estimate the time derivative using the perturbed Navier-Stokes equations. The result is that
\begin{equation}
	\norm{\p_t U^{(n)}}_{L^2_t H^{-3/2}_x(Q_{T})} \leq C(A,M).
	\label{}
\end{equation}
Consequently, we may extract a further subsequence such that $U^{(n)} \to U$ in $L^3_t (L^3_\loc)_x(Q_T)$, the limit $U(\cdot,t)$ is weakly continuous on $[0,T]$ as an $L^2(\R^3)$-valued function, and
\begin{equation}
	\int_{\R^3} U^{(n)}(x,\cdot) \cdot \varphi(x) \, dx \to \int_{\R^3} U(x,\cdot) \cdot \varphi(x) \, dx \text{ in } C([0,T]), \quad \varphi \in L^2(\R^3).
\end{equation}
Now recall the associated pressures $P^{(n)} := P_1^{(n)} + P_2^{(n)}$,
\begin{equation}
	P_1^{(n)} := (-\Delta)^{-1} \div \div U^{(n)} \otimes U^{(n)},
	\label{}
\end{equation}
\begin{equation}
	P_2^{(n)} := (-\Delta)^{-1} \div \div (U^{(n)} \otimes V^{(n)} + V^{(n)} \otimes U^{(n)}).
	\label{}
\end{equation}
By the Calder{\'o}n-Zygmund estimates, we pass to a subsequence to obtain
\begin{equation}
	P_1^{(n)} \wto P_1 \text{ in } L^2_t L^{3/2}_x(Q_T), \quad
	P_2^{(n)} \wto P_2 \text{ in } L^2(Q_T),
	\label{}
\end{equation}
\begin{equation}
	P^{(n)} \wto P := P_1 + P_2 \text{ in } L^{3/2}_\loc(Q_T).
\end{equation}
Since $-\Delta P_1 = \div \div U \otimes U$ and $-\Delta P_2 = \div \div (U \otimes V + V \otimes U)$ in the sense of distributions on $Q_T$, we obtain from the Liouville theorem that
\begin{equation}
	P_1 = (-\Delta)^{-1} \div \div U \otimes U, \quad P_2 = (-\Delta)^{-1} \div \div (U \otimes V + V \otimes U).
	\label{}
\end{equation}
Define $p := P + Q$. It is clear from the limiting procedure that $(u,p)$ satisfies the Navier-Stokes equations on $Q_T$ and $(U,P)$ satisfies the perturbed Navier-Stokes equations on $Q_T$. Each is satisfied in the sense of distributions.

As in the proofs of Step 2 in Proposition \ref{energysol} and Step 2 in Theorem \ref{existence}, respectively, we may show that the limit $(u,p)$ is a suitable weak solution of the Navier-Stokes equations on $Q_T$ in the sense of \eqref{localenergyineq} and that $(U,P)$ satisfies the local energy inequality \eqref{suitability} on $Q_T$. Moreover, as in Step 3 of Proposition \ref{energysol}, \eqref{suitability} implies the global energy inequality \eqref{globalenergyineq} for almost every $0 < t_1 < T$ and for all $t_2 \in (t_1,T]$. This completes the proof.
\end{proof}

\section{Proof of main results}

We are ready to prove Theorem \ref{main}. The proof follows the scheme set forth in \cite{sereginl3} except that we use Calder{\'o}n solutions to take the limit of the rescaled solutions.
\begin{proof}[Proof of Theorem \ref{main}]
	Let us assume the hypotheses of the theorem. According to the chain of embeddings \eqref{chain}, we have
	\begin{equation}
		u_0 \in \dot B^{s_p}_{p,q}(\R^3) \into \dot B^{s_m}_{m,m}(\R^3), \quad  m := \max(p,q).
		\label{}
	\end{equation}
	By the uniqueness results in Theorem \ref{mildexist}, the notion of mild solution and maximal time of existence are unchanged by considering the larger homogeneous Besov space. Thus, without loss of generality, we will assume $p=q=m$.

	1. \textit{Rescaling}.
	Let $u$ be the mild solution of the Navier-Stokes equations with divergence-free initial data $u_0 \in \dot B^{s_p}_{p,p}(\R^3)$ and $T^*(u_0) < \infty$ from the statement of the theorem. In Corollary \ref{singularity}, we proved that $u$ must form a singularity at time $T^*(u_0)$. By the translation and scaling symmetries of the Navier-Stokes equations, we may assume that the singularity occurs at the spatial origin and time $T^*(u_0)=1$.
	
	Suppose for contradiction that there exists a sequence $t_n \upto 1$ and constant $M > 0$ such that
	\begin{equation}
		\norm{u(\cdot,t_n)}_{\dot B^{s_p}_{p,p}(\R^3)} \leq M.
		\label{}
	\end{equation}
	The solution $u(\cdot,t)$ is continuous on $[0,1]$ in the sense of tempered distributions (for instance, because $u$ agrees with a Calder{\'o}n solution on $Q_T$). We must have
	\begin{equation}
		\norm{u(\cdot,1)}_{\dot B^{s_p}_{p,p}(\R^3)} \leq M.
		\label{}
	\end{equation}
Let us zoom in around the singularity. For each $n \in \N$, we define
	\begin{equation}
		u^{(n)}(x,t) := \lambda_n u(\lambda_n x, t_n + \lambda_n^2 t), \quad (x,t) \in Q_1,
	\end{equation}
	where $\lambda_n := (1-t_n)^{1/2}$. Then $u^{(n)}$ is the mild solution of the Navier-Stokes equations on $Q_1$ with divergence-free initial data $u_0^{(n)} = \lambda_n u(\lambda_n x,t_n)$, and
	\begin{equation}
		\norm{u_0^{(n)}}_{\dot B^{s_p}_{p,p}(\R^3)} \leq M.
		\label{}
	\end{equation}
Let us pass to a subsequence, still denoted by $n$, such that
\begin{equation}
	u_0^{(n)} \wto v_0 \text{ in } \dot B^{s_p}_{p,p}(\R^3).
	\label{}
\end{equation}

	2. \textit{Limiting procedure}.
	We now apply Theorem \ref{weakconv} to the weakly converging sequence $(u_0^{(n)})_{n \in \N}$. For each $n \in \N$, there exists a Calder{\'o}n solution $u^{(n)}$ on $Q_1$ with initial data $u_0^{(n)}$ such that $u^{(n)}$ agrees with the mild solution $NS(u_0^{(n)})$ on $Q_1$. Furthermore, by passing to a subsequence, still denoted by $n$, we have
	\begin{equation}
		u^{(n)} \to v \text{ in } L^3_\loc(Q_1), \quad p^{(n)} \wto q \text{ in } L^{3/2}_\loc(Q_1),
		\label{}
	\end{equation}
	where $(v,q)$ solves the Navier-Stokes equations in the sense of distributions on $Q_1$ and satisfies the many additional properties listed in Theorem \ref{weakconv}. In particular, $v \in C([0,1];\cS'(\R^3))$. According to Lemma \ref{persist} concerning persistence of the singularity, the solution $v$ also has a singularity at the spatial origin and time $T=1$.

	Next, we observe that the solution $v$ vanishes identically at time $T=1$:
	\begin{equation}\label{vvanish}
		v(\cdot,1) = 0.
	\end{equation}
	Indeed, Theorem \ref{weakconv} implies that
	\begin{equation}
		u^{(n)}(\cdot,1) \wstar v(\cdot,1) \text{ in } \cS'(\R^3),
	\end{equation}
	and by the scaling property of $u(\cdot,1) \in \dot B^{s_p}_{p,p}(\R^3)$,
	\begin{equation}\label{rescalevanish}
		\la u^{(n)}(\cdot,1), \varphi \ra = \la u(\cdot,1), \lambda_n^{-2} \varphi(\cdot/\lambda_n) \ra \to 0, \quad \varphi \in \cS(\R^3).
	\end{equation}
The property \eqref{rescalevanish} is a consequence of the density of Schwartz functions in $\dot B^{s_p}_{p,p}(\R^3)$. It is certainly true with $u(\cdot,1)$ replaced by a Schwartz function $\psi \in \cS(\R^3)$, and therefore,
	\begin{equation}
		|\la u(\cdot,1), \lambda_n^{-2} \varphi(\cdot/\lambda_n) \ra| \leq |\la \psi, \lambda_n^{-2} \varphi(\cdot/\lambda_n) \ra| + |\la u(\cdot,1) - \psi, \lambda_n^{-2} \varphi(\cdot/\lambda_n)| \leq $$ $$
		\leq o(1) + c \norm{u(\cdot,1) - \psi}_{\dot B^{s_p}_{p,p}(\R^3)} \norm{\varphi}_{\dot B^{-s_p}_{p',p'}(\R^3)} \text{ as } n \to \infty,
	\end{equation}
	where $1/p+1/p' = 1$, for all $\varphi \in \cS(\R^3)$.

	3. \textit{Backward uniqueness}.
	Our goal is to demonstrate that
	\begin{equation}\label{goal}
		\omega := \curl v \equiv 0 \text{ on } Q_{1/2,1}.
	\end{equation}
	Suppose temporarily that \eqref{goal} is satisfied.
	From the well-known vector identity $\Delta v = \grad \div v - \curl \curl v$, we obtain
	\begin{equation}
		\Delta v = 0 \text{ on } Q_{1/2,1}.
		\label{}
	\end{equation}
	Now the Liouville theorem for entire harmonic functions and the decomposition
	\begin{equation}\label{eq:vregularity}
		v=U+V \in L^3(Q_1) + L^\infty_t L^q_x(Q_{\delta,1}), \quad 0 < \delta < 1,
	\end{equation}
	from Theorem \ref{weakconv}
	imply that $v \equiv 0 \text{ on }Q_{1/2,1}$.
	This contradicts that $v$ is singular at time $T=1$ and finishes the proof.

	We will now prove \eqref{goal}. Based on \eqref{eq:vregularity} and
	\begin{equation}
		q=P+Q \in L^{3/2}(Q_1) + L^\infty_t L^{q/2}_x(Q_{\delta,1}) , \quad 0 < \delta < 1,
		\label{}
	\end{equation}
	we have
\begin{equation}
	\lim_{|x| \to \infty} \int_{1/4}^{1} \int_{B(x,1)} |v|^3 + |q|^{3/2} \, dx \,dt = 0.
	\label{}
\end{equation}
Recall from Theorem \ref{weakconv} that $(v,q)$ obeys the local energy inequality \eqref{localenergyineq}, so
% by the remarks preceding Theorem \ref{ckncrit},
it is a suitable weak solution on $Q_1$. The $\varepsilon$-regularity criterion in Theorem \ref{ckncrit} implies that there exist constants $R, \kappa > 0$, $K := (\R^3 \setminus B(R)) \times (1/2,1)$, such that
\begin{equation}
	\sup_K |v| + |\nabla v| + |\nabla^2 v| < \kappa.
\end{equation}
Recall the equation satisfied by the vorticity: $\p_t \omega - \Delta \omega = -\curl (v \nabla v)$. This implies
	\begin{equation}\label{diffineq}
		|\p_t \omega - \Delta \omega| \leq c (|\nabla \omega| + |\omega|) \quad \text{ in } K
\end{equation}
for a constant $c>0$ depending on $\kappa$. Also, $w(\cdot,1) = 0$ due to \eqref{vvanish}.
Now, according to Theorem \ref{backuniqueness} concerning backward uniqueness, $\omega \equiv 0$ in $K$.

It remains to demonstrate that $\omega \equiv 0$ in $\overline{B(R)} \times (1/2,1)$. We claim that there exists a dense open set $G \subset (0,1)$ such that $v$ is smooth on $\Omega := \R^3 \times G$.
With the claim in hand, let $z_0 = (x_0,t_0) \in \Omega \cap K$ such that $|x_0| = 2R$. Note that $\omega \equiv 0$ in a neighborhood of $z_0$. In addition, by the smoothness of $v$, there exist $0 < \varepsilon \ll 1$ and $c>0$ depending on $z_0$ such that
\begin{equation}
	|\p_t \omega - \Delta \omega| \leq c(|\nabla \omega| + |\omega|) \text{ in } Q := B(x_0,4R) \times (t_0-\varepsilon,t_0+\varepsilon) \subset \Omega.
	\label{}
\end{equation}
Hence, the assumptions of Theorem \ref{continuation} concerning unique continuation are satisfied in $Q$, and $\omega \equiv 0$ in $Q$. This implies that $\omega \equiv 0$ in $\R^3 \times (t_0-\varepsilon,t_0+\varepsilon)$. Since $z_0 \in \Omega \cap K$ was arbitrary, we obtain that $\omega \equiv 0$ in $\Omega$. Now the density of $G$ and weak continuity $v \in C([0,1];\cS'(\R^3))$ imply $\omega \equiv 0$ on $Q_{1/2,1}$.

Finally, we prove the claim that there exists a dense open set $G \subset (0,1)$ such that $v$ is smooth on $\R^3 \times G$. Recall from Theorem \ref{weakconv} that $v = U + V$, where $V$ is the smooth mild solution in $Q_1$ from Theorem \ref{subcrittheory} with initial data $V_0$. To treat $U$, consider the set $\Pi$ of times $t_1 \in (0,1)$ such that $U(\cdot,t_1) \in H^1(\R^3)$ and $U$ satisfies the global energy inequality \eqref{globalenergyineq} for all $t_2 \in (t_1,1]$. The latter condition ensures that 
\begin{equation}
	\lim_{t \dto t_1} \norm{U(\cdot,t) - U(\cdot,t_1)}_{L^2(\R^3)} = 0.
	\label{}
\end{equation} The set $\Pi$ has full measure in $(0,1)$.
For each $t_1 \in \Pi$, there exist $\varepsilon := \varepsilon(t_1) > 0$, a vector field $\tilde{U} \in C([t_1,t_1+\varepsilon];H^1(\R^3))$, and pressure $\tilde{P} \in L^\infty_t L^3_x(Q_{t_1,t_1+\varepsilon})$ such that
\begin{equation}\label{Utildeobey}
	\p_t \tilde{U} - \Delta \tilde{U} + \div \tilde{U} \otimes \tilde{U} + \div V \otimes \tilde{U} + \div \tilde{U} \otimes V = - \nabla \tilde{P}, \quad \div \tilde{U} = 0,
\end{equation}
in the sense of distributions on $Q_{t_1,t_1+\varepsilon}$. Furthermore, $\tilde{U}$ is smooth on $Q_{t_1,t_1+\varepsilon}$. This may be proven by developing the local well-posedness theory for the integral equation
\begin{equation}
	\tilde{U}(\cdot,t) = e^{(t-t_1)\Delta} U(\cdot,t_1) - \int_{t_1}^t e^{(t-s) \Delta} \bP \div (\tilde{U} \otimes \tilde{U} + V \otimes \tilde{U} + \tilde{U} \otimes V) \, ds
	\label{}
\end{equation}
with $U(\cdot,t_1) \in H^1(\R^3)$. One must use that $\p_t^k \nabla^l V \in C([\delta,1];L^\infty(\R^3))$ for all $0 < \delta < 1$ and integers $k,l \geq 0$. By weak-strong uniqueness for the equation \eqref{Utildeobey}, as in Step 4 in Theorem \ref{mildcalderon}, we must have that $U \equiv \tilde{U}$ on $Q_{t_1,t_1+\varepsilon}$. Hence, $U$ is smooth on $Q_{t_1,t_1+\varepsilon}$. We define the dense open set $G \subset (0,1)$ as follows:
\begin{equation}
	G := \bigcup_{t_1 \in \Pi} (t_1,t_1+\varepsilon(t_1)), \quad \overline{G} \supseteq \overline{\Pi} = [0,1],
	\label{}
\end{equation}
This completes the proof of the claim and Theorem \ref{main}.

	%4. \textbf{Unique continuation}.
	%Next, we will apply Theorem \ref{continuation} to 
	
	%Let us write $w = P_k + U$ as a global weak Besov solution. \emph{Will have to change that, as well.} The Picard iterate $P_k$ corresponding to $w_0$ is continuous in $L^p(\R^3)$ away from the initial time. Likewise, by the classical theory of Leray solutions, $U$ is smooth at every time except for a closed set of Hausdorff dimension $1/2$ ($U$ solves the perturbed Navier-Stokes, but the same result holds). This is enough to apply the unique continuation theorem and conclude that $\omega \equiv 0$ on almost every time slice, as desired.
\end{proof}

\section{Appendix}
This appendix has been written with two goals in mind.

The first goal is to unify two well-known approaches to mild solutions with initial data in the critical homogeneous Besov spaces. Mild solutions in the time-space homogeneous Besov spaces were considered in the blow-up criterion of Gallagher-Koch-Planchon \cite{koch}, whereas mild solutions in Kato spaces are better suited to our own needs. In Theorem \ref{mildexist}, we prove that the two notions of solution coincide and have the same maximal time of existence. Section 4.3 contains the proof, while Sections 4.1--4.2 are devoted to background material on Littlewood-Paley theory and Besov spaces. We also address the subcritical theory in Kato spaces in Theorem \ref{subcrittheory}. Our hope is that the details provided herein will equip the reader to fill in any fixed point arguments we have omitted in Sections 2-3.

The second goal is to collect various results related to the theory of suitable weak solutions. While the blow-up arguments we have presented are not overly complicated, they do rely on technical machinery developed by a number of authors starting in the 1970s. We summarize the theory in Section 4.4 and refer to Escauriaza-Seregin-{\u S}ver{\'a}k \cite{sverak03} for additional details.

\begin{comment}
 they do rely on a number of technical tools which we aim to summarize below.
Sections 4.1-4.3 develop the well-posedness theory of the Navier-Stokes equations with initial data in Besov spaces. Section 4.1 is devoted to simply defining the spaces in question, which we do using Littlewood-Paley theory. Our main resource for the Littlewood-Paley theory is the textbook \cite{bahourichemin}, and a second perspective based on real interpolation is provided in \cite{lemarie1}. Section 4.2 addresses the estimates for the linearized equations with initial data in Besov spaces. The estimates in time-space Besov spaces are covered in the resouces \cite{gallagher,koch,bahourichemin}, while we provide higher regularity estimates in Kato spaces. Section 4.3 reproves the critical well-posedness theory in time-space Besov spaces and in Kato spaces. Most importantly, it verifies that the two approaches to mild solutions with initial data in Besov spaces coincide. While Kato spaces are more convenient for the present situation, the previous blow-up criterion of Gallagher, Koch, and Planchon \cite{koch} utilizes time-space Besov spaces. In this section, we also prove that mild solutions form a singularity if the maximal time of existence is finite, and we establish the well-posedness theory in subcritical Kato spaces.

Section 4.4 is devoted to the theory of suitable weak solutions of the Navier-Stokes equations.  . Their paper remains an excellent resource, and we will not reproduce the proofs.
\end{comment}

\subsection{Littlewood-Paley theory and homogeneous Besov spaces}

%Our primary references are \cite{bahourichemin,lemarie1,gallagher,koch}.

We will now summarize the basics of Littlewood-Paley theory and homogeneous Besov spaces. Our treatment is based on the presentation in \cite{bahourichemin}, Chapter 2. The situation is as follows. There exist smooth functions $\varphi$ and $\chi$ on $\R^3$ with the properties
\begin{equation}
	\supp(\varphi) \subset \{ \xi \in \R^3 : 3/4 \leq |\xi| \leq 8/3 \}, \qquad \sum_{j \in \Z} \varphi(2^{-j} \xi) = 1, \, \xi \in \R^3 \setminus \{ 0 \},
\end{equation}
\begin{equation}
	\supp(\chi) \subset \{ \xi \in \R^3 : |\xi| \leq 4/3 \}, \qquad \chi(\xi) + \sum_{j \geq 0} \varphi(2^{-j} \xi) = 1, \, \xi \in \R^3.
\end{equation}
For all $j \in \Z$, we define the homogeneous dyadic block $\dot \Delta_j$ and the homogeneous low-frequency cutoff $\dot S_j$ to be the following Fourier multipliers:
\begin{equation}\label{eq:dyadicblockdef}
	\dot \Delta_j := \varphi(2^{-j} D), \quad \dot S_j := \chi(2^{-j} D).
\end{equation}
For tempered distributions $u_0$ on $\R^3$, the convergence of the sum $\sum_{j \leq 0} \dot \Delta_j u_0$ typically occurs only in the sense of tempered distributions modulo polynomials (see p. 28-30 in \cite{lemarie1}). To remove ambiguity, we will consider the following subspace of tempered distributions on $\R^3$:
\begin{equation}\label{eq:tempereddecay}
	\cS_h' := \big\lbrace u_0 \in \cS' : \lim_{\lambda \to \infty} \norm{\theta(\lambda D) u_0}_{L^\infty(\R^3)} = 0 \text{ for all } \theta \in \cD(\R^3) \big\rbrace.
\end{equation}
The subspace $\cS_h'$ is not closed in the standard topology on tempered distributions. We will often refer to the condition defining \eqref{eq:tempereddecay} as the ``realization condition.'' Let us recall the family of homogeneous Besov seminorms, defined for all tempered distributions $u_0$ on $\R^3$:
\begin{equation}\label{eq:besovnorm}
	\norm{u_0}_{\dot B^{s}_{p,q}(\R^3)} := \big\lVert 2^{js} \norm{\dot \Delta_j u_0}_{L^p(\R^3)} \big\rVert_{\ell^q(\Z)}, \quad s \in \R, \, 1 \leq p,q \leq \infty.
\end{equation}
These are norms when restricted to tempered distributions in the class $u_0 \in \cS'_h$. We now introduce the family of homogeneous Besov spaces,
\begin{equation}\label{eq:besovdef}
	\dot B^s_{p,q}(\R^3) := \big\lbrace u_0 \in \cS_h' : \norm{u_0}_{\dot B^{s}_{p,q}(\R^3)} < \infty \big\rbrace, \quad s \in \R, \, 1 \leq p, q \leq \infty.
\end{equation}
This is a family of normed vector spaces. As long as the condition
\begin{equation}\label{eq:indexrestriction}
	s < 3/p \quad \text{ or } \quad s=3/p, \, q=1
\end{equation}
is satisfied, $\dot B^{s}_{p,q} \cap \dot B^{s_1}_{p_1,q_1}(\R^3)$ is a Banach space for all $s_1 \in \R$ and $1 \leq p_1, q_1 \leq \infty$, and there is no ambiguity modulo polynomials.
%This is because \eqref{eq:indexrestriction} ensures that $\sum_{j \in \Z} \dot \Delta_j u_0$ converges to $u_0$ in the sense of tempered distributions for all $u_0 \in \dot B^s_{p,q}(\R^3)$, so there will be no ambiguity with regard to polynomials.

Let us now recall a particularly useful characterization of homogeneous Besov spaces in terms of Kato-type norms. Our reference is \cite{bahourichemin} Theorem 2.34. Let $0 < T \leq \infty$. The following family of norms is defined for locally integrable functions $u \in L^1_\loc(Q_T)$:
\begin{equation}
	\norm{u}_{\cK^s_{p,q}}(Q_T) := \big\lVert t^{-\frac{s}{2}} \norm{u(\cdot,t)}_{L^p(\R^3)} \big\rVert_{L^q((0,T),\frac{dt}{t})}, \quad s \in \R, \, 1 \leq p, q \leq \infty.
	\label{}
\end{equation}
We now define the Kato spaces with the above norms:
\begin{equation}\label{katospacedef}
	\cK^s_{p,q}(Q_T) := \left\lbrace u \in L^1_\loc(Q_T) : \norm{u}_{\cK^s_{p,q}(Q_T)} < \infty \right\rbrace.
\end{equation}
To simplify notation, we will denote
\begin{equation}
	\cK^s_p(Q_T) := \cK^s_{p,\infty}(Q_T), \quad \cK_p(Q_T) := \cK^{s_p}_p(Q_T).
	\label{}
\end{equation}
The caloric characterization of homogeneous Besov spaces is as follows.
For all $s<0$, there exists a constant $c := c(s) > 0$ such that
\begin{equation}\label{katoendup}
	c^{-1} \norm{e^{t\Delta} u_0}_{\cK^{s}_{p,q}(Q_\infty)} \leq \norm{u_0}_{\dot B^{s}_{p,q}(\R^3)} \leq c \norm{e^{t\Delta} u_0}_{\cK^{s}_{p,q}(Q_\infty)}
\end{equation}
for all tempered distributions $u_0$ on $\R^3$.

We now introduce the time-space homogeneous Besov spaces that appear naturally when solving the Navier-Stokes equations with initial data in homogeneous Besov spaces. Our presentation follows \cite{bahourichemin} Section 2.6.3. Let $0 < T \leq \infty$. We have the following family of seminorms on tempered distributions $u \in \cS'(Q_{T})$:
\begin{equation}
	\norm{u}_{\tilde{L}^r_{T} \dot B^{s}_{p,q}} := \big\lVert 2^{js} \norm{\Delta_j u_0}_{L^r_t L^p_x(Q_{T})} \big\rVert_{\ell^q(\Z)}, \quad s \in \R, \, 1 \leq p,q,r \leq \infty.
\end{equation}
The time-space homogeneous Besov spaces on $Q_{T}$ are defined below:
\begin{equation}\label{eq:timespacedef}
	\tilde{L}^r_{T} \dot B^{s}_{p,q} := \big\lbrace u \in \cS'(Q_{T}) : \norm{u}_{\tilde{L}^r_T \dot B^{s}_{p,q}} < \infty, \; \lim_{j \dto -\infty} \norm{\dot S_j u}_{L^1_t L^\infty_x(Q_{T_1,T_2})} = 0,
	$$ $$  \text{for all } 0 < T_1 < T_2 < T \big\rbrace, \qquad
s \in \R, \, 1 \leq p, q,r \leq \infty.
\end{equation}
The second condition in \eqref{eq:timespacedef} is analogous to the realization condition \eqref{eq:tempereddecay}. We have that $\tilde{L}^r_{T} \dot B^{s}_{p,q} \cap\tilde{L}^{r_1}_{T} \dot B^{s_1}_{p_1,q_1}$ is a Banach space for all $s_1 \in \R$ and $1 \leq r_1, p_1, q_1 \leq \infty$ provided that \eqref{eq:indexrestriction} is satisfied. To simplify notation, we will omit the reference to $T$ in the norm when $T=\infty$. We also sometimes employ spaces $\tilde{L}^r_{\delta,T} \dot B^{s}_{p,q}$ on the spacetime domain $Q_{\delta,T}$ that are defined in the same way.

We now review the Bony paraproduct decomposition as described in \cite{bahourichemin} Section 2.6. Consider the operators
\begin{equation}\label{eq:paraproductdef}
	\dot T_u v := \sum_{j \in \Z} \dot S_{j-1} u \, \dot \Delta_j v, \quad
	\dot R(u,v) := \sum_{|j-j'|\leq 1} \dot \Delta_j u \, \dot \Delta_{j'} v,
\end{equation}
defined formally for all tempered distributions $u, v$ on $\R^3$. These operators represent low-high and high-high interactions in the formal product
\begin{equation}\label{eq:paraproduct}
	uv = \dot T_u v + \dot T_v u + \dot R(u,v).
\end{equation}
If the sums defining the paraproduct operators converge, one may use \eqref{eq:paraproduct} to extend the notion of product to a wider class. Consider
\begin{equation}
	1 \leq p,p_1,p_2,q,q_1,q_2 \leq \infty, \quad s, s_1, s_2 \in \R, $$ $$
	\frac{1}{p} = \frac{1}{p_1} + \frac{1}{p_2}, \quad \frac{1}{q} = \frac{1}{q_1} + \frac{1}{q_2}, \quad s = s_1+s_2.
\end{equation}
Then one has the estimates
\begin{equation}
	\norm{\dot T_u v}_{\dot B^{s}_{p,q}(\R^3)} \leq c(s,s_1) \norm{u}_{\dot B^{s_1}_{p_1,q_1}(\R^3)} \norm{v}_{\dot B^{s_2}_{p_2,q_2}(\R^3)}, \quad (\text{provided } s_1 < 0),
	\label{}
\end{equation}
\begin{equation}
	\norm{\dot R(u,v)}_{\dot B^{s}_{p,q}(\R^3)} \leq c(s) \norm{u}_{\dot B^{s_1}_{p_1,q_1}(\R^3)} \norm{v}_{\dot B^{s_2}_{p_2,q_2}(\R^3)}, \quad (\text{provided } s > 0).
	\label{}
\end{equation}
The additional condition \eqref{eq:indexrestriction} will imply that $\dot T_u v, \dot R(u,v) \in \cS'_h$.
The analogous estimates in time-space homogeneous Besov norms are as follows:
\begin{equation}\label{eq:lowhighest}
	\norm{\dot T_u v}_{\tilde L^r_T \dot B^{s}_{p,q}} \leq c(s,s_1) \norm{u}_{\tilde L^{r_1}_T \dot B^{s_1}_{p_1,q_1}} \norm{v}_{\tilde L^{r_2}_T \dot B^{s_2}_{p_2,q_2}}, \quad (\text{provided } s_1 < 0),
\end{equation}
\begin{equation}\label{eq:highhighest}
	\norm{\dot R(u,v)}_{\tilde L^r_T \dot B^{s}_{p,q}} \leq c(s) \norm{u}_{\tilde L^{r_1}_T \dot B^{s_1}_{p_1,q_1}} \norm{v}_{\tilde L^{r_2}_T \dot B^{s_2}_{p_2,q_2}}, \quad (\text{provided } s > 0),
\end{equation}
where
\begin{equation}
	1 \leq r,r_1,r_2 \leq \infty, \quad \frac{1}{r} = \frac{1}{r_1} + \frac{1}{r_2}, \quad 0 < T \leq \infty.
	\label{}
\end{equation}
The paraproduct decomposition will play a crucial role in proving estimates on the nonlinear term.

\subsection{Heat estimates in homogeneous Besov spaces}
In this subsection, we recall estimates for the heat equation in time-space homogeneous Besov spaces and Kato spaces.

Regarding heat estimates of frequency-localized data, the primary observation is the following, see \cite{bahourichemin} Section 2.1.2 as well as the appendix of \cite{koch}. Let $\cC \subset \R^3$ be an annulus and $\lambda > 0$. There exist constants $C,c >0$ depending only on the annulus $\cC$ such that for all tempered distributions $u_0$ satisfying $\supp(\widehat{u_0}) \subset \lambda \cC$,
\begin{equation}\label{eq:freqlocalizedheat}
	\norm{e^{t\Delta} u_0}_{L^p(\R^3)} \leq Ce^{-ct\lambda^2} \norm{u_0}_{L^p(\R^3)}, \quad 1 \leq p \leq \infty,
\end{equation}
Let $0 < T \leq \infty$ and $f$ be a tempered distribution on $Q_T$ with spatial Fourier transform satisfying $\supp{\widehat{f}} \subset \lambda \cC \times [0,T]$. There exists a constant $C  > 0$, depending only on the annulus $\cC$, such that
\begin{equation}
	\norm{e^{t\Delta} u_0}_{L^{r_2}_t L^{p_2}_x(Q_\infty)} \leq C \lambda^{-\frac{2}{r_2}} \lambda^{3 (\frac{1}{p_1} - \frac{1}{p_2})}  \norm{u_0}_{L^{p_1}(\R^3)},
	\label{}
\end{equation}
\begin{equation}
	\norm{\int_0^t e^{(t-s)\Delta} f(\cdot,s) \, ds}_{L^{r_2}_t L^{p_2}_x(Q_T)} \leq C \lambda^{2(\frac{1}{r_1} - \frac{1}{r_2} - 1)} \lambda^{3(\frac{1}{p_1}-\frac{1}{p_2})} \norm{f}_{L^{r_1}_t L^{p_1}_x(Q_T)},
	\label{}
\end{equation}
\begin{equation}
	1 \leq p_1 \leq p_2 \leq \infty, \quad 1 \leq r_1 \leq r_2 \leq \infty.
	\label{}
\end{equation}
The estimates follow from \eqref{eq:freqlocalizedheat} by Bernstein's inequality and Young's convolution inequality. As an application, we obtain
\begin{equation}\label{eq:heatestbesovu0}
	\norm{e^{t\Delta} u_0}_{\tilde{L}^{r_2} \dot B^{s_{p_2}+\frac{2}{r_2}}_{p_2,q_2}} \leq C \norm{u_0}_{\dot B^{s_{p_1}}_{p_1,q_1}(\R^3)},
\end{equation}
\begin{equation}\label{eq:heatestbesovf}
	\norm{\int_0^t e^{(t-s)\Delta} \bP \div F(\cdot,s) \, ds}_{\tilde{L}^{r_2}_T \dot B^{s_{p_2}+\frac{2}{r_2}}_{p_2,q_2}} \leq C \norm{F}_{\tilde{L}^{r_1}_T \dot B^{s_{p_1}+\frac{2}{r_1}-1}_{p_1,q_1}},
\end{equation}
\begin{equation}
	1 \leq p_1 \leq p_2 \leq \infty, \quad 1 \leq q_1 \leq q_2 \leq \infty, \quad 1 \leq r_1 \leq r_2 \leq \infty.
	\label{}
\end{equation}
Here, we have employed that $\bP$ is a homogeneous Fourier multiplier of degree zero smooth away from the origin, see Proposition 2.40 in \cite{bahourichemin}. Let us now comment on continuity in time. Regarding the estimates \eqref{eq:heatestbesovu0} and \eqref{eq:heatestbesovf}, the solutions belong to the class $C([0,T];\dot B^{s_{p_2}}_{p_2,q_2}(\R^3))$ as long as $r_1, q_1 < \infty$ and the realization condition in \eqref{eq:timespacedef} is met. For example, the realization condition is met whenever $(s_{p_2}+\frac{2}{r_2},p_2,q_2)$ satisfies \eqref{eq:indexrestriction}. Because the mild solutions we seek exist in spaces $\tilde{L}^\infty_T \dot B^{s_p}_{p,q}$ with $3 < p < \infty$, realization will often be automatic.

The second set of estimates we will discuss are the estimates in Kato spaces that arise naturally from the caloric characterization \eqref{katoendup} of homogeneous Besov spaces. We summarize them in a single lemma:
\begin{lem}[Estimates in Kato spaces]\label{katoest}
	Let $0 < T \leq \infty$ and $1 \leq p_1 \leq p_2 \leq \infty$ such that
	\begin{equation}\label{eq:Oseenscaling}
		s_2 - \frac{3}{p_2} = 1 + s_1 - \frac{3}{p_1}.
	\end{equation}
	In addition, assume the conditions
	\begin{equation}\label{eq:Oseenconds}
		s_1 > -2, \qquad \frac{3}{p_1} - \frac{3}{p_2} < 1.
	\end{equation}
	(For instance, if $p_2 = \infty$, then the latter condition is satisfied when $p_1 > 3$. If $p_1 = 2$, then the latter condition is satisfied when $p_2 < 6$.) Then
	\begin{equation}\label{eq:Oseenestimate}
		%\p_t^\alpha \nabla^\beta 
	\norm{\int_0^t  e^{(t-\tau) \Delta} \bP \div F(\cdot,\tau) \, d\tau}_{\cK^{s_2}_{p_2}(Q_T)} \leq C(s_1,p_1,p_2) \norm{F}_{\cK^{s_1}_{p_1}(Q_T)},
\end{equation}
%\p_t^\alpha \nabla^\beta 
for all distributions $F \in \cK^{s_1}_{p_1}(Q_T)$, and the solution $u$ to the corresponding heat equation belongs to $C((0,T];L^{p_2}(\R^3))$. Let $k, l \geq 0$ be integers. If we further require that
\begin{equation}
	t^{\alpha+\frac{|\beta|}{2}} \p_t^\alpha \nabla^\beta  F \in \cK^{s_1}_{p_1}(Q_T),
	\label{}
\end{equation}
for all integers $0 \leq \alpha \leq k$ and multi-indices $\beta \in (\N_0)^3$ with $|\beta| \leq l$,
then we have
\begin{equation}
	\norm{t^{k+\frac{l}{2}} \p_t^k \nabla^l \int_0^t  e^{(t-\tau) \Delta} \bP \div F(\cdot,\tau) \, d\tau}_{\cK^{s_2}_{p_2}(Q_T)} \leq $$ $$
	\leq C(k,l,s_1,p_1,p_2) \Big( \sum_{\alpha=0}^k \sum_{\beta=0}^l \norm{t^{\alpha+\frac{|\beta|}{2}} F}_{\cK^{s_1}_{p_1}(Q_T)} \Big),
\end{equation}
and the spacetime derivatives $\p_t^k \nabla^l u$ of the solution $u$ belong to $C((0,T];L^{p_2}(\R^3))$.
\end{lem}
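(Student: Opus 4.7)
\medskip

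The plan is to reduce everything to a pointwise-in-time Young inequality for the Oseen operator $e^{t\Delta}\bP\div$. Its kernel $K(x,t)$ is smooth, divergence-free as a convolution operator on each slice, and self-similar: $K(x,t) = t^{-2}K_1(x/\sqrt{t})$ with $K_1 \in \cS(\R^3)$ decaying like $|y|^{-4}$. Consequently, by Young's convolution inequality in the form $\|K(\cdot,t)\ast g\|_{L^{p_2}}\le \|K(\cdot,t)\|_{L^r}\|g\|_{L^{p_1}}$ with $1/r = 1+1/p_2-1/p_1$, scaling gives
\begin{equation}\label{eq:basicheatest}
\bigl\|e^{t\Delta}\bP\div g\bigr\|_{L^{p_2}(\R^3)}\;\le\; C\, t^{-\alpha}\,\|g\|_{L^{p_1}(\R^3)},\qquad \alpha:=\tfrac{1}{2}+\tfrac{3}{2}\!\left(\tfrac{1}{p_1}-\tfrac{1}{p_2}\right),
\end{equation}
for all $1\le p_1\le p_2\le\infty$. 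The assumption $3/p_1-3/p_2<1$ is precisely the requirement that $\alpha<1$.

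Applying \eqref{eq:basicheatest} inside Duhamel and bounding $\|F(\cdot,\tau)\|_{L^{p_1}}\le \|F\|_{\cK^{s_1}_{p_1}(Q_T)}\,\tau^{s_1/2}$ yields
\begin{equation}
\bigl\|u(\cdot,t)\bigr\|_{L^{p_2}(\R^3)}\le C\|F\|_{\cK^{s_1}_{p_1}(Q_T)}\int_0^t (t-\tau)^{-\alpha}\tau^{s_1/2}\,d\tau \;=\;C\,B\!\left(1-\alpha,\tfrac{s_1}{2}+1\right)\|F\|_{\cK^{s_1}_{p_1}(Q_T)}\,t^{1-\alpha+s_1/2},
\end{equation}
where the Beta integral converges precisely because $\alpha<1$ and $s_1>-2$. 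A direct computation using the scaling identity \eqref{eq:Oseenscaling} gives $1-\alpha+s_1/2=s_2/2$, so multiplying by $t^{-s_2/2}$ and taking the supremum over $t\in (0,T)$ produces \eqref{eq:Oseenestimate}. Continuity $u\in C((0,T];L^{p_2}(\R^3))$ is obtained by first proving it for $F\in C_c^\infty(\R^3\times (0,T))$ (where it is a textbook property of the smooth heat semigroup) and then invoking the already established estimate \eqref{eq:Oseenestimate} to pass to the $\cK^{s_1}_{p_1}$ limit.

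For the weighted derivative bounds, I would argue inductively in $k+l$. The clean way is to split the Duhamel integral at $t/2$: on $[t/2,t]$ the factor $\tau^{\alpha_0+|\beta|_0/2}$ on $F$ is comparable to $t^{\alpha_0+|\beta|_0/2}$, so only the $(t-\tau)$-singularity matters, and each spatial derivative on $e^{(t-\tau)\Delta}\bP\div$ costs an additional factor of $(t-\tau)^{-1/2}$ via \eqref{eq:freqlocalizedheat}-type estimates (or by differentiating the Oseen kernel), while each $\p_t$ converts via the heat equation $\p_t e^{(t-\tau)\Delta}=\Delta e^{(t-\tau)\Delta}$ into two spatial derivatives. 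On $[0,t/2]$ one instead commutes the derivatives onto $e^{(t-\tau)\Delta}$ and writes them as $\p_t^k\nabla^l e^{(t-\tau)\Delta}\bP\div F=e^{(t-\tau)/2\cdot\Delta}\bigl(\p_t^k\nabla^l e^{(t-\tau)/2\cdot\Delta}\bP\div\bigr)F$, so that the derivatives act on a time at least $t/4$ away and the total weight $t^{k+l/2}$ absorbs the extra singularity. Summing the two pieces and using the previously established base case closes the induction.

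The main technical obstacle is keeping track of the borderline scaling: the exponents line up exactly because of \eqref{eq:Oseenscaling}, and the Beta integral converges exactly on the open set $\{\alpha<1,\,s_1>-2\}$ carved out by \eqref{eq:Oseenconds}. Nothing else in the proof is delicate—the $\bP$ factor is absorbed into constants since it is a zero-order Fourier multiplier smooth away from the origin, and continuity in time at the endpoints follows from standard density of $C_c^\infty$ in Kato spaces $\cK^{s_1}_{p_1,q}$ for finite $q$, or by the dyadic decomposition argument above for $q=\infty$.
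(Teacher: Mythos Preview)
Your base case is correct and essentially identical to the paper's argument: Oseen kernel bound \eqref{eq:basicheatest}, then a Beta integral converging exactly under \eqref{eq:Oseenconds}, with the exponent collapsing to $s_2/2$ via \eqref{eq:Oseenscaling}. The spatial-derivative step, splitting at $t/2$ and placing $\nabla^l$ on the kernel for $\tau\in[0,t/2]$ and on $F$ for $\tau\in[t/2,t]$, is also the paper's argument.

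There are two genuine gaps, however.

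\emph{Continuity.} The space in the lemma is $\cK^{s_1}_{p_1}=\cK^{s_1}_{p_1,\infty}$, and $C^\infty_c$ is \emph{not} dense there (same obstruction as in $L^\infty$). Your fallback ``dyadic decomposition argument above'' is not actually present in your write-up. The paper instead proves continuity directly from the semigroup identity $u(\cdot,t)=e^{(t-s)\Delta}u(\cdot,s)+\int_s^t e^{(t-\tau)\Delta}\bP\div F\,d\tau$ and estimates $\|u(\cdot,t)-u(\cdot,s)\|_{L^{p_2}}$ term by term; this works for every $F\in\cK^{s_1}_{p_1}$ without any approximation.

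\emph{Time derivatives.} This is the more serious problem. On the near-diagonal piece $[t/2,t]$ you propose to convert $\p_t$ into $\Delta$ on the kernel, but that produces an extra $(t-\tau)^{-1}$ and the resulting singularity $(t-\tau)^{-\alpha-1}$ is \emph{not} integrable near $\tau=t$ (recall $\alpha>0$). You also ignore the boundary term: differentiating $\int_0^t(\cdots)\,d\tau$ in $t$ yields $\bP\div F(\cdot,t)$, which lives only in $W^{-1,p_1}$ and cannot be placed directly in $L^{p_2}$. The paper resolves both issues simultaneously: starting from the identity $u(\cdot,t)=e^{(t-s)\Delta}u(\cdot,s)+\int_s^t e^{(t-\tau)\Delta}\bP\div F\,d\tau$ and differentiating in $t$, one integrates by parts in $\tau$ so that the bad boundary term at $\tau=t$ cancels, leaving
\[
\p_t u(\cdot,t)=\p_t e^{(t-s)\Delta}u(\cdot,s)+e^{(t-s)\Delta}\bP\div F(\cdot,s)+\int_s^t e^{(t-\tau)\Delta}\bP\div\,\p_\tau F(\cdot,\tau)\,d\tau,
\]
and then sets $s=t/2$. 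Now each term is estimable: the first two use $t-s=t/2$, and the third is the base-case estimate applied to $\p_\tau F$ with the hypothesis $\tau\,\p_\tau F\in\cK^{s_1}_{p_1}$. Higher $\p_t^k$ are handled by iterating this identity. Without this integration-by-parts step (or an equivalent device transferring $\p_t$ onto $F$ near the diagonal), your inductive scheme for time derivatives does not close.
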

\begin{proof}
	Recall the following estimates on the Oseen kernel (Chapter 11 of \cite{lemarie1}). Let $\alpha \geq 0$ be an integer and $\beta \in (\N_0)^3$ be a multi-index. Then
	\begin{equation}\label{eq:Oseennormbounds}
		\norm{\p_t^\alpha \nabla^\beta e^{t\Delta} \bP \div u_0}_{L^{p_2}(\R^3)} \leq c(\alpha,\beta) t^{\frac{1}{2} (\frac{3}{p_2} - \frac{3}{p_1}-1-|\beta|-2\alpha)} \norm{u_0}_{L^{p_1}(\R^3)},
	\end{equation}
	for all $t > 0$ and $1 \leq p_1 \leq p_2 \leq \infty$. In addition, the semigroup commutes with partial derivatives in the space variables.
	
	Let us consider the case when $\alpha$, $\beta$ are zero. Suppose that $s_1,s_2,p_1,p_2$, and $F$ obey the hypotheses of the lemma. Then
	\begin{equation}
		\norm{ \int_0^t e^{(t-\tau) \Delta} \bP \div F(\cdot,\tau) \, d\tau}_{L^{p_2}(\R^3)} \leq \int_0^t \norm{e^{(t-\tau) \Delta} \bP \div F(\cdot,\tau)}_{L^{p_2}(\R^3)} \, d\tau \leq $$ $$
		\stackrel{\eqref{eq:Oseennormbounds}}{\leq} c \int_0^t (t-\tau)^{\frac{1}{2} (\frac{3}{p_2} - \frac{3}{p_1}-1)} \norm{F(\cdot,\tau)}_{L^{p_1}(\R^3)} \, d\tau \leq $$ $$
		\leq c \int_0^t (t-\tau)^{\frac{1}{2} (\frac{3}{p_2} - \frac{3}{p_1}-1)} \tau^{\frac{s_1}{2}} \, d\tau \times \sup_{0 < \tau < T} \tau^{-\frac{s_1}{2}} \norm{F(\cdot,\tau)}_{L^{p_1}(\R^3)} \leq $$ $$
		\stackrel{\eqref{eq:Oseenconds}}{\leq} c \Big[ \big( \frac{s_1}{2} + 1 \big)^{-1} - 2 \big(\frac{3}{p_2} - \frac{3}{p_1} + 1 \big)^{-1} \Big] t^{\frac{1}{2} (\frac{3}{p_2} - \frac{3}{p_1} + s_1 + 1)} \times \norm{F}_{\cK^{s_1}_p(Q_T)} \leq $$ $$
		\stackrel{\eqref{eq:Oseenscaling}}{\leq} c \Big[ \big( \frac{s_1}{2} + 1 \big)^{-1} - 2 \big(\frac{3}{p_2} - \frac{3}{p_1} + 1 \big)^{-1} \Big] t^{\frac{s_2}{2}} \times \norm{F}_{\cK^{s_1}_p(Q_T)}.
	\end{equation}
This completes the proof of the first estimate. Now let us denote
	\begin{equation}
		u(\cdot,t) := \int_0^t e^{(t-s)} \bP \div F(s) \,ds
		\label{}
	\end{equation}
	for all $0 < t \leq T$
	and observe the identity
	\begin{equation}\label{eq:continuityid}
		u(\cdot,t) = e^{(t-s)\Delta} u(\cdot,\tau) + \int_{s}^t e^{(t-\tau)\Delta} \bP \div F(\cdot,\tau) \, d\tau
	\end{equation}
	for all $0 < s < t$. To prove that $u \in C((0,T];L^{p_2}(\R^3))$, one merely estimates
\begin{equation}\label{eq:continuityproof}
		\norm{u(\cdot,t) - u(\cdot,s)}_{L^{p_2}(\R^3)} \leq $$ $$ \leq \norm{e^{(t-s)\Delta} u(\cdot,s) - u(\cdot,s)}_{L^{p_2}(\R^3)} + \int_s^t \norm{e^{(t-\tau) \Delta} \bP \div F(\cdot,\tau)}_{L^{p_2}(\R^3)} \, d\tau \leq $$ $$
		\leq o(1) + c \int_s^t (t-\tau)^{\frac{1}{2}(\frac{3}{p_2} - \frac{3}{p_1} - 1)} \tau^{\frac{s_1}{2}} \, d\tau \times \norm{F}_{\cK^{s_1}_{p_1}(Q_T)} = o(1)
	\end{equation}
	as $|t-s| \to 0$, according to the assumption \eqref{eq:Oseenconds} on the exponents. 
	
	Let us now demonstrate how to prove the estimates on spatial derivatives. One estimates the integral in two parts,
	\begin{equation}
		\norm{ \int_0^t \nabla^l e^{(t-\tau) \Delta} \bP \div F(\cdot,\tau) \, d\tau}_{L^{p_2}(\R^3)} \leq $$ $$\leq c(l) \int_0^{\frac{t}{2}} (t-\tau)^{\frac{1}{2}(\frac{3}{p_2}-\frac{3}{p_1}-1-l)} \norm {F(\cdot,\tau)}_{L^{p_1}(\R^3)} \, d\tau +$$ $$+ c \int_{\frac{t}{2}}^t (t-\tau)^{\frac{1}{2} (\frac{3}{p_2}-\frac{3}{p_1}-1)} \norm {\nabla^l F(\cdot,\tau)}_{L^{p_1}(\R^3)} \, d\tau \leq$$ $$
		\leq c(l) \int_0^{\frac{t}{2}} (t-\tau)^{\frac{1}{2}(\frac{3}{p_2}-\frac{3}{p_1}-1-l)} \tau^{\frac{s_1}{2}} \, d\tau \times \norm{F}_{\cK^{s_1}_{p_1}(Q_T)} +$$ $$ c \int_{\frac{t}{2}}^t (t-\tau)^{\frac{1}{2} (\frac{3}{p_2}-\frac{3}{p_1}-1)} \tau^{\frac{1}{2}(s_1-l)} \, d\tau \times \norm{\tau^{\frac{l}{2}} \nabla^l F(\cdot,\tau)}_{\cK^{s_1}_{p_1}(Q_T)} \leq $$ $$
		\leq c(l,s_1,p_1,p_2) t^{\frac{1}{2} (s_2-l)} (\norm{F}_{\cK^{s_1}_{p_1}(Q_T)} + \norm{\tau^{\frac{l}{2}} \nabla^l F(\cdot,\tau)}_{\cK^{s_1}_{p_1}(Q_T)}).
	\end{equation}
	The proof of continuity in $L^{p_2}(\R^3)$ is similar to \eqref{eq:continuityproof} except with spatial derivatives in the identity \eqref{eq:continuityid}.

	The proof of estimates on the temporal derivatives is slightly more cumbersome due to the weighted spaces under consideration and that the temporal derivatives do not preserve the form of the equation. 
	By differentiating the identity \eqref{eq:continuityid} in time, one obtains
	\begin{equation}
		\p_t u(\cdot,t) = \p_t e^{(t-s)\Delta} u(\cdot,s) + $$ $$+ e^{(t-s)\Delta} \bP \div F(\cdot,s) + \int_s^t e^{(t-\tau)\Delta} \bP \div \p_\tau F(\cdot,\tau) \, d\tau,
	\end{equation}
	and more generally,
	\begin{equation}\label{eq:moregenerally}
		\p_t^k u(\cdot,t) = \p_t^k e^{(t-s)\Delta} u(\cdot,\tau) + $$ $$+ \sum_{\alpha=1}^k \p_t^{k-\alpha} e^{(t-s)\Delta} \bP \div \p_s^{\alpha-1} F(\cdot,s) + \int_s^t e^{(t-\tau)\Delta} \bP \div \p_\tau^k F(\cdot,\tau) \, d\tau.
	\end{equation}
	(In obtaining the identities, it is beneficial to compare with the differential form of the equation.) Now set $s := t/2$ and denote the terms by $I$, $II$, and $III$, respectively. We estimate
	\begin{equation}
		\norm{I}_{L^{p_2}} \leq c(k) t^{-k} \norm{u(\cdot,t/2)}_{L^{p_2}} \leq c(k,p_2) t^{-k+\frac{s_2}{2}} \norm{u}_{\cK^{s_2}_{p_2}(Q_T)} \leq $$ $$ \leq c(k,s_1,p_1,p_2) t^{-k+\frac{s_2}{2}} \norm{F}_{\cK^{s_1}_{p_1}(Q_T)},
	\end{equation}
	according to our original estimate. Furthermore,
		\begin{equation}
			\norm{II}_{L^{p_2}} \leq c(k) \sum_{\alpha=1}^k t^{\alpha-k+\frac{1}{2}(\frac{3}{p_2} - \frac{3}{p_1}-1)} \norm{(\p_t^{\alpha-1} F)(t/2)}_{L^{p_1}} \leq $$ $$ \leq c(k,s_1,p_1,p_2) t^{-k+\frac{s_2}{2}} \sum_{\alpha=1}^k \norm{\tau^{\alpha-1} F}_{\cK^{s_1}_{p_1}(Q_T)},
	\end{equation}
	and finally,
		\begin{equation}
			\norm{III}_{L^{p_2}} \leq c \int_{\frac{t}{2}}^t (t-\tau)^{\frac{1}{2}(\frac{3}{p_2} - \frac{3}{p_1} - 1)} \norm{\p_\tau^k F(\cdot,\tau)}_{L^{p_1}} \, ds \leq $$ $$ \leq c \int_{\frac{t}{2}}^t (t-\tau)^{\frac{1}{2}(\frac{3}{p_2} - \frac{3}{p_1} - 1)} \tau^{-k+\frac{s_1}{2}} \, ds \times \norm{\tau^k \p_\tau^k F}_{\cK^{s_1}_{p_2}(Q_T)} \leq $$ $$
			\leq c(k,s_1,p_1,p_2) t^{-k+\frac{s_2}{2}} \norm{\tau^k \p_\tau^k F}_{\cK^{s_1}_{p_2}(Q_T)}.
	\end{equation}
	This completes the proof of the time-derivative estimates. The proof of continuity is similar to \eqref{eq:continuityproof} except that one must use the identity \eqref{eq:moregenerally}.
	
 	Regularity in spacetime may be obtained by applying the temporal estimates to the spatial derivatives, since the spatial derivatives preserve the form of the equation.
\end{proof}

\subsection{Mild solutions in homogeneous Besov spaces}
The goal of this subsection is to review the well-posedness theory of the Navier-Stokes equations with initial data in Besov spaces.

Let us recall the notion of a mild solution to the Navier-Stokes equations, i.e., a tempered distribution $u$ on spacetime that satisfies the integral equations
\begin{equation}\label{eq:milddef}
	u(\cdot,t) = e^{t\Delta} u_0 - \int_0^t e^{(t-s)\Delta} \bP \div u \otimes u \,ds
\end{equation}
in a suitable function space.
The operator $e^{t\Delta}\bP \div$ is defined by convolution with the gradient of the Oseen kernel, see Chapter 11 of \cite{lemarie1}. We will often simply write
\begin{equation}
	u(\cdot,t) = e^{t\Delta} u_0 - B(u,u)(\cdot,t),
	\label{}
\end{equation}
\begin{equation}
	B(v,w)(\cdot,t) := \int_0^t e^{(t-s)\Delta} \bP \div v \otimes w \, ds.
	\label{}
\end{equation}
Distributional solutions to the Navier-Stokes equations are mild under rather general hypotheses, as discussed in Chapter 14 of \cite{lemarie1}. Small-data-global-existence in the spirit of Kato's seminal work \cite{kato} is known for divergence-free initial data in the following spaces:
\begin{equation}\label{chain}
	\dot H^{1/2}(\R^3) \into L^3(\R^3) \into \dot B^{s_{p_1}}_{p_1,q_1}(\R^3) \into $$ $$ B^{s_{p_2}}_{p_2,q_2}(\R^3)
\into BMO^{-1}(\R^3),
\end{equation}
where $3<p_1\leq p_2 <\infty$ and $3<q_1 \leq q_2 \leq\infty$. The case $BMO^{-1}(\R^3)$ was treated in \cite{tataru} and appears to be optimal. Ill-posedness has been demonstrated in \cite{bourgain} in the maximal critical space $\dot B^{-1}_{\infty,\infty}(\R^3)$. Local-in-time existence is known for initial data in the spaces
\begin{equation}
	\dot H^{1/2}(\R^3) \into L^3(\R^3) \into \dot B^{s_p}_{p,q}(\R^3) \into VMO^{-1}(\R^3)
\end{equation}
as long as $3 < p,q < \infty$, where 
\begin{equation}
	VMO^{-1}(\R^3) := \overline{\cS(\R^3)}^{BMO^{-1}(\R^3)}.
	\label{}
\end{equation}
The existence theory for initial data in homogeneous Besov spaces is summarized in the following theorem.

\begin{thm}[Mild solutions in critical Besov spaces]\label{mildexist}
	Let $3<p,q<\infty$ and $u_0 \in \dot B^{s_p}_{p,q}(\R^3)$ be a divergence-free vector field. Then there exists a time $0 < T^*(u_0) \leq \infty$ and a mild solution $u$ of the Navier-Stokes equations such that
	\begin{equation}\label{eq:class1}
		u \in C([0,T];\dot B^{s_p}_{p,q}(\R^3)) \cap \tilde L^1_T \dot B^{s_p + 2}_{p,q} \cap \tilde L^\infty_T \dot B^{s_p}_{p,q},
	\end{equation}
	\begin{equation}\label{eq:class2}
	u \in \mathring{\cK}_p(Q_T) \cap \mathring{\cK}_\infty(Q_T) \cap C((0,T];L^p \cap L^\infty(\R^3)),
	\end{equation}
	for all $0 < T < T^*(u_0)$. In addition, $u$ is the unique mild solution satisfying \eqref{eq:class1} and the unique mild solution satisfying $\eqref{eq:class2}$. Lastly, if $T^*(u_0) < \infty$, then the following two conditions are satisfied:
	\begin{equation}
		\text{(i) for all $p_0 \in [p,\infty]$, } \lim_{t \upto T^*} \norm{u(\cdot,t)}_{L^{p_0}(\R^3)} = \infty,
	\end{equation}
	\begin{equation}
		\text{ (ii) for all $p_0 \in [p,\infty)$, $q_0 \in [q,\infty)$, and $1 \leq r_0 \leq \infty$ such that $s_{p_0}+\frac{2}{r_0} \in (0,\frac{3}{p_0})$, } $$ $$
			\lim_{T \upto T^*} \norm{u}_{\tilde{L}^{r_0}_T \dot B^{s_{p_0}+\frac{2}{r_0}}_{p_0,q_0}} = \infty.
	\end{equation}
\end{thm}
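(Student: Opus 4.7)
The plan is to prove Theorem \ref{mildexist} by setting up two parallel Picard iteration schemes, one in each of the two functional frameworks, and then reconciling the two solutions via uniqueness. I would begin with the Kato-space framework. Working in the intersection $X_T := \mathring{\cK}_p(Q_T) \cap \mathring{\cK}_\infty(Q_T)$, the linear term $e^{t\Delta} u_0$ is controlled via the caloric characterization \eqref{katoendup}. Because $q<\infty$, Schwartz functions are dense in $\dot B^{s_p}_{p,q}$, which forces $\|e^{t\Delta}u_0\|_{X_T}\to 0$ as $T\dto 0$. For the bilinear term, Lemma \ref{katoest} applied with $(s_1,p_1)$ chosen so that $u\otimes v \in \cK^{s_1}_{p_1}$ yields the continuity of $B$ on $X_T$ with operator norm independent of $T$. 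Lemma \ref{abstract} then produces a unique local mild solution in $X_T$ for $T$ sufficiently small. Continuation in time gives a maximal time $T^*(u_0)$, continuity into $L^p\cap L^\infty$ after $t=0$ follows from the continuity assertion in Lemma \ref{katoest}.

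For the time-space Besov framework, I would use the same pattern but with the norm $\tilde L^\infty_T \dot B^{s_p}_{p,q} \cap \tilde L^1_T \dot B^{s_p+2}_{p,q}$. The linear estimate \eqref{eq:heatestbesovu0} controls $e^{t\Delta}u_0$; the smallness as $T\dto 0$ again follows from density. For the nonlinear term, apply the paraproduct decomposition \eqref{eq:paraproduct} to write $u\otimes v = \dot T_u v + \dot T_v u + \dot R(u,v)$, estimate each piece using \eqref{eq:lowhighest}--\eqref{eq:highhighest} with carefully chosen time exponents satisfying $1/r_1+1/r_2 = 1$, and then feed the result into \eqref{eq:heatestbesovf} to bound $B(u,v)$ in the working space. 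Continuity in $\dot B^{s_p}_{p,q}$ at $t=0$ comes from the realization condition together with $q<\infty$. A second Picard argument then gives a mild solution in \eqref{eq:class1}.

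For agreement of the two notions, I would use the integral equation and a bootstrap. A solution in the class \eqref{eq:class1} interpolates into $\tilde L^r_T \dot B^{s_p+2/r}_{p,q}$ for every $r \in [1,\infty]$; choosing $r$ so that $s_p+2/r$ matches a Kato-type regularity, and using the embedding $\tilde L^r_T \dot B^0_{p,q}\hookrightarrow L^r_t L^p_x$ (since $q<\infty$), one places $u$ into $\cK_p(Q_T)$, and a similar interpolation places $u$ in $\cK_\infty(Q_T)$. The uniqueness statement of the Kato construction then forces the two solutions to coincide, and thus the maximal existence time is the same for both classes.

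For the blow-up criteria, I would argue by contradiction. For (i) with $p_0\in[p,\infty]$, subcritical local existence in $L^{p_0}$ (for $p_0>3$, using Kato's theorem or the Kato-space theory applied to $u(\cdot,t_n)\in L^{p_0}$) gives a uniform lower bound on the subsequent existence interval in terms of $\|u(\cdot,t_n)\|_{L^{p_0}}^{-\alpha}$ for some $\alpha>0$. If the $\limsup$ were finite along some $t_n\upto T^*$, this would produce an extension past $T^*$, contradicting maximality. For (ii), a bounded $\tilde L^{r_0}_T \dot B^{s_{p_0}+2/r_0}_{p_0,q_0}$ norm as $T\upto T^*$, combined with the subcritical scaling (since $s_{p_0}+2/r_0>0$), yields a uniform-in-time local existence estimate from every nearby time slice, again contradicting maximality. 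The main obstacle I anticipate is the agreement step: one must verify rigorously that the two apparently distinct solutions solve the same integral equation in a class where uniqueness applies, and this requires careful bookkeeping of the embeddings between Besov-type and Kato-type spaces and of the continuity of the trilinear structure at the endpoints, which is where the hypothesis $3<p,q<\infty$ enters most delicately.
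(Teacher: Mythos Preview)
Your overall architecture is close to the paper's, but the blow-up criteria contain a conceptual error and a genuine gap.

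On organization: the paper runs a \emph{single} Picard iteration in the intersection $X_T := \tilde{L}^r_T \dot B^{s_p+2/r}_{p,q} \cap \mathring{\cK}_p(Q_T)$ for one fixed $r>2$ with $s_p+2/r \in (0,3/p)$, then bootstraps to the full classes \eqref{eq:class1}--\eqref{eq:class2}. This sidesteps your ``agreement step'' entirely; your two-iteration approach can be made to work, but the reconciliation you flag as the main obstacle is actually the easy part.

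The real difficulty is in the blow-up criteria. For (i) with $p_0>p$: if $\|u(\cdot,t_n)\|_{L^{p_0}}$ stays bounded, subcritical $L^{p_0}$ theory extends the solution past $T^*$ \emph{in $L^{p_0}$}, but maximality of $T^*$ is defined relative to \eqref{eq:class2}, which requires membership in $\mathring{\cK}_p$. You have not shown that the $L^p$ norm is also controlled. The paper closes this via Lemma \ref{propagation} (propagation of regularity): first bootstrap into $\cK_\infty(Q_{T^*})$, then apply the lemma with $X=L^\infty(Q_{t_0,T^*})$ and $E=X\cap L^\infty_t L^p_x$ near $T^*$.

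For (ii), your claim that $s_{p_0}+2/r_0>0$ gives ``subcritical scaling'' is wrong: the norm $\tilde{L}^{r_0}_T \dot B^{s_{p_0}+2/r_0}_{p_0,q_0}$ is scale-invariant under \eqref{scaling}, so a mere bound yields no uniform local-existence time. What is actually used (for $r_0<\infty$) is that this norm on $[t_0,T^*]$ becomes \emph{small} as $t_0\uparrow T^*$, which re-enters the perturbative regime; one must also first deduce $u\in C([0,T^*];\dot B^{s_{p_0}}_{p_0,q_0})$ from the mapping property of $B$, and then again invoke Lemma \ref{propagation} to force the extension back into the original space. Finally, the construction only gives that \emph{one} of the $L^p$ or Besov norms blows up; concluding that \emph{both} (i) and (ii) hold requires the equivalence between the two, which the paper establishes in a separate step via a weighted heat estimate of the form $\|t^{1/2}B(u,u)\|_{\tilde L^r_T \dot B^{3/p+2/r}_{p,q}}$ and is not addressed in your sketch.
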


\noindent In the statement, we have utilized the Banach space
\begin{equation}
	\mathring{\cK}^s_{p}(Q_T) := \left\lbrace u \in \cK^s_{p}(Q_T) : t^{-\frac{s}{2}} \norm{u(\cdot,t)}_{L^p(\R^3)} \to 0 \text{ as } t \dto 0 \right\rbrace.
	\label{}
\end{equation}

We will discuss the proof after reviewing two lemmas concerning quadratic equations in Banach spaces based on Lemma A.1 and A.2 in \cite{gallagher}. (See also Lemma 5 in \cite{auscher}.)
\begin{lem}[Abstract Picard lemma]\label{abstract}
	Let $X$ be a Banach space, $L \: X \to X$ a bounded linear operator such that $I-L \: X \to X$ is invertible, and
	$B$ a continuous bilinear operator on $X$ satisfying
	\begin{equation}
		\norm{B(x,y)}_X \leq \gamma \norm{x}_X \norm{y}_X
	\end{equation}
	for some $\gamma > 0$ and all $x$, $y \in X$. Then for all $a \in X$ satisfying
	\begin{equation}\label{eq:smallnesscond}
		\norm{(I-L)^{-1}a}_X < \frac{1}{4\norm{(I-L)^{-1}}_X \gamma},
	\end{equation}
	the Picard iterates $P_k(a)$, defined recursively by
	\begin{equation}
		P_0(a) := a, \quad P_{k+1}(a) := a + L(P_k) + B(P_k,P_k), \, k \geq 0,
	\end{equation}
	converge in $X$ to the unique solution $x \in X$ of the equation
	\begin{equation}\label{eq:abstracteq}
		x = a + L(x) + B(x,x)
	\end{equation}
	such that
	\begin{equation}
		\norm{x}_X < \frac{1}{2 \norm{(I-L)^{-1}}_X \gamma}.
	\end{equation}
\end{lem}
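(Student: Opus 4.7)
The plan is to reduce the problem to the classical Banach contraction principle by absorbing the linear operator into the data and nonlinearity. Setting $\kappa := \norm{(I-L)^{-1}}_X$, $\tilde a := (I-L)^{-1} a$, and $\tilde B(x,y) := (I-L)^{-1} B(x,y)$, the equation \eqref{eq:abstracteq} is equivalent to
$$x = \tilde a + \tilde B(x,x),$$
with $\norm{\tilde B(x,y)}_X \leq \tilde\gamma \norm{x}_X \norm{y}_X$ for $\tilde\gamma := \kappa\gamma$, while the hypothesis \eqref{eq:smallnesscond} becomes the cleaner smallness condition $\norm{\tilde a}_X < 1/(4\tilde\gamma)$. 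The Picard iterates $P_k$ of the statement correspond, after applying $(I-L)^{-1}$ at each step, to the natural iterates $Q_0 := \tilde a$, $Q_{k+1} := \tilde a + \tilde B(Q_k, Q_k)$ of the reduced equation.

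I would then verify by induction that $\norm{Q_k}_X \leq 2\norm{\tilde a}_X$ for all $k \geq 0$, since the inductive step gives
$$\norm{Q_{k+1}}_X \leq \norm{\tilde a}_X + \tilde\gamma\norm{Q_k}_X^2 \leq \norm{\tilde a}_X(1 + 4\tilde\gamma\norm{\tilde a}_X) < 2\norm{\tilde a}_X.$$
Combining the bilinear difference identity
$$Q_{k+1} - Q_k = \tilde B(Q_k - Q_{k-1}, Q_k) + \tilde B(Q_{k-1}, Q_k - Q_{k-1})$$
with this uniform bound yields $\norm{Q_{k+1}-Q_k}_X \leq 4\tilde\gamma\norm{\tilde a}_X \, \norm{Q_k - Q_{k-1}}_X$, a genuine contraction because $4\tilde\gamma\norm{\tilde a}_X < 1$. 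Hence $(Q_k)$ is Cauchy in $X$, and its limit $x$ satisfies $x = \tilde a + \tilde B(x,x)$ with $\norm{x}_X \leq 2\norm{\tilde a}_X < 1/(2\tilde\gamma) = 1/(2\norm{(I-L)^{-1}}_X\gamma)$, matching the bound claimed in the conclusion.

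For uniqueness inside the ball $\{\norm{y}_X < 1/(2\tilde\gamma)\}$, I would subtract two candidate solutions and apply the bilinear identity once more to obtain $\norm{x_1 - x_2}_X \leq \tilde\gamma(\norm{x_1}_X + \norm{x_2}_X)\norm{x_1-x_2}_X$, with prefactor strictly less than $1$, forcing $x_1 = x_2$. The iterates $P_k$ of the statement then converge to $x$ by the continuity of $(I-L)^{-1}$ applied to the Cauchy sequence $(Q_k)$. I do not foresee any substantive obstacle: the whole argument is the standard Banach fixed-point theorem for bilinear equations, and the only point of care is tracking the factor of two between the smallness threshold $1/(4\kappa\gamma)$ in the hypothesis and the norm bound $1/(2\kappa\gamma)$ in the conclusion, which is precisely the slack that keeps the Picard iterates inside the ball of radius $2\norm{\tilde a}_X$.
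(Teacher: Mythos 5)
Your reduction to the quadratic equation $x = \tilde a + \tilde B(x,x)$ with $\tilde a := (I-L)^{-1}a$ and $\tilde B := (I-L)^{-1}B$, the inductive bound $\norm{Q_k}_X \leq 2\norm{\tilde a}_X$, the Cauchy/contraction estimate, and the uniqueness argument in the ball of radius $1/(2\norm{(I-L)^{-1}}_X\gamma)$ are all correct, and they yield existence and uniqueness of the solution of \eqref{eq:abstracteq} with the stated norm bound. (For the record, the paper does not prove Lemma \ref{abstract} itself; it cites Lemma A.1--A.2 of \cite{gallagher} and Lemma 5 of \cite{auscher}, so the comparison here is only with your argument.)

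The genuine gap is your treatment of the Picard iterates. The sequence you analyze, $Q_0 = \tilde a$, $Q_{k+1} = \tilde a + \tilde B(Q_k,Q_k)$, satisfies $Q_{k+1} = a + L Q_{k+1} + B(Q_k,Q_k)$, i.e.\ the linear term is treated \emph{implicitly}, whereas the iterates of the statement, $P_{k+1} = a + L P_k + B(P_k,P_k)$, treat it explicitly. These are different sequences and are not related ``by applying $(I-L)^{-1}$ at each step'': already $P_1 = a + La + B(a,a)$ while $Q_1 = (I-L)^{-1}a + (I-L)^{-1}B\big((I-L)^{-1}a,(I-L)^{-1}a\big)$. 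So your closing sentence does not prove that the $P_k$ converge, and this cannot be repaired under the bare hypothesis that $I-L$ is invertible: take $B = 0$, $L = -2I$, and any $a \neq 0$ with $\gamma$ chosen small enough that \eqref{eq:smallnesscond} holds; then $P_{k+1} = a - 2P_k$ oscillates with $\norm{P_k}_X \to \infty$, even though the unique solution $x = a/3$ exists. To recover the iterate claim one needs extra input — for instance $\norm{L}_X < 1$ (which is how the lemma is actually invoked in Lemma \ref{stokes} and Theorem \ref{mildcalderon}) together with a direct invariance-and-contraction estimate for the map $x \mapsto a + Lx + B(x,x)$ on a suitable ball — or else the Picard iteration must be read in the implicit form that your $Q_k$ realize. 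As written, your proof establishes the existence, uniqueness, and bound, but not the convergence of the iterates $P_k(a)$ asserted in the statement.
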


\noindent Regarding the hypothesis on $L$, the operator $I-L \: X \to X$ is invertible with norm 
\begin{equation}
	\norm{(I-L)^{-1}}_X \leq \frac{1}{1-\norm{L}_X}
	\label{}
\end{equation}
whenever $\norm{L}_X < 1$. We use this fact in Lemma \ref{stokes} and Theorem \ref{mildcalderon}.

Often one applies Lemma \ref{abstract} to an intersection of spaces. For instance, to solve the Navier-Stokes equations with divergence-free initial data $u_0 \in L^2 \cap L^3(\R^3)$, the space $X$ may be chosen as $X := L^5_{t,x} \cap L^\infty_t L^2_x \cap L^2_t \dot H^1_x(Q_T)$. Similarly, one may choose $X$ to include higher derivatives in order to prove higher regularity. Technically, when one includes more derivatives in the space $X$, one may need to shorten the time interval on which Lemma \ref{abstract} is applied and argue that the additional regularity is propagated forward in time. This is cleverly avoided in \cite{pavlovic}.

\begin{lem}[Propagation of regularity]\label{propagation}
	In the notation of Lemma \ref{abstract}, let $E \into X$ be a Banach space. Suppose that $L$ is bounded on $E$ such that $I-L \: E \to E$ is invertible and $B$ maps $E \times X \to E$ and $X \times E \to E$ with
	\begin{equation}
			\max(\norm{B(y,z)}_E,\norm{B(z,y)}_E) \leq \eta \norm{y}_E \norm{z}_X
	\end{equation}
	for some $\eta > 0$ and all $y \in E$, $z \in X$. Finally, suppose that \begin{equation}\label{eq:constantscond}
		\norm{(I-L)^{-1}}_{E} \eta \leq \norm{(I-L)^{-1}}_{X} \gamma.
	\end{equation}
	For all $a \in E$ satisfying \eqref{eq:smallnesscond}, the solution $x$ from Lemma \ref{abstract} belongs to $E$ and satisfies
	\begin{equation}
		\norm{x}_E \leq 2 \norm{(I-L)^{-1} a}_E.
	\end{equation}
\end{lem}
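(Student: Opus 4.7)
The plan is to track the Picard iterates from Lemma \ref{abstract} in the stronger norm of $E$ and show that they remain bounded and form a Cauchy sequence there, so that the limit $x$ already lies in $E$. Since $a \in E$, $L$ is bounded on $E$, and $B$ maps $E \times X \to E$ and $X \times E \to E$, each iterate belongs to $E$. The strategy mirrors the contraction argument of Lemma \ref{abstract}, but with the $E$-norm controlled off-diagonally against the $X$-norm of the ``other'' factor in $B$.

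First I would reduce to an iteration without the linear term by setting $\tilde{a} := (I-L)^{-1} a$ and $\tilde{B}(y,z) := (I-L)^{-1} B(y,z)$, so the fixed-point equation becomes $x = \tilde{a} + \tilde{B}(x,x)$, with corresponding Picard iteration $Q_0 := \tilde{a}$, $Q_{k+1} := \tilde{a} + \tilde{B}(Q_k,Q_k)$. The induced bounds are $\norm{\tilde{B}(y,z)}_X \leq \tilde{\gamma} \norm{y}_X \norm{z}_X$ with $\tilde{\gamma} := \norm{(I-L)^{-1}}_X \gamma$, and $\max(\norm{\tilde{B}(y,z)}_E, \norm{\tilde{B}(z,y)}_E) \leq \tilde{\eta} \norm{y}_E \norm{z}_X$ with $\tilde{\eta} := \norm{(I-L)^{-1}}_E \eta$. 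The hypothesis \eqref{eq:constantscond} becomes $\tilde{\eta} \leq \tilde{\gamma}$, and \eqref{eq:smallnesscond} becomes $\norm{\tilde{a}}_X < 1/(4\tilde{\gamma})$.

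Next I would establish a uniform $E$-bound on the $Q_k$. The usual induction from the proof of Lemma \ref{abstract} gives $\norm{Q_k}_X \leq 2\norm{\tilde{a}}_X$ for all $k$. Combined with the off-diagonal bound on $\tilde{B}$, this yields
\begin{equation}
\norm{Q_{k+1}}_E \leq \norm{\tilde{a}}_E + \tilde{\eta} \norm{Q_k}_E \norm{Q_k}_X \leq \norm{\tilde{a}}_E + \tfrac{1}{2} \norm{Q_k}_E,
\end{equation}
so inductively $\norm{Q_k}_E \leq 2\norm{\tilde{a}}_E$. To upgrade this uniform bound to convergence in $E$, I would write $Q_{k+1} - Q_k = \tilde{B}(Q_k - Q_{k-1}, Q_k) + \tilde{B}(Q_{k-1}, Q_k - Q_{k-1})$ and estimate in the $E$-norm using the off-diagonal bound in both slots:
\begin{equation}
\norm{Q_{k+1} - Q_k}_E \leq \tfrac{1}{2} \norm{Q_k - Q_{k-1}}_E + 2 \tilde{\eta} \norm{\tilde{a}}_E \norm{Q_k - Q_{k-1}}_X.
\end{equation}
Since the Picard iteration converges geometrically in $X$ (a standard byproduct of Lemma \ref{abstract}, say $\norm{Q_k - Q_{k-1}}_X \leq C r^k$ for some $r \in (0,1)$), a discrete Gronwall argument then yields $\norm{Q_{k+1} - Q_k}_E \leq C' \rho^k$ for some $\rho \in (0,1)$. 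Hence $(Q_k)$ is Cauchy in $E$ and converges to some $y \in E$ with $\norm{y}_E \leq 2\norm{\tilde{a}}_E$. The continuity of $E \into X$ forces $y = x$, which gives the desired bound $\norm{x}_E \leq 2\norm{(I-L)^{-1}a}_E$.

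The main obstacle is ensuring that the $E$-iteration inherits a genuine contraction rate even though $B$ is only controlled off-diagonally. Condition \eqref{eq:constantscond} is precisely what makes this work: together with the $X$-smallness of $\tilde{a}$, it forces the factor $2 \tilde{\eta} \norm{\tilde{a}}_X$ below $1/2$, after which the $X$-Cauchy estimates feed into the $E$-Cauchy estimates through an inhomogeneous linear recursion whose solution decays geometrically.
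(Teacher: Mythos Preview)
Your argument is correct. The paper does not include a proof of this lemma in the final text (only a rough commented-out sketch remains in the source), instead referring to Lemma~A.2 in \cite{gallagher} and Lemma~5 in \cite{auscher}; your approach of reducing to the equation $x = (I-L)^{-1}a + (I-L)^{-1}B(x,x)$ and running the Picard iteration in $E$ with the off-diagonal bound is exactly the intended one. One minor simplification: in the Cauchy step you may estimate \emph{both} terms of $\tilde{B}(Q_k-Q_{k-1},Q_k)+\tilde{B}(Q_{k-1},Q_k-Q_{k-1})$ by $\tilde{\eta}\norm{Q_k-Q_{k-1}}_E$ times an $X$-norm (using the off-diagonal bound in the appropriate slot each time), which directly gives $\norm{Q_{k+1}-Q_k}_E \leq 4\tilde{\eta}\norm{\tilde{a}}_X\norm{Q_k-Q_{k-1}}_E$ with contraction factor strictly less than $1$, avoiding the inhomogeneous recursion.
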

\noindent Lemma \ref{propagation} does not require the quantity $\norm{a}_E$ to be small, but one may have to increase $\gamma > 0$ in order to meet the condition \eqref{eq:constantscond}.
\begin{comment}
\begin{proof}
	Well, when one sets up the integral equation, $x = (I-L)^{-1} a + (I-L)^{-1} B(x,x)$, we want to see if we have a contraction. Indeed,
	\begin{equation}
		\norm{x}_E \leq \norm{(I-L)^{-1}a}_E + \norm{(I-L)^{-1}}_E \eta \norm{x-1}_X \norm{x-1}_E \leq \norm{(I-L)^{-1}a}_E, okay.
		\label{}
	\end{equation}
	Now what about the contraction? That works because you don't even need the initial data.
\end{proof}
\end{comment}

\begin{proof}[Proof of Theorem \ref{mildexist}]
		Let us assume the hypotheses of Theorem \ref{mildexist}. Let $r > 2$ such that $s_p + \frac{2}{r} \in (0,\frac{3}{p})$.

	1. \emph{Constructing a local-in-time mild solution}. To obtain local-in-time solutions, we will apply Lemma \ref{abstract} to the integral formulation \eqref{eq:milddef} of the Navier-Stokes equations in the Banach space
	\begin{equation}
		X_{T} := \tilde{L}^r_{T} \dot B^{s_p+\frac{2}{r}}_{p,q} \cap\mathring{\cK}_p(Q_{T}).
		\label{XTdef}
	\end{equation}
	Note that the realization condition in \eqref{eq:timespacedef} is satisfied, so the time-space homogeneous Besov space in \eqref{XTdef} is complete.
	
	Let us prove that the bilinear operator $B$ is bounded on $X_T$. In fact, it is bounded separately on the two spaces in the intersection. To prove that $B$ is bounded on $\cK_p(Q_T)$, we use H{\"o}lder's inequality,
	\begin{equation}
		\norm{u \otimes v}_{\cK^{2s_p}_{\frac{p}{2}}(Q_T)} \leq \norm{u}_{\cK_p(Q_T)} \norm{v}_{\cK_p(Q_T)},
		\label{}
	\end{equation}
	and conclude with the heat estimates in Lemma \ref{katoest}.
	That the subspace $\mathring{\cK}_p(Q_T)$ is stabilized follows from taking the limit $T \dto 0$.
	To prove boundedness on $\tilde{L}^r_T \dot B^{s_p+\frac{2}{r}}_{p,q}$, we use the Bony decomposition \eqref{eq:paraproduct}. First, apply the low-high paraproduct estimate \eqref{eq:lowhighest} and Sobolev embedding to obtain
	\begin{equation}\label{eq:paraproductest1}
		\norm{\dot T_u v}_{\tilde{L}^{\frac{r}{2}}_T \dot B^{-1+s_p+\frac{4}{r}}_{p,\frac{q}{2}}} \leq c \norm{u}_{\tilde{L}^r_T \dot B^{-1+\frac{2}{r}}_{\infty,q}} \norm{v}_{\tilde{L}^r_T \dot B^{s_p+\frac{2}{r}}_{p,q}} \leq $$ $$ \leq c \norm{u}_{\tilde{L}^r_T \dot B^{s_p+\frac{2}{r}}_{p,q}} \norm{v}_{\tilde{L}^r_T \dot B^{s_p+\frac{2}{r}}_{p,q}}.
	\end{equation}
	The analogous estimate is valid for $\dot T_v u$. Now we combine the heat estimate \eqref{eq:heatestbesovf}, substituting $F := \dot T_u v + \dot T_v u$, with the low-high paraproduct estimate:
	\begin{equation}
		\norm{\int_0^t e^{(t-s) \Delta} \bP \div (\dot T_u v + \dot T_v u) \, ds}_{\tilde{L}^r_T \dot B^{s_p+\frac{2}{r}}_{p,q}} \leq c \norm{u}_{\tilde{L}^r_T \dot B^{s_p+\frac{2}{r}}_{p,q}} \norm{v}_{\tilde{L}^r_T \dot B^{s_p+\frac{2}{r}}_{p,q}}.
		\label{}
	\end{equation}
	To estimate the high-high contribution, we apply the property \eqref{eq:highhighest} to obtain
	\begin{equation}\label{eq:paraproductest2}
		\norm{\dot R(u,v)}_{\tilde{L}^{\frac{r}{2}}_T \dot B^{2(s_{p}+\frac{2}{r})}_{\frac{p}{2},\frac{q}{2}}} \leq c\norm{u}_{\tilde{L}^r_T \dot B^{s_{p}+\frac{2}{r}}_{p,q}} \norm{v}_{\tilde{L}^r_T \dot B^{s_{p}+\frac{2}{r}}_{p,q}}.
	\end{equation}
	Notice that $2(s_{p}+\frac{2}{r}) \in (0,\frac{6}{p})$, so the realization condition in \eqref{eq:timespacedef} is satisfied. We then apply the heat estimate \eqref{eq:heatestbesovf} with $F := \dot R(u,v)$ to obtain
		\begin{equation}
		\norm{\int_0^t e^{(t-s) \Delta} \bP \div \dot R(u,v) \, ds}_{\tilde{L}^r_T \dot B^{s_p+\frac{2}{r}}_{p,q}} \leq c \norm{u}_{\tilde{L}^r_T \dot B^{s_p+\frac{2}{r}}_{p,q}} \norm{v}_{\tilde{L}^r_T \dot B^{s_p+\frac{2}{r}}_{p,q}}.
	\end{equation}
	This completes the proof of boundedness in $\tilde{L}^r_T \dot B^{s_p+\frac{2}{r}}_{p,q}$.
	
	The last step in applying Lemma \ref{abstract} is to obtain the smallness condition \eqref{eq:smallnesscond}. Since $u_0 \in \dot B^{s_p}_{p,q}(\R^3)$, we have from \eqref{katoendup} and \eqref{eq:heatestbesovu0} that
	\begin{equation}
		e^{t\Delta} u_0 \in X_\infty, \quad \norm{e^{t\Delta} u_0}_{X_T} \to 0 \text{ as } T \dto 0.
		\label{}
	\end{equation}
	Hence, \eqref{eq:smallnesscond} will be satisfied as long as $0 < T \ll 1$ depending on $u_0$. This completes Step 1.

	2. \emph{Further properties of mild solutions}. Our next goal is to prove that a given mild solution $u \in X_T$ belongs to the full range of function spaces stated in the theorem. It is clear from \eqref{katoendup} and \eqref{eq:heatestbesovu0} that $e^{t \Delta} u_0$ is in the desired spaces, so we will focus on the mapping properties of the nonlinear term.
	
	First, suppose $u$ is a mild solution in $\mathring{\cK}_p(Q_T)$. The integral equation \eqref{eq:milddef} combined with the estimates in Lemma \ref{katoest} allow one to bootstrap $u$ into the space $\mathring{\cK}_\infty(Q_T)$. The same estimates in Lemma \ref{katoest} also give $u \in C((0,T]; L^p \cap L^\infty(\R^3))$. We conclude that $u$ belongs to \eqref{eq:class2}.
	
	Now suppose $u$ is a mild solution in $\tilde{L}^r_T \dot B^{s_p+\frac{2}{r}}_{p,q}$. We may combine the paraproduct estimates \eqref{eq:paraproductest1} and \eqref{eq:paraproductest2} in Step 1 with the heat estimates \eqref{eq:heatestbesovf} to obtain the mapping property
	\begin{equation}\label{eq:Bmappingbesov}
		B \: \tilde{L}^r_T \dot B^{s_p+\frac{2}{r}}_{p,q} \times \tilde{L}^r_T \dot B^{s_p+\frac{2}{r}}_{p,q} \to C([0,T];\dot B^{s_p}_{p,q}(\R^3)) \cap \tilde{L}^\infty_T \dot B^{s_p}_{p,q}.
	\end{equation}
	Demonstrating $u \in \tilde{L}^1_T \dot B^{s_p+2}_{p,q}$ requires a bootstrapping argument that we borrow from Remark A.2 of \cite{gallagher}. Specifically, suppose that $u \in \tilde{L}^{r_0}_T \dot B^{s_p+\frac{2}{r_0}}_{p,q}$ for some $r_0 > 1$ such that $s_p + \frac{2}{r_0} > 0$. Let us define the exponents
	\begin{equation}
		\frac{1}{l(\varepsilon)} := \frac{1-\varepsilon}{2} - \frac{3}{2p}, \quad \frac{1}{r_1(\varepsilon)} := \frac{1}{r_0} + \frac{1}{l(\varepsilon)}, \quad 0 \leq \varepsilon < s_p + \frac{2}{r_0}.
		\label{}
	\end{equation}
	From interpolation, it is clear that $u \in \tilde{L}^{l(\varepsilon)}_T \dot B^{s_p+\frac{2}{l(\varepsilon)}}_{p,q}$. Now consider the additional restrictions $\varepsilon > 0$ and $r_1(\varepsilon) \geq 1$.
	One may verify that
	\begin{equation}
		s_p + \frac{2}{l(\varepsilon)} = -\varepsilon, \quad s_1(\varepsilon) := 2s_p + \frac{2}{r_0} + \frac{2}{l(\varepsilon)} = s_p + \frac{2}{r_0} - \varepsilon > 0.
		\label{}
	\end{equation}
	According to the paraproduct laws \eqref{eq:lowhighest} and \eqref{eq:highhighest}, we have
	\begin{equation}
		\norm{\dot T_u u}_{\tilde{L}^{r_1(\varepsilon)}_T \dot B^{s_1(\varepsilon)}_{\frac{p}{2},\frac{q}{2}}} + \norm{\dot R(u,u)}_{\tilde{L}^{r_1(\varepsilon)}_T \dot B^{s_1(\varepsilon)}_{\frac{p}{2},\frac{q}{2}}} \leq $$ $$
		\leq c \norm{u}_{\tilde{L}^{l(\varepsilon)}_T \dot B^{-\varepsilon}_{p,q}} \norm{u}_{\tilde{L}^{r_0}_T \dot B^{s_p+\frac{2}{r_0}}_{p,q}}.
	\end{equation}
	We then apply the heat estimate \eqref{eq:heatestbesovf} to obtain
	\begin{equation}
		B(u,u) \in \tilde{L}^{r_1(\varepsilon)}_T \dot B^{s_p+\frac{2}{r_1(\varepsilon)}}_{p,q},
		\label{}
	\end{equation}
	provided that $0 < \varepsilon < s_p + \frac{2}{r_0}$ and $r_1(\varepsilon) \geq 1$.
	In fact, by repeating the arguments with a new value $1 < \tilde{r}_0 < r_0$, one may treat the case $\varepsilon = 0$. The final result is that
	\begin{equation}
		B(u,u) \in \tilde{L}^{\max(1,r_1)}_T \dot B^{s_p+\min(2,\frac{2}{r_1})}_{p,q}, \quad \frac{1}{r_0} - \frac{s_p}{2} = \frac{1}{r_1}.
		\label{}
	\end{equation}
	Hence, one may improve $1/r_0$ by the fixed amount $-s_p/2$ at each iteration. This completes the bootstrapping argument, so $u$ belongs to the class \eqref{eq:class1}.

	3. \emph{Uniqueness}.
	Suppose that $u_1, u_2$ are two mild solutions on $Q_T$ in the class \eqref{eq:class1} with the same initial data $u_0$. For contradiction, assume there exists a time $0 < T_0 < T$ such that $u_1 \equiv u_2$ on $Q_{T_0}$ but $u_1$ and $u_2$ are not identical on $Q_{T_0,T_0+\delta}$ for all $0 < \delta < T-T_0$. Because the solutions are continuous with values in $\dot B^{s_p}_{p,q}(\R^3)$, we must have $\tilde{u_0} := u_1(\cdot,T_0) = u_2(\cdot,T_0)$. When $0 < \delta \ll 1$, depending on $u_1$ and $u_2$, the two solutions on $Q_{T_0,T_0+\delta}$ with the same initial data $\tilde{u_0}$ fall into the perturbative regime of Lemma \ref{abstract} in the critical time-space homogeneous Besov spaces. Hence, the solutions coincide on $Q_{T_0,T_0+\delta}$, which is a contradiction.

	Similarly, suppose that $u_1, u_2$ are two mild solutions on $Q_T$ in the class \eqref{eq:class2}. For $0 < \delta \ll 1$ depending on $u_1$ and $u_2$, the solutions on $Q_{\delta}$ fall into the perturbative regime of Lemma \ref{abstract} in the Kato space $\mathring{\cK}_p(Q_\delta)$. Hence, the solutions coincide on a small time interval. Uniqueness may be propagated forward in time according to the subcritical theory in $L^p(\R^3)$.

	4. \emph{Characterizing the maximal time of existence}.
	We now return to the mild solution $u$ that we constructed in Step 1. The solution may be continued according to the subcritical theory in $L^p(\R^3)$ and the critical theory in time-space homogeneous Besov spaces. The result is the following. There exists a time $0 < T^*(u_0) \leq \infty$ such that for all $0 < T < T^*(u_0)$, $u$ is the unique mild solution in $X_T$ with initial data $u_0$, and for all $T > T^*(u_0)$, the solution $u$ cannot be extended in $X_T$. The time $T^*(u_0)$ is the \emph{maximal time of existence} of the mild solution $u$ with initial data $u_0$. In the proof, we will denote it simply by $T^*$.

	Suppose that $T^* < \infty$. Then $\lim_{T \upto T^*} \norm{u}_{X_T} = \infty$,
	since otherwise the solution can be continued past $T^*$. \emph{A priori}, we know that
	\begin{equation}
		\text{(i') } \lim_{t \upto T^*} \norm{u(\cdot,t)}_{L^p(\R^3)} = \infty	\quad \text{ or } \quad \text{(ii') } \lim_{T \upto T^*} \norm{u}_{\tilde{L}^r_T \dot B^{s_p+\frac{2}{r}}_{p,q}} = \infty.
		\label{}
	\end{equation}
	Note that we may avoid writing $\limsup_{t \upto T^*} \norm{u(\cdot,t)}_{L^p(\R^3)}$ in (i') because the $L^p(\R^3)$ norm is subcritical. It is immediate that (i) implies (i') and (ii) implies (ii'). We will now demonstrate the implications in the reverse direction.
	
Suppose that (i) does not hold. In other words, there exists $p \leq p_0 \leq \infty$ such that $u \in \cK_{p_0}(Q_{T^*})$. By the boostrapping in Kato spaces mentioned in Step 2, we obtain that \begin{equation}
u \in \cK_\infty(Q_{T^*}) \cap C((0,T^*];L^\infty(\R^3)).
\label{}
\end{equation} Then, we may apply Lemma \ref{propagation} concerning propagation of regularity in the spaces $X = L^\infty(Q_T)$ and $E = X \cap L^\infty_t L^p_x(Q_T)$ with initial data $u(\cdot,t_0) \in L^p \cap L^\infty(\R^3)$ and initial time $t_0$ close to $T^*$. The lemma prevents $\norm{u(\cdot,t)}_{L^p(\R^3)}$ from blowing up as $t \upto T^*$, so (i') does not hold. This is the same argument as in the last step of Proposition \ref{singularityforms}.

Now suppose that (ii) does not hold. Then there exists $p \leq p_0 < \infty$, $q \leq q_0 < \infty$, and $r_0 > 2$ such that $s_{p_0}+\frac{2}{r_0} \in (0,\frac{3}{p_0})$ and $u \in \tilde{L}^{r_0}_{T^*} \dot B^{s_{p_0}+\frac{2}{r_0}}_{p_0,q_0}$. By the arguments in Step 2, we must have that
\begin{equation}
u \in C((0,T^*]; \dot B^{s_{p_0}}_{p_0,q_0}) \cap \tilde{L}^{1}_{T^*} \dot B^{s_{p_0}+2}_{p_0,q_0} \cap \tilde{L}^\infty_{T^*} \dot B^{s_{p_0}}_{p_0,q_0}.
	\label{}
\end{equation}
As in the previous paragraph, we may apply Lemma \ref{propagation} in the spaces $X = \tilde{L}^{r_0}_T \dot B^{s_{p_0} + \frac{2}{r_0}}_{p_0,q_0}$, $E = X \cap \tilde{L}^{r}_T \dot B^{s_p+\frac{2}{r}}_{p,q}$, with initial data $u(\cdot,t_0) \in \dot B^{s_p}_{p,q} \cap \dot B^{s_{p_0}}_{p_0,q_0}(\R^3)$ and initial time $t_0$ close to $T^*$. This proves that the norm of $u$ in $\tilde{L}^r_{T^*} \dot B^{s_p+\frac{2}{r}}_{p,q}$ stays bounded, so (ii') fails. Here it is crucial that $u(\cdot,t)$ is continuous on $[0,T^*]$ with values in $\dot B^{s_{p_0}}_{p_0,q_0}(\R^3)$, so that the existence time of the solution with initial data $u(\cdot,t_0)$ has a uniform lower bound. This was not an issue in the subcritical setting. Also, we do not record here the bilinear estimates necessary to apply Lemma \ref{propagation}. They are similar to the estimates in Step 1. 

5. \emph{More characterizing}. It remains to prove that (i') is equivalent to (ii'). To begin, we will show that (ii) implies (i') by arguing the contrapositive. Suppose
	\begin{equation}\label{eq:ivimpliesilp}
		\sup_{T^*/4 \leq t \leq T^*} \norm{u(\cdot,t)}_{L^p(\R^3)} < \infty.
	\end{equation}
	By the subcritical theory in $L^p(\R^3)$, it is not difficult to show that
	\begin{equation}\label{eq:ivimpliesiw1p}
		\sup_{T^*/2 \leq t \leq T^*} \norm{\nabla u(\cdot,t)}_{L^p(\R^3)} < \infty.
	\end{equation}
	One may interpolate between \eqref{eq:ivimpliesilp} and \eqref{eq:ivimpliesiw1p} to obtain for all $0 < s < 1$,
	\begin{equation}
	\sup_{T^*/2 \leq t \leq T^*} \norm{u(\cdot,t)}_{\dot B^s_{p,1}(\R^3)} \leq $$ $$ \leq c(s) \sup_{T^*/2 \leq t \leq T^*} \norm{u(\cdot,t)}_{\dot B^1_{p,\infty}(\R^3)}^s \norm{u(\cdot,t)}_{\dot B^{0}_{p,\infty}(\R^3)}^{1-s} < \infty,
	\end{equation}
	see Proposition 2.22 in \cite{bahourichemin}. Now, let $m := \max(q,r)$.
	By Minkowski's and H{\"o}lder's inequalities,
	\begin{equation}
		\norm{u}_{\tilde{L}^r_{T^*/2,T^*} \dot B^{s_p+\frac{2}{r}}_{p,m}} \leq \norm{u}_{L^r_t (\dot B^{s_p+\frac{2}{r}}_{p,m})_x (Q_{T^*/2,T^*})} \leq $$ $$
		\leq c \, (T^*)^{\frac{1}{r}} \sup_{T^*/2 \leq t \leq T^*} \norm{u(\cdot,t)}_{\dot B^{s_p+\frac{2}{r}}_{p,1}(\R^3)} < \infty,
		\label{}
	\end{equation}
	and one concludes that (ii) fails.
	
	Now we will demonstrate that (i') implies (ii'), again by arguing the contrapositive. Let us assume that $\norm{u}_{\tilde{L}^r_{T^*} \dot B^{s_p+\frac{2}{r}}_{p,q}} < \infty$, so that by the mapping properties of the nonlinear term in Step 2, 
	\begin{equation}
		u \in C([0,T^*];\dot B^{s_p}_{p,q}(\R^3)).
		\label{eq:wemaysuppose}
	\end{equation}
	Our goal is to prove that the following quantity is bounded:
	\begin{equation}\label{eq:timebesovhigher}
		\norm{u}_{\tilde{L}^\infty_{T_0,T^*} \dot B^{\frac{3}{p}}_{p,q}(\R^3)} < \infty,
	\end{equation}
	for some $0 < T_0 < T^*$.
	Indeed, interpolating between \eqref{eq:wemaysuppose} and \eqref{eq:timebesovhigher}, one may demonstrate that
	\begin{equation}
		\sup_{T_0 \leq t \leq T^*} \norm{u(\cdot,t)}_{\dot B^{0}_{p,1}(\R^3)} \leq $$ $$ \leq c \sup_{T_0 \leq t \leq T^*} \norm{u(\cdot,t)}_{\dot B^{s_p}_{p,\infty}(\R^3)}^{s_p+1} \norm{u(\cdot,t)}_{\dot B^{s_p+1}_{p,\infty}(\R^3)}^{-s_p} < \infty.
	\end{equation}
	In light of the embedding $\dot B^0_{p,1}(\R^3) \into L^p(\R^3)$, this will complete the proof that (i') fails.
	
	To prove \eqref{eq:timebesovhigher}, we will need more estimates for the heat equation. Let $\cC \subset \R^3$ be an annulus and $\lambda > 0$.
	Let $0 < T \leq \infty$ and $f$ be a tempered distribution on $Q_T$ with spatial Fourier transform satisfying $\supp{\widehat{f}} \subset \lambda \cC \times [0,T]$. Then
	\begin{equation}
		\lambda \norm{t^{\frac{1}{2}} \int_0^t e^{(t-\tau)\Delta} f(\cdot,\tau) \, d\tau}_{L^{r_2}_t L^{p_2}_x(Q_T)} \leq $$ $$ \leq C \lambda^{2(\frac{1}{r_1} - \frac{1}{r_2} - 1)} \lambda^{3(\frac{1}{p_1}-\frac{1}{p_2})} (\norm{f}_{L^{r_1}_t L^{p_1}_x(Q_T)} + \lambda \norm{t^{\frac{1}{2}} f}_{L^{r_1}_t L^{p_1}_x(Q_T)}), $$ $$
	1 \leq p_1 \leq p_2 \leq \infty, \quad 1 \leq r_1 \leq r_2 \leq \infty.
\end{equation}
Recall the smoothing effect \eqref{eq:freqlocalizedheat} of the heat flow. When $p_1 = p_2$, one may write
\begin{equation}
	\lambda \norm{\int_0^t e^{(t-\tau)\Delta} f(\cdot,\tau) \, d\tau}_{L^{p_1}(\R^3)} \leq $$ $$ \leq C \int_0^{\frac{t}{2}} \norm{\nabla e^{(t-\tau)\Delta} f(\cdot,\tau)}_{L^{p_1}(\R^3)} \, d\tau + \lambda \int_{\frac{t}{2}}^t \norm{e^{(t-\tau) \Delta} f(\cdot,\tau)}_{L^{p_1}(\R^3)} \, d\tau \leq $$ $$ \leq C \int_{0}^{\frac{t}{2}} (t-\tau)^{-\frac{1}{2}} e^{-c(t-\tau)\lambda^2} \norm{f(\cdot,\tau)}_{L^{p_1}(\R^3)} \, d\tau + $$ $$ + C \lambda \int_{\frac{t}{2}}^t e^{-c(t-\tau)\lambda^2} \tau^{-\frac{1}{2}} \norm{\tau^{\frac{1}{2}} f(\cdot,\tau)}_{L^{p_1}(\R^3)} \,d\tau \leq $$ $$ \leq Ct^{-\frac{1}{2}} \big( \int_0^{\frac{t}{2}} e^{-c(t-\tau)\lambda^2} \norm{f(\cdot,\tau)}_{L^{p_1}(\R^3)} \, ds + \lambda \int_{\frac{t}{2}}^t e^{-c(t-\tau)\lambda^2} \norm{\tau^{\frac{1}{2}} f(\cdot,\tau)}_{L^{p_1}(\R^3)} \, d\tau \big)
\end{equation}
and treat the subsequent integration in time by Young's convolution inequality. The case $p_2 > p_1$ follows from Bernstein's inequality. This leads us to a higher regularity estimate in Besov spaces analogous to Lemma \ref{katoest}:
\begin{equation}\label{eq:newheatest}
	\norm{t^{\frac{1}{2}} \int_0^t e^{(t-s)\Delta} \bP \div F(\cdot,s) \, ds}_{\tilde{L}^{r_2}_T \dot B^{\frac{3}{p_2}}_{p_2,q_2}} \leq $$ $$ \leq c (\norm{F}_{\tilde{L}^{r_1}_T \dot B^{s_{p_1}+\frac{2}{r_1}-1}_{p_1,q_1}} + \norm{t^{\frac{1}{2}} F}_{\tilde{L}^{r_1}_T \dot B^{s_{p_1}+\frac{2}{r}}_{p_1,q_1}}),
\end{equation}
when also $1 \leq q_1 \leq q_2 \leq \infty$.

Let us return to the proof. We will apply Lemma \ref{abstract} in the following Banach spaces to obtain \eqref{eq:timebesovhigher}:
\begin{equation}
	Y_{T} := \big\lbrace v \in \tilde{L}^r_{T} \dot B^{s_p+\frac{2}{r}}_{p,q} : t^{\frac{1}{2}} v \in \tilde{L}^r_{T} \dot B^{\frac{3}{p} + \frac{2}{r}}_{p,q} \big\rbrace,
	\label{}
\end{equation}
\begin{equation}
	\norm{v}_{Y_{T}} := \max\big( \norm{v}_{\tilde{L}^r_{T} \dot B^{s_p+\frac{2}{r}}_{p,q}}, \norm{t^{\frac{1}{2}} v}_{\tilde{L}^r_{T} \dot B^{\frac{3}{p}+\frac{2}{r}}_{p,q}} \big).
	\label{}
\end{equation}
Let us show that $B$ is a bounded operator on $Y_T$ with norm $\kappa$ independent of the time $T > 0$. We need only prove boundedness of the upper part of the norm. Now, according to the low-high paraproduct estimate \eqref{eq:lowhighest} and Sobolev embedding,
\begin{equation}\label{newpara1}
		\norm{t^{\frac{1}{2}} \dot T_v w}_{\tilde{L}^{\frac{r}{2}}_T \dot B^{s_p+\frac{4}{r}}_{p,\frac{q}{2}}} \leq c \norm{v}_{\tilde{L}^r_T \dot B^{-1+\frac{2}{r}}_{\infty,q}} \norm{t^{\frac{1}{2}} w}_{\tilde{L}^r_T \dot B^{\frac{3}{p}+\frac{2}{r}}_{p,q}} \leq $$ $$ \leq c \norm{v}_{\tilde{L}^r_T \dot B^{s_p+\frac{2}{r}}_{p,q}} \norm{t^{\frac{1}{2}} w}_{\tilde{L}^r_T \dot B^{\frac{3}{p}+\frac{2}{r}}_{p,q}} \leq c \norm{v}_{Y_T} \norm{w}_{Y_T},
	\end{equation}
	and similarly for $\dot T_w v$.
	According to the high-high estimate \eqref{eq:highhighest}, we have
	\begin{equation}\label{newpara2}
		\norm{t^{\frac{1}{2}} \dot R(v,w)}_{\tilde{L}^{\frac{r}{2}}_T \dot B^{-1+\frac{6}{p}+\frac{4}{r}}_{\frac{p}{2},\frac{q}{2}}} \leq c\norm{v}_{\tilde{L}^r_T \dot B^{s_{p}+\frac{2}{r}}_{p,q}} \norm{t^{\frac{1}{2}} w}_{\tilde{L}^r_T \dot B^{\frac{3}{p}+\frac{2}{r}}_{p,q}} \leq $$ $$ \leq c\norm{v}_{Y_T} \norm{w}_{Y_T}.
	\end{equation}
	Since $v, w \in X_T$, there is no ambiguity modulo polynomials in forming the paraproduct operators. The bilinear estimate is then completed by combining \eqref{eq:newheatest} with (\ref{eq:paraproductest1}, \ref{newpara1}) for the low-high terms and (\ref{eq:paraproductest2}, \ref{newpara2}) for the high-high terms.
By the same paraproduct estimates and applying \eqref{eq:newheatest} with $r_2 = \infty$, one may show that
\begin{equation}
	B \: Y_T \times Y_T \to C((0,T];\dot B^{\frac{3}{p}}_{p,q}(\R^3)).
	\label{}
\end{equation}
This is the property we will use below.

We are ready to conclude. Recall that $u \in C([0,T^*];\dot B^{s_p}_{p,q}(\R^3))$ by \eqref{eq:wemaysuppose}. By continuity, there exists a time $T_1 > 0$ such that $\norm{e^{t\Delta} u(\cdot,t_0)}_{Y_{T_1}} < (4\kappa)^{-1}$ as $t_0 \upto T^*$. Now we may apply Lemma \ref{abstract} in the space $Y_{T_1}$ with $u(\cdot,T^*-T_1)$ as initial data, and the result agrees with the mild solution $u$ on the time interval $[T^*-T_1,T^*]$. Therefore, we may take $T_0 := T^*-T_1/2$ in \eqref{eq:timebesovhigher} to complete the proof. (In fact, by similar arguments of re-solving the equation, one may show that \eqref{eq:timebesovhigher} holds everywhere away from the initial time.)
\end{proof}

Let us also provide the following characterization of the maximal time of existence.

\begin{prop}[Formation of singularity at blow-up time]\label{singularityforms}
	Let $3 < p < \infty$ and $u_0 \in L^p(\R^3)$ be a divergence-free vector field. Let $T^*(u_0) > 0$ be the maximal time of existence of the unique mild solution $u$ such that $u(\cdot,t)$ is continuous in $L^p(\R^3)$. If $T^*(u_0) < \infty$, then $u$ must have a singular point at time $T^*(u_0)$.
\end{prop}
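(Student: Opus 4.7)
The plan is a proof by contradiction. Suppose $T^* := T^*(u_0) < \infty$ and that $u$ has no singular point at time $T^*$, meaning that for every $x_0 \in \R^3$ there exists $r(x_0) > 0$ with $u \in L^\infty(Q(x_0, r(x_0)))$, where $Q(x_0, r) := B(x_0, r) \times (T^*-r^2, T^*)$. The goal is to establish $\limsup_{t \upto T^*} \|u(\cdot, t)\|_{L^p(\R^3)} < \infty$; once this is in hand, the classical subcritical local existence theory in $L^p(\R^3)$ with $p > 3$ provides from initial data $u(\cdot,t_0)$ a lifespan depending only on $\|u(\cdot,t_0)\|_{L^p}$, so by taking $t_0$ sufficiently close to $T^*$ this lifespan exceeds $T^* - t_0$ and produces an extension of $u$ past $T^*$, contradicting the maximality of $T^*$.

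The intermediate step is to upgrade the local boundedness hypothesis to a uniform slab estimate $u \in L^\infty(\R^3 \times (T^*-\delta, T^*))$ for some $\delta > 0$. On any fixed ball $\overline{B(R)}$ compactness yields a finite subcover by cylinders $Q(x_i, r(x_i))$, giving $u \in L^\infty(B(R) \times (T^*-\delta_R, T^*))$. For the spatial tail I would fix $t_* \in (0, T^*)$ close to $T^*$ with $u(\cdot, t_*) \in L^p \cap L^\infty(\R^3)$, which is available from Theorem \ref{mildexist} placing $u$ in $C((0,T^*); L^p \cap L^\infty)$, restart the integral equation
\begin{equation*}
 u(\cdot,t) = e^{(t-t_*)\Delta} u(\cdot, t_*) - \int_{t_*}^t e^{(t-s)\Delta} \bP \div (u \otimes u)(\cdot, s) \, ds,
\end{equation*}
and run a short-time perturbative argument in $L^\infty_t L^\infty_x \cap L^\infty_t L^p_x$ starting at $t_*$, using the Kato-type heat estimates of Lemma \ref{katoest}. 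By choosing $t_*$ close enough to $T^*$, the contraction closes on the entire interval $[t_*, T^*)$; uniqueness identifies its output with $u$, and this output inherits spatial decay and the $L^\infty$ bound from $u(\cdot, t_*)$. Combined with the compact-ball bound above, this yields the desired slab estimate.

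With the slab estimate in hand, I would apply Lemma \ref{propagation} in the spaces $X = L^\infty(Q_T)$ and $E = X \cap L^\infty_t L^p_x(Q_T)$ with initial datum $u(\cdot, t_0) \in L^p \cap L^\infty(\R^3)$ for some $t_0 \in (T^*-\delta, T^*)$. The lemma forces $u \in L^\infty_t L^p_x(\R^3 \times (t_0, T^*))$, so $\sup_{t \in (t_0, T^*)} \|u(\cdot, t)\|_{L^p(\R^3)} < \infty$, closing the argument.

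The main obstacle I anticipate is the spatial-tail step, since a priori the Kato norms of $u$ on $Q_{T^*}$ could be infinite, preventing any bootstrap that uses them as controlling quantities. The resolution is to replace the bootstrap by a short-time contraction restarted at $t_*$ arbitrarily close to $T^*$, using only $\|u(\cdot, t_*)\|_{L^p \cap L^\infty}$ as the controlling quantity; this is the same argument the author later reuses in the proof of Theorem \ref{mildexist}(i).
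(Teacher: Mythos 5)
Your overall architecture (contradiction, local bounds plus a tail bound to get a slab estimate, then Lemma \ref{propagation} with $X=L^\infty(Q_{t_0,T^*})$, $E=X\cap L^\infty_t L^p_x$ to keep $\|u(\cdot,t)\|_{L^p}$ bounded) matches the paper's proof in its first and last steps, but the middle step --- the spatial tail --- contains a genuine gap. You propose to restart the integral equation at $t_*$ near $T^*$ and claim that ``by choosing $t_*$ close enough to $T^*$, the contraction closes on the entire interval $[t_*,T^*)$.'' This cannot work: the contraction in $L^\infty_t L^\infty_x\cap L^\infty_t L^p_x$ requires a smallness condition of the form $c\,(T^*-t_*)^{1/2}\|u(\cdot,t_*)\|_{L^\infty}\ll 1$, and since $T^*$ is the maximal existence time, Leray's lower bound \eqref{subcrit} gives $(T^*-t_*)^{1/2}\|u(\cdot,t_*)\|_{L^\infty}\geq c_\infty>0$ for every $t_*<T^*$; equivalently, if such a contraction ever covered $[t_*,T^*)$ it would in fact extend the solution past $T^*$. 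Note also that your restart step never uses the no-singular-point hypothesis, so if it were valid it would prove that finite-time blow-up never occurs. The analogy you draw with Step 4 of Theorem \ref{mildexist} is misleading: there, the \emph{assumption} that a critical Kato norm is finite up to $T^*$ (the negation of (i)) is exactly what supplies the smallness allowing the perturbative argument to reach $T^*$; here no such critical control is available, and producing a uniform bound at spatial infinity up to $T^*$ is precisely the hard point.

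The paper closes this gap with non-perturbative tools. Following Calder\'on, split $u_0=U_0+V_0$ with $U_0\in L^2\cap L^p$ and $\|V_0\|_{L^p}<\delta$ so small that the mild solution $V$ with data $V_0$ exists beyond $2T^*$; the remainder $U=u-V$ solves the perturbed system \eqref{eq:Uequationapp} and, via the energy inequality (which is supercritical and does not blow up), satisfies $U\in L^\infty_t L^2_x\cap L^2_t\dot H^1_x(Q_{T^*})$. Hence $u\in L^3(Q_{T^*})+L^\infty_t L^p_x(Q_{T^*})$ and the associated pressure lies in $L^{3/2}(Q_{T^*})+L^\infty_t L^{p/2}_x(Q_{T^*})$, which forces $\int_0^{T^*}\int_{B(x,1)}|u|^3+|p|^{3/2}\,dx\,dt\to 0$ as $|x|\to\infty$. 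The Caffarelli--Kohn--Nirenberg $\varepsilon$-regularity criterion (Theorem \ref{ckncrit}), applied to the suitable pair $(u,p)$, then yields a uniform bound $\sup_K|u|<\kappa$ on $K=(\R^3\setminus B(R))\times(T^*/2,T^*)$, i.e.\ exactly the tail estimate up to time $T^*$ that your contraction cannot deliver; combined with the no-singular-point hypothesis on a compact ball this gives the slab bound, and from there your final propagation step is the paper's. Without the splitting, the energy bound, the pressure decomposition, and $\varepsilon$-regularity (or some substitute of comparable strength), the proof does not go through.
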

\begin{proof}
	This argument is based on similar arguments in \cite{rusin,jiasverak}. Let $u$ be as in the hypothesis of the proposition and assume that $T^*(u_0) < \infty$. Following Calder{\'o}n \cite{calderon}, for all $\delta > 0$, there exist divergence-free vector fields $U_0 \in L^2(\R^3) \cap L^p(\R^3)$ and $V_0 \in L^p(\R^3)$ such that
	\begin{equation}
		u_0 = U_0 + V_0, \quad \norm{V_0}_{L^p(\R^3)} < \delta.
		\label{}
	\end{equation}
	Let $V$ denote the unique mild solution associated to the initial data $V_0$, such that $V(\cdot,t)$ is continuous in $L^p(\R^3)$. Futhermore, one may choose $0 < \delta \ll 1$ such that $T^*(V_0) \geq 2 T^*(u_0)$. We will abbreviate $T^*(u_0)$ as $T^*$. The remainder $U$ solves the perturbed Navier-Stokes equations on $Q_{T^*}$,
	\begin{equation}\label{eq:Uequationapp}
		\p_t U - \Delta U + \div U \otimes U + \div U \otimes V + \div V \otimes U = - \nabla P, \quad
		\div U = 0,
	\end{equation}
	with the initial condition $U(\cdot,0) = U_0$. By the well-posedness theory for the equation \eqref{eq:Uequationapp} as well as the energy inequality, one may prove that $U$ is in the energy space up to the blow-up time:
	\begin{equation}
		U \in L^\infty_t L^2_x \cap L^2_t \dot H^1_x(Q_{T^*}).
		\label{}
	\end{equation}
	Based on the decomposition of $u$, we obtain
	\begin{equation}\label{eq:uregularity}
	u \in L^3(Q_{T^*}) + L^\infty_t L^p_x(Q_{T^*}), 
\end{equation}
\begin{equation}
	p \in L^{3/2}(Q_{T^*}) + L^\infty_t L^{p/2}_x(Q_{T^*}),
\end{equation}
where $p := (-\Delta)^{-1} \div \div u \otimes u$ is the pressure associated to $u$. Recall now that $u$ is in subcritical spaces, so one may justify the local energy inequality for $(u,p)$ up to the blow-up time $T^*$. Moreover, \eqref{eq:uregularity} implies
\begin{equation}
	\lim_{|x| \to \infty} \int_{0}^{T^*} \int_{B(x,1)} |u|^3 + |p|^{3/2} \, dx \,dt = 0.
	\label{}
\end{equation}
Therefore, by the $\varepsilon$-regularity criterion in Theorem \ref{ckncrit}, there exist constants $R, \kappa > 0$, and the set $K := (\R^3 \setminus B(R)) \times (T^*/2,T^*)$, such that
\begin{equation}\label{eq:controloutsidebigball}
	\sup_K |u(x,t)| < \kappa.
\end{equation}
Finally, suppose that $u$ has no singular points at time $T^*$. This assumption, paired with the estimate \eqref{eq:controloutsidebigball}, implies that $u \in L^\infty(Q_{\epsilon,T^*})$ for some $\epsilon \in (T^*/2, T^*)$.
\begin{comment}
Now, consider
\begin{equation}
	\norm{u(\cdot,t)}_{L^p(\R^3)} = \norm{e^{(t-\epsilon)\Delta} u(\cdot,\epsilon) - \int_\epsilon^t e^{(t-s)\Delta} \bP \div u \otimes u \, ds}_{L^p(\R^3)} \leq $$ $$
	\leq c + c \norm{u}_{L^\infty(Q_{\epsilon,T^*})} \times \int_\epsilon^t (t-s)^{-1/2} \norm{u}_{L^\infty_t L^p_x(Q_{\epsilon,s})} \, ds.
\end{equation}
Defining $y(\cdot,s) := \norm{u}_{L^\infty_t L^p_x(Q_{\epsilon,s})}$, we have shown
\begin{equation}
	y(\cdot,t) \leq c + c \int_0^t
	\label{}
\end{equation}
\end{comment}
Consider the bilinear estimates
\begin{equation}\label{bilinearshit}
	\norm{B(v,w)}_{L^\infty_t L^p_x(Q_T)} \leq cT^{1/2} \norm{v}_{L^\infty_t L^p_x(Q_T)} \norm{w}_{L^\infty(Q_T)} $$ $$
	\norm{B(v,w)}_{L^\infty_t L^p_x(Q_T)} \leq cT^{1/2} \norm{v}_{L^\infty(Q_T)} \norm{w}_{L^\infty_t L^p_x(Q_T)}
\end{equation}
for all $T>0$.
Now one applies Lemma \ref{propagation} with the bilinear estimates \eqref{bilinearshit} and the choice of spaces
\begin{equation}
	X := L^\infty(Q_{t_0,T^*}), \quad E := X \cap L^\infty_t L^p_x(Q_{t_0,T^*}),
	\label{}
\end{equation}
for each $\epsilon < t_0 < T^*$. This prevents $\norm{u(\cdot,t)}_{L^p(\R^3)}$ from becoming unbounded as $t \upto T^*$ and completes the proof.
%It is also possible to treat the final step using Gronwall's inequality, but then it is more convenient to use $\nabla u \in L^\infty(Q_{\varepsilon,T^*(u_0)})$ to account for the spatial derivative in the semigroup $e^{t\Delta} \div \bP$.
%Can use Gronwall's to show that the higher regularity stays
\begin{comment}
Because $T^*(V_0) \geq 2T^*(u_0)$, we know that $L^\infty(Q_{\epsilon,T^*(u_0)})$, and hence,
\begin{equation}
	U = u - V \in L^\infty(Q_{\epsilon,T^*(u_0)}).
	\label{}
\end{equation} Interpolating between $L^\infty_t L^2_x(Q_{T^*})$ and $L^\infty(Q_{\delta,T^*})$, one sees that $U$ remains bounded in $L^p(\R^3)$ at the blow-up time. The mild solution $V$ is known to be bounded in $L^p(\R^3)$ at the blow-up time. Together, this implies that $u$ remains bounded in $L^p(\R^3)$ at the blow-up time, which is a contradiction.
\end{comment}
\end{proof}
\begin{cor}\label{singularity}
	Let $u$ be the mild solution of Theorem \ref{mildexist} with initial data $u_0$. If $T^*(u_0) < \infty$, then $u$ has a singular point at time $T^*(u_0)$.
\end{cor}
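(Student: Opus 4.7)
The plan is to reduce the statement to Proposition \ref{singularityforms}, which is its $L^p$ analogue. The key observation is that after the initial time, the Besov mild solution immediately gains $L^p$ regularity, so we can restart the problem from a positive time with $L^p$ data and apply the previous proposition there.

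More concretely, I would proceed as follows. Fix $0 < t_0 < T^*(u_0)$. By the regularity class \eqref{eq:class2} from Theorem \ref{mildexist}, we have $u(\cdot,t_0) \in L^p(\R^3) \cap L^\infty(\R^3)$, and $u \in C((0,T^*);L^p(\R^3))$. Define the time-translated vector field $\tilde u(\cdot,t) := u(\cdot, t_0 + t)$, which is a mild solution of the Navier-Stokes equations on $[0,T^* - t_0)$ with initial datum $u(\cdot,t_0) \in L^p(\R^3)$, continuous in $L^p(\R^3)$. Let $\tilde T^*$ denote the maximal existence time of $\tilde u$ in the $L^p$ sense of Proposition \ref{singularityforms}.

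Next I would identify $\tilde T^* = T^* - t_0$. The inequality $\tilde T^* \geq T^* - t_0$ is immediate from the $L^p$-continuity of $u$ on $(0,T^*)$ and uniqueness of mild solutions in the $L^p$ class. For the reverse inequality, suppose for contradiction that $\tilde T^* > T^* - t_0$. Then by the standard subcritical $L^p$ theory we would have $\tilde u \in L^\infty((0,\tau); L^p(\R^3))$ for some $\tau > T^* - t_0$, hence $\limsup_{t \upto T^*} \norm{u(\cdot,t)}_{L^p(\R^3)} < \infty$. This directly contradicts characterization (i) of the maximal existence time in Theorem \ref{mildexist}, which forces $\lim_{t \upto T^*} \norm{u(\cdot,t)}_{L^p(\R^3)} = \infty$ whenever $T^* < \infty$.

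Finally, I would apply Proposition \ref{singularityforms} to $\tilde u$: since $\tilde T^* = T^* - t_0 < \infty$, the solution $\tilde u$ has a singular point at time $\tilde T^*$, which translates into $u$ having a singular point at time $T^*(u_0)$, as required. There is essentially no hard step here: the whole argument is a reduction via the smoothing effect of the mild solution flow, coupled with the already-established $L^p$ blow-up characterization of Theorem \ref{mildexist}(i) and the $L^p$ singularity formation result of Proposition \ref{singularityforms}. The only mild subtlety to watch is to invoke the correct uniqueness class so that the solution $\tilde u$ constructed by time-translation coincides with the unique $L^p$-continuous mild solution governed by Proposition \ref{singularityforms}.
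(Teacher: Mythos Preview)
Your proposal is correct and matches the paper's intended argument: the paper gives no explicit proof for this corollary, treating it as an immediate consequence of Proposition~\ref{singularityforms} via exactly the reduction you describe (use the smoothing \eqref{eq:class2} to pick up $L^p$ data at any positive time, then invoke characterization~(i) of Theorem~\ref{mildexist} to identify the shifted maximal time). Your handling of the identification $\tilde T^* = T^* - t_0$ and the uniqueness point is appropriate.
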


Lastly, we record an existence theorem for mild solutions with initial data in subcritical Besov spaces. Similar results concerning smoothness of solutions belonging to the critical space $L^5(Q_T)$ were proven in \cite{dongdu}.

\begin{thm}[Mild solutions in subcritical Besov spaces]\label{subcrittheory}
	Let $3 < p,q \leq \infty$, $0 < \varepsilon < -s_p$, $s := s_p + \varepsilon$, and $M > 0$. There exists a positive constant $\kappa := \kappa(s,p)$
	such that for all divergence-free vector fields $u_0 \in \dot B^{s}_{p,q}(\R^3)$ with $\norm{u_0}_{\dot B^{s}_{p,q}(\R^3)} \leq M$, there exists a unique mild solution $u \in \cK^s_{p}(Q_{T})$ of the Navier-Stokes equations on $Q_{T}$ with initial data $u_0$ and
%	\begin{equation}
	$T^{\frac{\varepsilon}{2}} := \kappa/M$.
%		\label{}
%	\end{equation}
	Moreover, the solution satisfies
	\begin{equation}
	\p_t^k \nabla^l u \in C((0,T];L^p(\R^3)) \cap C((0,T];L^\infty(\R^3)),
		\label{}
	\end{equation}
	\begin{equation}\label{eq:subcritsatisfy}
		\norm{t^{k+\frac{l}{2}} \p_t^k \nabla^l u}_{\cK^{s}_{p}(Q_T)} + \norm{t^{k+\frac{l}{2}} \p_t^k \nabla^l u}_{\cK^{-1+\varepsilon}_{\infty}(Q_{T})} \leq c(s,p,k,l) M, %T^{\frac{1}{2}(s_p-s)},
	\end{equation}
	for all integers $k, l \geq 0$. Hence, $u$ is smooth in $Q_{T}$.
\end{thm}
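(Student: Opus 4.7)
The plan is to apply the abstract Picard Lemma \ref{abstract} to the integral formulation $u = e^{t\Delta} u_0 - B(u,u)$ in the Banach space $X_T := \cK^s_p(Q_T)$, with linear operator $L = 0$. Two ingredients are required: a bound on the linear part $e^{t\Delta} u_0$ in $X_T$, and a bilinear estimate on $B$ with a small prefactor in $T$. For the linear part, the caloric characterization \eqref{katoendup} of homogeneous Besov spaces yields
\begin{equation*}
\norm{e^{t\Delta} u_0}_{\cK^s_p(Q_\infty)} \les \norm{u_0}_{\dot B^s_{p,\infty}(\R^3)} \les \norm{u_0}_{\dot B^s_{p,q}(\R^3)} \leq M.
\end{equation*}

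For the bilinear part, I observe that if $u, v \in \cK^s_p(Q_T)$, then H\"older's inequality gives $u \otimes v \in \cK^{2s}_{p/2}(Q_T)$. Now I apply Lemma \ref{katoest} with $p_1 = p/2$, $s_1 = 2s$, and $p_2 = p$: the scaling identity \eqref{eq:Oseenscaling} forces $s_2 = 1 + 2s - 3/p = s + \varepsilon$, so $B(u,v) \in \cK^{s+\varepsilon}_p(Q_T)$. The restrictions \eqref{eq:Oseenconds} reduce to $3/p < 1$ (immediate from $p > 3$) and $2s > -2$ (holds because $s > s_p > -1$). The extra $\varepsilon$ of time regularity translates directly into a smallness factor:
\begin{equation*}
\norm{B(u,v)}_{\cK^s_p(Q_T)} \leq T^{\varepsilon/2} \norm{B(u,v)}_{\cK^{s+\varepsilon}_p(Q_T)} \leq C(s,p) \, T^{\varepsilon/2} \norm{u}_{\cK^s_p(Q_T)} \norm{v}_{\cK^s_p(Q_T)}.
\end{equation*}
Lemma \ref{abstract} then produces a unique mild solution $u \in \cK^s_p(Q_T)$ provided $M \cdot T^{\varepsilon/2} < \kappa$ for a suitable constant $\kappa := \kappa(s,p) > 0$, which matches the claimed time of existence.

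For the higher regularity \eqref{eq:subcritsatisfy}, I apply Lemma \ref{propagation} with $E := \cK^s_p(Q_T) \cap \cK^{-1+\varepsilon}_\infty(Q_T)$. The Besov embedding $\dot B^s_{p,q}(\R^3) \into \dot B^{-1+\varepsilon}_{\infty,q}(\R^3)$, valid because the index $s - 3/p = -1 + \varepsilon$ is preserved, combined with \eqref{katoendup} gives $e^{t\Delta} u_0 \in \cK^{-1+\varepsilon}_\infty$ with norm $\les M$. The required mixed bilinear estimate follows from H\"older: for $u \in \cK^s_p$ and $v \in \cK^{-1+\varepsilon}_\infty$, one has $u \otimes v \in \cK^{s-1+\varepsilon}_p$, and Lemma \ref{katoest} with $p_1 = p$, $s_1 = s - 1 + \varepsilon$, $p_2 = \infty$ gives $B(u,v) \in \cK^{-1+2\varepsilon}_\infty(Q_T)$, which embeds into $\cK^{-1+\varepsilon}_\infty(Q_T)$ with a prefactor $T^{\varepsilon/2}$. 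For the spacetime derivatives $\p_t^k \nabla^l u$, I differentiate the integral equation and apply the weighted version of Lemma \ref{katoest}: each spatial derivative costs $t^{1/2}$, each time derivative costs $t$, and the Leibniz rule distributes derivatives of $u \otimes u$ over already-controlled lower-order derivatives. Continuity of each $\p_t^k \nabla^l u$ in $L^p(\R^3) \cap L^\infty(\R^3)$ follows from the continuity built into Lemma \ref{katoest}, and smoothness of $u$ in $Q_T$ is immediate once all mixed derivatives are controlled.

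The main technical obstacle is the derivative bootstrap: every derivative introduces new product terms, and the weighted norms carrying factors of $t^{k + l/2}$ must be tracked carefully so that the bilinear estimate still closes with a positive power of $T$ available to absorb constants. Once the subcritical bilinear estimate above is in hand, however, this reduces to systematic bookkeeping using the Leibniz rule and induction on the total order $k + l$ of the derivative; no new analytical idea is required beyond those already present in the weighted heat estimates of Lemma \ref{katoest}.
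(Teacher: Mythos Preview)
Your argument for basic existence in $\cK^s_p(Q_T)$ is exactly the paper's: the bilinear estimate $\norm{B(u,v)}_{\cK^s_p(Q_T)} \leq C(s,p)\,T^{\varepsilon/2}\norm{u}_{\cK^s_p}\norm{v}_{\cK^s_p}$ via Lemma~\ref{katoest}, combined with the caloric bound on $e^{t\Delta}u_0$, feeds directly into Lemma~\ref{abstract} and yields the stated time $T^{\varepsilon/2}=\kappa/M$. Your route to the $\cK^{-1+\varepsilon}_\infty$ estimate via Lemma~\ref{propagation} is also correct and arguably cleaner than what the paper sketches.

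Where you diverge from the paper is in the higher-derivative bootstrap, and here your sketch hides a genuine subtlety. When you pass to the space carrying the weighted norms $\norm{t^{k+l/2}\partial_t^k\nabla^l\,\cdot\,}_{\cK^s_p}$, the bilinear constant $\eta$ in Lemma~\ref{propagation} now depends on $k,l$ (through the Leibniz expansion and the constants $C(k,l,s_1,p_1,p_2)$ in Lemma~\ref{katoest}). The hypothesis \eqref{eq:constantscond} of Lemma~\ref{propagation} then forces you to enlarge $\gamma$, which in turn tightens the smallness condition \eqref{eq:smallnesscond} and shortens the admissible time interval to some $T_k<T$. Your claim that ``a positive power of $T$ is available to absorb constants'' is not quite right: on the original interval $[0,T]$ one has $C(s,p)T^{\varepsilon/2}\norm{u}_{\cK^s_p}\approx 1/2$ already, with no room to spare for the larger constants at higher order.

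The paper handles this by a two-step procedure: first re-solve in the enriched space on the shorter interval $[0,T_k]$, obtaining $u\in C((0,T_k];W^{2k,p})$; then use that for $t\geq T_k/2$ the data $u(\cdot,t)$ lies in the \emph{subcritical} space $L^p$ with uniformly bounded norm, and continue the $W^{2k,p}$ regularity forward to time $T$ by local well-posedness in $L^\infty_t W^{2k,p}_x$ with existence time depending only on $\norm{u(\cdot,t_0)}_{L^p}$. A finite covering of $[T_k,T]$ then gives the estimate on the full interval. Your inductive scheme can be made to work, but it needs this continuation step (or an equivalent device) to reach all of $Q_T$; as written, the bookkeeping you defer is not purely cosmetic.
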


\subsection{$\varepsilon$-regularity and backward uniqueness}

In this section, we record a number of important theorems relevant for the blow-up procedure in Theorem \ref{main}. Our primary reference is the seminal paper of Escauriaza, Seregin, and {\u S}ver{\'a}k \cite{sverak03}. 

We will now introduce the relevant notation and definitions. For $R > 0$ and $z_0 = (x_0,t_0) \in \R^{3+1}$, we define the Euclidean balls
\begin{equation}
	B(x_0,R) := \{ y \in \R^3 : |x_0-y| < R \}, \quad B(R) := B(0,R),
	\label{}
\end{equation}
and the parabolic balls
\begin{equation}
	Q(z_0,R) := B(x_0,R) \times (t_0-R^2,t_0), \quad Q(R) := Q(0,R).
	\label{}
\end{equation}

Suppose that $u \: Q(z_0,R) \to \R^3$ is a measurable function. We will say that $z_0$ is a \emph{singular point} of $u$ if for all $0 < r < R$, $u \not\in L^\infty(Q(z_0,r))$. In this case, we will say that $u$ is \emph{singular} at $z_0$. Otherwise, we will say that $z_0$ is a \emph{regular point} of $u$.

We say that $(v,q)$ is a \emph{suitable weak solution} of the Navier-Stokes equations in $Q(z_0,R)$ if the following requirements are satisfied:
\begin{enumerate}[(i)]
	\item $v \in L^\infty_t L^2_x \cap L^2_t H^1_x(Q(z_0,R))$ and $q \in L^{3/2}_{t,x}(Q(z_0,R))$,
	\item $(v,q)$ solves the Navier-Stokes equations in the sense of distributions on $Q(z_0,R)$,
	\item for all $0 \leq \varphi \in C^\infty_0(B(R) \times (t_0-R^2,t_0])$, $(v,q)$ satisfies the local energy inequality
		\begin{equation}\label{sveraksuitable}
			\int_{B(x_0,R)} \varphi |v(x,t)|^2 \, dx + 2 \int_{t_0-R^2}^t \int_{B(x_0,R)} \varphi |\nabla v|^2 \, dx \, dt' \leq $$ $$ \leq \int_{t_0-R^2}^t \int_{B(x_0,R)} (|v|^2 (\Delta \varphi + \p_t \varphi) + v \cdot \nabla\varphi (|v|^2 + 2q)) \, dx \, dt',
		\end{equation}
	for every $t \in (t_0-R^2,t_0]$.
\end{enumerate}
We say that $(v,q)$ is a suitable weak solution of the Navier-Stokes equations in an open set $\Omega \subset \R^{3+1}$ if $(v,q)$ is a suitable weak solution in each parabolic ball $Q \subset \Omega$.

Note that we permit the test functions $\varphi$ in the local energy inequality to be supported near $t_0$, but this is not strictly necessary. Also, one may use the Navier-Stokes equations to demonstrate that a suitable weak solution $u(\cdot,t)$ is weakly continuous on $[t_0-R^2,t_0]$ as a function with values in $L^2(B(x,R))$.

Let us show that the distributional local energy inequality \eqref{localenergyineq} in the definition of Calder{\'o}n solution implies the local energy inequality \eqref{sveraksuitable} in the definition of suitable weak solution. Let $0 \leq \eta \in C^\infty_0(\R)$ such that $\eta \equiv 1$ when $|\tau| \leq 1/4$, $\eta \equiv 0$ when $|\tau| \geq 1/2$, and $\int_\R \eta \, d\tau = 1$. For all $\varepsilon > 0$, define $\eta_\varepsilon(\tau) := \varepsilon^{-1} \eta (\varepsilon^{-1} \tau)$. Now let $0 \leq \varphi \in C^\infty_0(B(R) \times (t_0-R^2,t_0])$. Define
\begin{equation}
	\Phi_{t,\varepsilon}(x,t') := \varphi(x,t') \big(1-\int_{\infty}^{t'} \eta_\varepsilon(\tau-t) \, d\tau \big), \quad t \in (t_0-R^2,t_0), \; \varepsilon > 0.
	\label{}
\end{equation}
Using $\Phi_{t,\varepsilon}$ as a test function in \eqref{suitability} and passing to the limit $\varepsilon \dto 0$ proves \eqref{sveraksuitable} for almost every $t \in (t_0-R^2,t_0)$. That the inequality is true for all $t \in (t_0-R^2,t_0]$ follows from the weak continuity of $u(\cdot,t) \in L^2(B(x,R))$.

We will now state the Caffarelli-Kohn-Nirenberg $\varepsilon$-regularity criterion for suitable weak solutions, see \cite{CKN,lin,CKNSeregin,sverak03}.
\begin{thm}[$\varepsilon$-regularity criterion \cite{sverak03}]\label{ckncrit}
		There exist absolute constants $\varepsilon_0 > 0$ and $c_{0,k} > 0$, $k \in \N$, with the following property. Assume $(v,q)$ is a suitable weak solution on $Q(1)$ satisfying
		\begin{equation}
		\int_{Q(1)} |v|^3 + |q|^{3/2} \, dx \, dt < \varepsilon_0.
	\end{equation}
	Then, for each $k \in \N$, $\nabla^{k-1} v$ is H{\"o}lder continuous on $\overline{Q(1/2)}$ and satisfies
	\begin{equation}
		\sup_{Q(1/2)} |\nabla^{k-1} v(x,t)| < c_{0,k}.
	\end{equation}
\end{thm}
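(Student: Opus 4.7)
The strategy is to follow the Caffarelli-Kohn-Nirenberg scheme, combining the local energy inequality with an iteration on scale-invariant quantities. I would introduce the dimensionless densities associated to $(v,q)$ on $Q(z_0,r)$,
\begin{equation}
A(r) := \sup_{t_0-r^2 < t < t_0} \frac{1}{r} \int_{B(x_0,r)} |v|^2 \, dx, \quad E(r) := \frac{1}{r} \int_{Q(z_0,r)} |\nabla v|^2,
\end{equation}
\begin{equation}
C(r) := \frac{1}{r^2} \int_{Q(z_0,r)} |v|^3, \quad D(r) := \frac{1}{r^2} \int_{Q(z_0,r)} |q|^{3/2},
\end{equation}
each invariant under the Navier-Stokes rescaling. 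In these variables the hypothesis reads $C(1) + D(1) < \varepsilon_0$, and it suffices to upgrade this smallness into pointwise control of $v$ and its derivatives on $Q(z_0,1/2)$.

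The core of the argument is a one-step decay estimate: there exist absolute constants $\theta \in (0,1/2)$ and $\gamma \in (0,1)$ such that $C(r) + D(r) \leq \varepsilon_0$ implies $C(\theta r) + D(\theta r) \leq \gamma (C(r) + D(r))$. I would prove this in three substeps. (i) Apply \eqref{sveraksuitable} with a cutoff equal to $1$ on $Q(z_0,r/2)$ and supported in $Q(z_0,r)$; H\"older gives $A(r/2) + E(r/2) \lesssim C(r)^{2/3} + C(r) + C(r)^{1/3} D(r)^{2/3}$. (ii) Combine the interpolation $\norm{v}_{L^3(Q(z_0,\theta r))}^3 \lesssim \norm{v}_{L^\infty_t L^2_x}^{3/2} \norm{v}_{L^2_t L^6_x}^{3/2}$ with the Sobolev embedding on balls to get $C(\theta r) \lesssim \theta^{-\alpha} (A(r/2) + E(r/2))^{3/2}$ for some $\alpha > 0$. (iii) Split the pressure as $q = q_1 + q_2$ on $B(x_0,r/2)$, where $q_1$ is the Calder\'on-Zygmund solution of $-\Delta q_1 = \partial_i\partial_j(\chi\, v_i v_j)$ for a smooth cutoff $\chi$ supported in $B(x_0,3r/4)$, and $q_2 := q - q_1$ is harmonic on $B(x_0,r/4)$; Calder\'on-Zygmund bounds give $\norm{q_1}_{L^{3/2}}^{3/2} \lesssim \norm{v}_{L^3}^3$, while the mean-value property yields $(\theta r)^{-2} \int_{Q(z_0,\theta r)} |q_2|^{3/2} \lesssim \theta^\beta D(r)$ for some $\beta > 0$. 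Choosing $\theta$ small enough then forces $\gamma < 1$.

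Once the one-step decay is established, iteration gives $C(\theta^k) + D(\theta^k) \leq \gamma^k \varepsilon_0$, i.e., Morrey-type decay of the local $L^3$ norm of $v$ around $z_0$, and a routine covering argument upgrades this to uniform decay for centers in a parabolic neighborhood of $z_0$. The Campanato characterization then yields H\"older continuity of $v$ on $\overline{Q(z_0, 1/2)}$, and in particular $v \in L^\infty(Q(z_0,1/2))$. The derivative bounds $\norm{\nabla^{k-1} v}_{L^\infty(Q(z_0,1/2))} \leq c_{0,k}$ follow by bootstrapping with classical linear parabolic Schauder and $L^p$ theory applied to $\partial_t v - \Delta v = -\nabla q - v \cdot \nabla v$ on slightly larger cylinders, since $v \cdot \nabla v$ and $\nabla q$ are now lower-order perturbations controlled by the regularity already gained.

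The main obstacle is the pressure split in substep (iii). The Navier-Stokes pressure is determined only globally, so a priori all of $q$ could be a large harmonic function that does not gain any decay under zoom-in. The decomposition $q = q_1 + q_2$ sacrifices a local Calder\'on-Zygmund term, quadratic in $v$ and hence controlled by the hypothesized smallness, in order to isolate a harmonic remainder whose decay at smaller scales comes for free from the mean-value property. Arranging the exponents so that the harmonic piece genuinely gains a positive power of $\theta$ while the local piece remains controlled by $C(r) + D(r)$ is the single place where the nonlocality of the Navier-Stokes equations obstructs a purely local argument; the remaining steps are standard parabolic energy, interpolation, and bootstrap.
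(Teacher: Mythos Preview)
The paper does not prove this theorem; it is quoted from the literature with the citation \cite{CKN,lin,CKNSeregin,sverak03} and no argument is given. Your outline is a faithful sketch of the standard Caffarelli--Kohn--Nirenberg/Lin scheme found in those references: scale-invariant quantities, the local energy inequality, interpolation, the pressure split into a Calder\'on--Zygmund piece and a harmonic remainder, iterated decay, and linear parabolic bootstrap for the higher derivatives.

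One small caveat: as written, your step (ii) alone does not close the iteration, since $C(\theta r)\lesssim\theta^{-\alpha}(A+E)^{3/2}$ combined with $A+E\lesssim C^{2/3}+\cdots$ only returns $C(\theta r)\lesssim\theta^{-\alpha}C(r)$. In the actual argument one subtracts the spatial mean of $v$ (or uses a Poincar\'e-type inequality) so that the interpolation produces a term with a \emph{positive} power of $\theta$, e.g.\ $C(\theta r)\lesssim \theta\, C(r)+\theta^{-\alpha}A(r)^{3/4}E(r)^{3/4}$; it is this structure, together with the harmonic gain in the pressure, that makes $\gamma<1$ achievable. This is a routine refinement in the cited proofs, but worth flagging since it is precisely where the exponents must be arranged carefully.
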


The $\varepsilon$-regularity criterion may be used to prove that singularities of suitable weak solutions persist under locally strong limits.
\begin{prop}[Persistence of singularity \cite{rusin}]\label{persist}
	Let $(v_k,q_k)$ be a sequence of suitable weak solutions on $Q(1)$ such that $v_k \to v$ in $L^3(Q(1))$, $q_k \wto q$ in $L^{3/2}(Q(1))$. Assume $v_k$ is singular at $(x_k,t_k) \in \overline{Q(1)}$ and $(x_k,t_k) \to 0$. Then $v$ is singular at the spacetime origin.
\end{prop}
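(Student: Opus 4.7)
The plan is to argue by contradiction: suppose $v$ has no singular point at the origin, so that there exist $r_0 \in (0,1)$ and $M > 0$ with $|v| \leq M$ on $Q(r_0)$. The goal is then to show that for some $r_1 > 0$ chosen sufficiently small and all $k$ large enough,
\[
\frac{1}{r_1^2} \int_{Q((x_k,t_k), r_1)} |v_k|^3 + |q_k|^{3/2} \, dx\, dt < \varepsilon_0,
\]
where $\varepsilon_0$ is the absolute constant from Theorem \ref{ckncrit}. Applying the scaling- and translation-invariant form of the $\varepsilon$-regularity criterion to $(v_k, q_k)$ would then force $(x_k, t_k)$ to be a regular point of $v_k$ for $k$ large, contradicting the hypothesis.

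The velocity contribution is immediate: since $v_k \to v$ in $L^3(Q(1))$, $(x_k, t_k) \to 0$, and $|v| \leq M$ on $Q(r_0)$, one has, for $r_1 < r_0/2$ and $k$ large, $Q((x_k,t_k), r_1) \subset Q(r_0)$ and
\[
\int_{Q((x_k,t_k), r_1)} |v_k|^3 \leq 2 M^3 |Q(r_1)| + 2\|v_k - v\|_{L^3(Q(1))}^3 \leq C M^3 r_1^5 + o_k(1).
\]
The main obstacle is the pressure, because $q_k \wto q$ only weakly in $L^{3/2}$ and the boundedness of $v$ on $Q(r_0)$ gives no a priori pointwise bound on $q$. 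I would overcome this with a local pressure splitting: fix $\phi \in C^\infty_0(B(r_0/2))$ with $\phi \equiv 1$ on $B(r_0/4)$ and decompose $q_k = \tilde q_k + q_k^2$, where
\[
\tilde q_k(\cdot,t) := (-\Delta)^{-1}\div\div\bigl(\phi^2 v_k \otimes v_k\bigr)(\cdot,t).
\]
Since $(v_k, q_k)$ solves the Navier-Stokes equations on $Q(1)$, taking the divergence of the momentum equation yields $-\Delta q_k = \div\div(v_k \otimes v_k)$ distributionally on $Q(1)$; hence $q_k^2$ is spatially harmonic on $B(r_0/4)$ for each time.

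By Calder\'on-Zygmund applied to $\phi^2(v_k \otimes v_k - v \otimes v) \to 0$ in $L^{3/2}$, one gets $\tilde q_k \to \tilde q$ strongly in $L^{3/2}$; because $\phi^2 v \otimes v \in L^\infty$ has compact spatial support, Riesz boundedness places $\tilde q$ in $L^p$ for every $1 < p < \infty$, so H\"older's inequality gives $\int_{Q((x_k,t_k),r_1)} |\tilde q_k|^{3/2} \leq 2 C_p r_1^{5(1-1/p)} + o_k(1)$, with the exponent $5(1-1/p) > 2$ once $p > 5/3$. For the harmonic remainder, the mean value property gives $\|q_k^2(\cdot,t)\|_{L^\infty(B(r_0/8))} \leq C r_0^{-2}\|q_k^2(\cdot,t)\|_{L^{3/2}(B(r_0/4))}$; combined with the uniform $L^{3/2}$-bounds on $q_k$ (weakly convergent, hence norm-bounded) and on $\tilde q_k$ (via Calder\'on-Zygmund), this yields $\int_{Q((x_k,t_k), r_1)} |q_k^2|^{3/2} \leq C r_1^3$ uniformly in $k$. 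Putting the three contributions together, fixing $r_1$ small enough to make the sum of $r_1$-dependent terms divided by $r_1^2$ less than $\varepsilon_0/2$, and then choosing $k$ large to absorb the $o_k(1)$ errors, contradicts the contrapositive of Theorem \ref{ckncrit} and completes the proof. The crux is the pressure splitting, which substitutes for the missing strong $L^{3/2}$ convergence of $q_k$; everything else is routine rescaling together with the $\varepsilon$-regularity criterion.
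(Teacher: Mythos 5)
The paper does not actually prove this proposition; it is quoted from Rusin--\v{S}ver\'ak \cite{rusin}, so there is no in-paper argument to compare against. Your proposal correctly reconstructs the standard proof behind the cited result: the only real obstacle is that $q_k \wto q$ merely weakly in $L^{3/2}$, and your local splitting $q_k = \tilde q_k + q_k^2$ with $\tilde q_k = (-\Delta)^{-1}\div\div(\phi^2 v_k\otimes v_k)$ (strongly convergent by Calder\'on--Zygmund, since $v_k\otimes v_k \to v\otimes v$ in $L^{3/2}$) plus a spatially harmonic remainder (controlled in $L^\infty(B(r_0/8))$ by its $L^{3/2}(B(r_0/4))$ norm, which is bounded uniformly in $k$) is exactly the device that compensates for this, after which the scaled $\varepsilon$-regularity criterion applied on $Q((x_k,t_k),r_1)$ gives the contradiction. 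Two cosmetic corrections: H\"older in space gives $\int_{Q((x_k,t_k),r_1)}|\tilde q|^{3/2} \lesssim r_1^{2+3(1-\frac{3}{2p})}$ (equivalently the exponent $5(1-\frac{3}{2p})$, not $5(1-\frac1p)$), so one should take, say, $p>5/2$ --- harmless, since $\|\tilde q(\cdot,t)\|_{L^p(\R^3)} \leq C_p M^2$ uniformly for the relevant times $t\in(-r_0^2,0)$; note that $\phi^2 v\otimes v$ is bounded only for those times, not on all of $(-1,0)$, which is all your estimate uses. Also the elementary inequality costs a factor $4$ rather than $2$ in the velocity term. Neither affects the argument.
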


Finally, we recall two theorems concerning backward uniqueness and unique continuation of solutions to differential inequalities.
\begin{thm}[Backward uniqueness \cite{sverak03}]\label{backuniqueness}
	Let $Q_+ := \R^3_+ \times (0,1)$. Suppose $u \: Q_+ \to \R^3$ satisfies the following conditions:
	\begin{enumerate}[(i)]
		\item $|u_t + \Delta u| \leq c (|\nabla u| + |u|)$ a.e. in $Q_+$ for some $c > 0$,
		\item $u(\cdot,0) = 0$,
		\item $|u(x,t)| \leq e^{M|x|^2}$ in $Q_+$, and
		\item $u, u_t, \nabla^2 u \in L^2_t (L^2_\loc)_x(Q_+)$.
	\end{enumerate}
	Then $u \equiv 0$ on $Q_+$.
\end{thm}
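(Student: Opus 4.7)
The plan is to follow the Carleman-inequality strategy of Escauriaza, Seregin, and Šverák \cite{sverak03}. The key point is that (i) turns $u_t + \Delta u$ into a controlled error, while (iii) is the growth bound that rules out Tychonov-type non-uniqueness. Changing the time variable $t \mapsto 1-t$ reframes the problem as asking whether the parabolic differential inequality, posed on the half-space with no lateral boundary condition and with a zero ``final value'' (originally $u(\cdot,0)=0$), forces $u\equiv 0$ despite the absence of compact support. Assume for contradiction that $u \not\equiv 0$ in $Q_+$. After a density/mollification argument one can assume $u$ is as smooth as needed and vanishes suitably at $t=0$.

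The engine of the proof is two Carleman inequalities. First, a \emph{Gaussian-weight} Carleman inequality
\begin{equation}
\int_{Q_+} t^{-2a}\, e^{-|x|^2/(4t)}\bigl(a^{-1}|u|^2 + t|\nabla u|^2\bigr) \,dx\,dt \leq C \int_{Q_+} t^{-2a+2}\, e^{-|x|^2/(4t)} |u_t+\Delta u|^2 \,dx\,dt,
\end{equation}
valid (after cutoffs) for $a \gg 1$ and for functions $u$ vanishing near $t=0$ with appropriate integrability. The weight $t^{-2a}e^{-|x|^2/(4t)}$ is the natural one for the backward heat operator: one computes the conjugated operator $e^{-\phi}(\p_t + \Delta)e^{\phi}$, splits it into symmetric and antisymmetric parts, and the commutator gives the positive lower bound. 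Using (i) to absorb the right-hand side into the left for $a$ sufficiently large, one obtains vanishing of $u$ in the appropriate parabolic region. A second Carleman inequality of a different character (based on a time-logarithmic weight adapted to the half-space, so that no boundary term on $\{x_3 = 0\}$ enters after integration by parts) is then used to propagate the vanishing: it gives $u \equiv 0$ in a parabolic strip near the boundary, and then in successively larger regions.

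I would carry out the steps in this order: (1) reduce by standard approximation to the setting where $u$ is smooth, has compact support in $x_3$, and can be multiplied by cutoffs in $x$ without losing the hypotheses; (2) establish the Gaussian-weight Carleman inequality by the conjugation and commutator computation, verifying that the boundary terms on $\{x_3=0\}$ and at spatial infinity vanish thanks to (iii)--(iv) and the $e^{-|x|^2/(4t)}$ factor; (3) apply this inequality to $\chi u$ for a spatial cutoff $\chi$ equal to $1$ on a ball and use (i) to absorb the lower-order terms, obtaining vanishing of $u$ on a fixed parabolic neighborhood of the origin; (4) iterate by translating in $x'$ (the tangential direction) and rescaling in $x_3$ to fill out the half-space, using the second Carleman inequality when necessary to cross levels $\{x_3 = \text{const}\}$.

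The hard part is controlling the boundary contributions in step (2) without any Dirichlet or Neumann data on $\{x_3=0\}$. This is where the specific choice of Carleman weight matters: the weight must be monotone in $x_3$ in such a way that the boundary integrals obtained after integration by parts have a favorable sign (or vanish) once combined with the structure of $(\p_t+\Delta)$. The growth bound (iii) is essential to justify that all integrals over $\{|x|=R\}$ tend to zero as $R \to \infty$, since the Gaussian factor $e^{-|x|^2/(4t)}$ dominates $e^{M|x|^2}$ only for $t < 1/(4M)$; one therefore first proves the statement on the short time interval $t\in(0,1/(4M))$ and then iterates in time to fill $(0,1)$. The details of the two Carleman inequalities and the iteration are carried out in \cite{sverak03}, to which we refer the reader.
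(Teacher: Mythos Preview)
The paper does not prove this theorem at all: it is stated in the appendix as a quoted result from Escauriaza--Seregin--{\u S}ver{\'a}k \cite{sverak03}, with no argument given. So there is no ``paper's own proof'' to compare against; the author simply imports the theorem as a black box for use in Section~3.

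Your sketch is a reasonable summary of the actual proof in \cite{sverak03}, which is indeed based on two Carleman inequalities and does not require any boundary condition on $\{x_3=0\}$. A couple of minor corrections to your outline: the two Carleman inequalities in \cite{sverak03} are not quite organized as ``Gaussian weight then log-in-time weight''; rather, one inequality (with an anisotropic weight depending on $x_3$) yields a lower bound of doubling type showing that $u$ decays like $e^{-\delta x_3^2}$ in a strip, and the second (with a weight $e^{2\alpha x_3}$) upgrades this decay to actual vanishing. The iteration you describe in step (4) is not by translation in the tangential variables but by peeling off strips in the $x_3$ direction. Also, the reduction in your step (1) to compactly supported $u$ in $x_3$ is not part of the argument and would defeat the purpose; the whole point is to handle functions with no control at $\{x_3=0\}$, and the Carleman weights are chosen so that the relevant boundary terms have the right sign or are suppressed. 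If you want to include a proof sketch here, it would be more accurate to refer directly to Theorems~5.1 and~5.2 (the two Carleman estimates) of \cite{sverak03} and the short argument combining them in Section~4 of that paper.
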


To make sense of condition (ii), one may use that $u \in C([0,1];\cD'(\R^3_+))$, due to condition (iv).
 
\begin{thm}[Unique continuation \cite{sverak03}]\label{continuation}
	Let $R, \, T > 0$ and $Q(R,T) := B(R) \times (0,T) \subset \R^{3+1}$. Suppose $u \: Q(R,T) \to \R^3$ satisfies the following conditions:
	\begin{enumerate}[(i)]
		\item $u, u_t, \nabla^2 u \in L^2(Q(R,T))$
		\item $|u_t + \Delta u| \leq c (|\nabla u| + |u|)$ a.e. in $Q(R,T)$ for some $c > 0$
		\item $|u(x,t)| \leq C_k (|x| + \sqrt{t})^k$ in $Q(R,T)$ for some $C_k > 0$, all $k \geq 0$
	\end{enumerate}
	Then $u(x,0) = 0$ for all $x \in B(R)$.
\end{thm}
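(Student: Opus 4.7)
The plan is to prove this via a parabolic Carleman estimate adapted to the heat operator $H := \p_t + \Delta$. My target is an inequality
\[
\tau \int_{Q(r)} e^{2\tau \phi(x,t)} |v|^2 \, dx \, dt \leq C \int_{Q(r)} e^{2\tau \phi(x,t)} |Hv|^2 \, dx \, dt
\]
valid for all $v \in C^\infty_0(Q(r))$ and $\tau \gg 1$, with $\phi$ chosen so that its level sets respect the parabolic scaling $(x,t) \mapsto (\lambda x, \lambda^2 t)$ near the corner $(0,0)$ --- for example $\phi(x,t) = (|x|^2 + t)^{-\alpha}$ for a suitable $\alpha>0$, or a logarithmic variant. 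The derivation is classical: conjugate $H$ by $e^{\tau\phi}$, split $e^{\tau\phi} H e^{-\tau\phi}$ into its symmetric and skew-symmetric parts on $L^2(Q(r))$, expand the commutator, and use a pseudoconvexity property of $\phi$ with respect to $H$ to extract positivity. A stronger version of the estimate also controls $\tau^{1/2}\int e^{2\tau\phi}|\nabla v|^2$, which will be needed to absorb gradient terms.

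Given $u$ satisfying (i)--(iii), the first step is to conclude $u \equiv 0$ in a small parabolic neighborhood of the origin. Fix a cutoff $\chi \in C^\infty_0(Q(r))$ equal to $1$ on $Q(r/2)$ and apply the Carleman estimate to a smooth approximation of $v := \chi u$. The differential inequality (ii) lets one bound $|\chi H u| \leq c \chi(|\nabla u| + |u|)$; these pieces are absorbed into the left-hand side once $\tau$ is large. The commutator $[H,\chi]u$ is supported on the shell $Q(r)\setminus Q(r/2)$, where $\phi$ is strictly less than on $Q(r/2)$, so this contribution is of order $e^{2\tau c_0}$ for some $c_0 < \inf_{Q(r/2)}\phi$. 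Meanwhile, the infinite-order vanishing (iii), matched to the parabolic growth of $\phi$ at the origin, ensures that $e^{2\tau\phi}|u|^2$ is integrable for every $\tau$. Letting $\tau \to \infty$ forces $u \equiv 0$ on $Q(r/2)$.

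Once $u$ vanishes on an open parabolic neighborhood of $(0,0)$, I propagate the zero set to all of $\overline{B(R)}\times\{0\}$ by a covering argument. Given $x_0 \in B(R)$, connect $0$ to $x_0$ by a short chain of overlapping balls inside $B(R)$. On each ball $u$ already vanishes identically near $t=0$ from the previous step, so at any interior point of that ball at $t=0$ the function $u$ vanishes to infinite order in the parabolic sense. Re-centering the Carleman argument at such a point extends the vanishing to a slightly larger parabolic region. Finitely many iterations cover $B(R)$ and yield $u(\cdot,0)\equiv 0$ there.

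The main obstacle is selecting the Carleman weight $\phi$ so that three competing requirements hold simultaneously: pseudoconvexity with respect to $H = \p_t + \Delta$ throughout $Q(r)$; sufficient blow-up at $(0,0)$ that (iii) still renders $e^{\tau\phi} u$ square-integrable for every $\tau$; and enough decay of $\phi$ on the cutoff shell that the commutator term is defeated as $\tau \to \infty$. The parabolic corner at $(0,0)$, sitting on the boundary $\{t=0\}$ where the differential inequality (ii) is posed only one-sidedly, makes this balancing act delicate; one typically introduces a two-parameter weight and optimizes at the end, and extends $u$ by zero to $t<0$ (justified by (iii) and the regularity in (i)) to reduce the argument to the interior Carleman estimate.
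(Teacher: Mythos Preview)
The paper does not prove this theorem at all: it is stated in the appendix (Section 4.4) as a known result quoted from Escauriaza--Seregin--\v{S}ver\'ak \cite{sverak03}, alongside the backward uniqueness theorem, with the remark ``we recall two theorems concerning backward uniqueness and unique continuation of solutions to differential inequalities.'' No argument is given or sketched.

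Your outline via a parabolic Carleman estimate with a weight singular at the corner $(0,0)$ is indeed the approach taken in the original reference \cite{sverak03} (see their Section 4 and the appendix on Carleman inequalities), so in spirit you are reproducing the proof from the cited source rather than departing from the present paper. The structure you describe---conjugation of $\partial_t+\Delta$ by $e^{\tau\phi}$, commutator positivity from pseudoconvexity, absorption of the lower-order terms from (ii) for large $\tau$, and use of the infinite-order vanishing (iii) to make $e^{\tau\phi}u$ integrable---is correct in outline. Two points worth tightening if you carry this out: first, the weight actually used in \cite{sverak03} is of the form $\sigma^{-a}(t)\exp(-|x|^2/(4t))$ (or a close variant), not $(|x|^2+t)^{-\alpha}$, and the precise choice matters for the pseudoconvexity computation; second, the propagation step at the end is unnecessary for the statement as written, since you only need $u(\cdot,0)=0$ on $B(R)$, and translating the Carleman argument to an arbitrary center $x_0\in B(R)$ (where (iii) still gives infinite-order vanishing at $(x_0,0)$ after noting $|x-x_0|+\sqrt{t}\ge c(|x|+\sqrt{t})$ fails in general, so one really does need to first get vanishing near the origin and then propagate, as you say).
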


\end{document}